\documentclass{amsart}

\usepackage{adjustbox}
\usepackage{amsaddr}
\usepackage{amsmath}
\usepackage{amssymb}
\usepackage{bm}
\usepackage{booktabs}
\usepackage{enumitem}
\usepackage[margin=1in]{geometry}
\usepackage{hyperref}
\usepackage{cleveref} % must be after hyperref
\usepackage{mathtools} % multlined
\usepackage{siunitx}
\usepackage{stmaryrd}
\usepackage{tikz}
\usepackage{tikz-cd}

\usetikzlibrary{calc}

\usepackage[giveninits,date=year,sortcites]{biblatex}
\newcolumntype{C}[1]{S[table-format=#1,table-number-alignment=center]}

\newcolumntype{?}{!{\vrule width 1pt}}

\theoremstyle{plain}
\newtheorem{proposition}{Proposition}
\newtheorem{lemma}{Lemma}
\newtheorem{corollary}{Corollary}
\newtheorem{theorem}{Theorem}
\newtheorem{definition}{Definition}
\newtheorem*{lemma*}{Lemma}

\theoremstyle{remark}
\newtheorem{remark}{Remark}

\newcommand{\Hdiv}{\bm{H}(\operatorname{div})}
\newcommand{\HdivOmega}{\bm{H}(\operatorname{div},\Omega)}

\DeclareMathOperator{\Curl}{curl}
\DeclareMathOperator{\Div}{div}

\newcommand{\Hcurl}{\bm{H}(\Curl)}
\newcommand{\HcurlOmega}{\bm{H}(\Curl,\Omega)}
\newcommand{\HcurlDomain}[1]{\bm{H}(\Curl,#1)}

\newcommand{\T}{\mathcal{T}}
\newcommand{\kk}{\kappa}

\DeclareMathOperator{\nnz}{nnz}

\newcommand{\Vh}{V_h}
\newcommand{\Wh}{\bm{W}_h}
\newcommand{\Zh}{Z_h}

\newcommand{\PP}{\mathbb{P}}
\newcommand{\Qq}{\mathcal{Q}}
\newcommand{\Pp}{\mathcal{P}}

\newcommand{\tr}{T}

\def\iii{\vert\kern-0.25ex\vert\kern-0.25ex\vert}

\DeclareMathAlphabet\mathbfcal{OMS}{cmsy}{b}{n}

\SetSymbolFont{stmry}{bold}{U}{stmry}{m}{n}

\newcommand{\Tri}{\Delta}
\newcommand{\Square}{\Xi}

\newcommand{\ii}{\overline{\imath}}
\newcommand{\jj}{\overline{\jmath}}

\begin{document}

\title[Finite element Duffy de Rham Complex]{The high-order finite element Duffy de Rham complex and low-order-refined preconditioning}
\author[W. Pazner]{Will Pazner}
\address{Fariborz Maseeh Department of Mathematics and Statistics, Portland State University, Portland, OR}

\begin{abstract}
   In this work, we construct high-order finite element spaces for the $L^2$ de Rham complex on triangular meshes amenable to low-order-refined preconditioning.
   The spaces are constructed using the Duffy transformation, by pulling back appropriately chosen polynomial spaces defined on the unit square;
   in addition to piecewise polynomials, these spaces also contain certain rational functions, and they reduce to the standard Lagrange, Nédélec, and discontinuous finite elements in the lowest-order case.
   We establish spectral equivalence, independent of the polynomial degree, of the stiffness matrices defined on these spaces with the lowest-order stiffness matrices defined on refined meshes, constructed using a Gauss--Lobatto triangular lattice.
   Spectral equivalence of the operators is a consequence of norm equivalences in Jacobi-weighted $L^2$ norms, which are established by proving stability of the Jacobi--Gauss--Lobatto interpolation operator in shifted norms.
   The low-order-refined preconditioners can also be used to precondition the standard piecewise polynomial finite element spaces using a fictitious space approach.
   The low-order-refined system can in turn be preconditioned effectively using algebraic multigrid methods.
   The analytical estimates are confirmed by numerical results on a variety of high-order problems, including on mixed meshes and surface meshes.
\end{abstract}

\maketitle

\section{Introduction}
\label{sec:intro}

This paper is concerned with the construction of high-order finite elements for the $L^2$ de Rham complex on triangular meshes that are amenable to low-order-refined preconditioning.
Low-order-refined preconditioning, also known as finite-element-method--spectral-element-method (FEM--SEM) preconditioning, is a technique where high-order discrete operators are shown to be spectrally equivalent to certain lowest-order operators, independent of the polynomial degree of the high-order operator, and hence preconditioners or solvers for the low-order system can be used as effective preconditioners for the high-order system;
this idea was first proposed by Orszag in 1980 in the context of spectral methods \cite{Orszag1980}.
Since then, there have been many further developments and improvements \cite{Canuto2010,Bello-Maldonado2019}, including application to the incompressible Navier--Stokes equations \cite{Fischer1997}, extension to discontinuous Galerkin methods \cite{Pazner2020a}, and application to interior penalty and saddle-point problems in $\Hdiv$ \cite{Pazner2025,Pazner2024}.
A systematic methodology for constructing spectrally equivalent discretizations for all spaces in the $L^2$ de Rham complex based on an interpolation--histopolation methodology was described in \cite{Pazner2023}.
The main advantage of low-order-refined preconditioning is that preconditioners for the high-order system can be constructed without ever assembling the associated system matrix.
Then, efficient matrix-free algorithms can be used to compute the action of the operator, resulting in significant asymptotic improvements to the computational complexity and memory requirements of the iterative solvers.

For the most part, the above-mentioned works are applicable only to discretizations defined on tensor-product elements, i.e.\ meshes of mapped quadrilaterals and hexahedra.
One notable exception is the work of Chalmers and Warburton \cite{Chalmers2018}, who considered an optimization procedure to choose the low-order node locations so as to minimize the condition number of the preconditioned system.
In contrast to the tensor-product case, in order to obtain a condition number that is bounded independent of the polynomial degree, the low-order discretization needed to be enriched with additional nodal points, leading to a non-identity transfer operator between the high-order and low-order spaces.

In this work, we take a somewhat different approach.
Instead of constructing low-order discretizations for the standard $\Pp_N$ finite elements (and the corresponding Nédélec and $L^2$ elements), we instead construct different elements, using the so-called Duffy transformation to map certain tensor-product elements to elements on the triangle.
Each resulting local element contains the complete polynomial space, giving standard best-approximation properties, but also certain rational functions.
A very similar construction was considered for $H^1$ elements in \cite{Chen2012}.
The approximation properties of rational functions obtained from polynomials under the Duffy transformation were analyzed in \cite{Shen2009}.
The Duffy transformation and Dubiner polynomials were also used in \cite{Sherwin1995,Karniadakis2005} to construct modal bases for $hp$ spectral element discretizations on simplex elements.

We also construct, using a unified methodology, finite element spaces for the $\Hcurl$ and $L^2$ spaces.
With a judicious choice of the degrees of freedom for these spaces, the discrete differential operators between these spaces are exactly the topological incidence matrices.
In the lowest-order case, these new elements coincide exactly with the standard lowest-order $\Pp_N$, Nédélec first-kind, and $L^2$ elements on the triangle.

The main result of this paper is that the system matrices for the high-order Laplacian, curl--curl operator, and $L^2$ mass matrix using the newly defined elements are spectrally equivalent, independent of the polynomial degree, to their lowest-order counterparts defined on a refined mesh obtained by superimposing a collapsed triangular lattice on each element.
The key technical contribution is the $H^1$ stability of Jacobi--Gauss--Lobatto interpolation operators in certain shifted norms, i.e.\ where the value of the Jacobi exponents of the interpolation points may differ from those of the norm considered.
Change of variables under the Duffy transformation gives rise to a Jacobi weighting, necessitating norm equivalence in weighted norms, proved using these stability results.

The structure of this paper is as follows.
In \Cref{sec:fem}, we define the finite element Duffy de Rham complex on triangular meshes, and construct unisolvent degrees of freedom for each of the elements.
In \Cref{sec:lor}, we construct low-order-refined preconditioners for the high-order elements, and prove spectral equivalence using a detailed analysis of polynomial interpolation in Jacobi-weighted Sobolev spaces.
Numerical results confirming the analysis are provided in \Cref{sec:numerical}.
Conclusions are given in \Cref{sec:conclusions}.

\begin{remark}[Notation]
   We will use the notation $a \lesssim b$ to express that $a \leq C b$, where $C$ is a constant independent of the mesh size $h$ and polynomial degree $N$.
   Similarly, $a \gtrsim b$ is used to mean $b \lesssim a$, and $a \approx b$ means that both $a \lesssim b$ and $b \lesssim a$.
   The standard norms and seminorms on the Sobolev space $H^k(D)$ will be denoted $\|\cdot\|_{k,D}$ and $|\cdot|_{k,D}$, respectively;
   when the subscript $D$ is omitted, the domain of integration will be assumed to be all of $\Omega$.

   Let $\PP^N$ denote the space of polynomials in one variable of degree at most $N$.
   Let $\Qq_{M,N}$ denote the space of bivariate polynomials of degree at most $M$ in the first variable, and at most $N$ in the second variable, and define $\Qq_N = \Qq_{N,N}$.
   Let $\Pp_N$ denote the space of bivariate polynomials of total degree at most $N$.
   For multi-index $\bm\alpha \in \mathbb{N}^d$ and $\bm x \in \mathbb{R}^d$, define $|\bm\alpha| = \sum_{i=1}^d \alpha_i$ and $\bm x^{\bm \alpha} := \prod_{i=1}^d x_i^{\alpha_i}$.
\end{remark}

\section{Finite element spaces}
\label{sec:fem}

We consider the two-dimensional $L^2$ de Rham complex
\begin{equation}
   \label{eq:de-rham}
   \begin{tikzcd}
      H^1(\Omega) \arrow[r, "\nabla"] & \HcurlOmega \arrow[r, "\Curl"] & L^2(\Omega),
   \end{tikzcd}
\end{equation}
and the rotated version
\begin{equation}
   \label{eq:de-rham-rotated}
   \begin{tikzcd}
      H^1(\Omega) \arrow[r, "\nabla^\perp"] & \HdivOmega \arrow[r, "\Div"] & L^2(\Omega).
   \end{tikzcd}
\end{equation}
Here, $\Curl \bm w = \partial_x w_2 - \partial_y w_1$ and $\nabla^\perp v = (-\partial_y v, \partial_x v)$.
For our purposes, \eqref{eq:de-rham} and \eqref{eq:de-rham-rotated} are largely interchangeable, and we will focus on \eqref{eq:de-rham}.

Given two domains $E$ and $F$, and a diffeomorphism $\theta : E \to F$, a function defined on $F$ is transformed to a function defined on $E$ through the pullback under $\theta$.
The pullbacks associated with the $H^1$, $\Hcurl$, and $L^2$ spaces are defined by
\begin{align}
   \theta^*_g v &= v \circ \theta \in H^1(E), \\
   \theta^*_c \bm w &= (D\theta)^\tr \bm w \circ \theta \in \HcurlDomain{E}, \\
   \theta^*_b z &= \det(D\theta) z \circ \theta \in L^2(E),
\end{align}
where $v \in H^1(F)$, $\bm w \in \HcurlDomain{F}$, and $z \in L^2(F)$.
In the above, the subscripts $g$, $c$, and $b$ stand for `gradient', `curl', and `broken', following the notation of \cite[Definition 9.8]{Ern2021}.
The pullbacks commute with the differential operator as summarized in the following proposition.

\begin{proposition}[Commutativity of differential and pullback]
   It holds that
   \begin{align}
      \label{eq:grad-commute}
      \nabla(\theta^*_g u) &= \theta^*_c (\nabla u), \\
      \label{eq:curl-commute}
      \Curl(\theta^*_c \bm v) &= \theta^*_b (\Curl \bm v).
   \end{align}
\end{proposition}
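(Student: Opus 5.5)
We first prove both identities for smooth $u$ and $\bm v$ by direct computation in coordinates. The general case then follows by density of smooth functions in $H^1(F)$ and $\HcurlDomain{F}$, since each pullback is a bounded map between the relevant spaces when $\theta$ is a diffeomorphism with bounded derivatives. Throughout, write $\bm x = \theta(\bm\xi)$ and $J = D\theta$, so that $J_{ij} = \partial \theta_i/\partial \xi_j$.

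For \eqref{eq:grad-commute} the chain rule suffices:
\[
   \partial_{\xi_j}(u\circ\theta) = \sum_i (\partial_{x_i} u)\circ\theta \, J_{ij}.
\]
In vector form this reads $\nabla_{\bm\xi}(u\circ\theta) = J^\tr (\nabla u)\circ\theta$, which is exactly $\theta^*_c(\nabla u)$. This step is routine.

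For \eqref{eq:curl-commute}, set $\bm w = \theta^*_c \bm v$, with components $w_j = \sum_i J_{ij}\, v_i\circ\theta$. Then
\[
   \Curl \bm w = \partial_{\xi_1} w_2 - \partial_{\xi_2} w_1 .
\]
Differentiating with the product rule gives two groups of terms.

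The first group contains the second derivatives of $\theta$:
\[
   \sum_i \bigl(\partial_{\xi_1} J_{i2} - \partial_{\xi_2} J_{i1}\bigr)\, v_i\circ\theta .
\]
Each bracket equals $\partial_{\xi_1}\partial_{\xi_2}\theta_i - \partial_{\xi_2}\partial_{\xi_1}\theta_i$, which vanishes by equality of mixed partials. This requires $\theta \in C^2$, which holds since $\theta$ is a diffeomorphism; in the paper's applications it is the polynomial Duffy map.

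The second group contains the derivatives of $\bm v$:
\[
   \sum_{i,k} (\partial_{x_k} v_i)\circ\theta \,\bigl(J_{k1}J_{i2} - J_{k2}J_{i1}\bigr).
\]
The bracket is antisymmetric in $(i,k)$ and equals $\pm\det J$. Specifically, it is $+\det J$ for $(k,i)=(1,2)$ and $-\det J$ for $(k,i)=(2,1)$. The sum therefore collapses to
\[
   \det J\,\bigl(\partial_{x_1} v_2 - \partial_{x_2} v_1\bigr)\circ\theta = \theta^*_b(\Curl \bm v).
\]

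The main point needing care is the index bookkeeping in this second group, in particular getting the sign of the $2\times 2$ determinant right. A useful consistency check is the degenerate case $\bm v = \nabla u$: there the identity must reduce to $0=0$, in agreement with \eqref{eq:grad-commute}.

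An alternative, coordinate-free argument identifies $\theta^*_c$ and $\theta^*_b$ with the pullbacks of $1$-forms and $2$-forms. The statement then becomes the naturality of the exterior derivative, $d\theta^* = \theta^* d$. We will present the explicit computation, since it is elementary.
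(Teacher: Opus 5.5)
Your computation is correct. The chain rule gives $\nabla(u\circ\theta)=(D\theta)^\tr(\nabla u)\circ\theta$. In $\Curl(\theta^*_c\bm v)$ the second-derivative terms cancel by equality of mixed partials. The bracket $J_{k1}J_{i2}-J_{k2}J_{i1}$ vanishes for $k=i$ and equals $\pm\det J$ off the diagonal, with exactly the signs you give. This yields $\det(D\theta)\,(\Curl\bm v)\circ\theta$, with the signed determinant that appears in the paper's definition of $\theta^*_b$.

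The paper states this proposition without proof, as the standard two-dimensional instance of $d\,\theta^*=\theta^*d$ (your closing remark). There is therefore no argument to compare against; yours supplies what the paper takes for granted.

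\textbf{One caution about your density step.} In the paper the identities are invoked with $\theta=\psi=\varphi^{-1}$, the rational map $(x,y)\mapsto(x/(1-y),y)$, not the polynomial map $\varphi$. Since $D\psi$ is unbounded as $y\to 1$, your bounded-derivative hypothesis fails there. Indeed $\psi^*_g$ is not bounded from $H^1(\Square)$ into $H^1(\Tri)$: for $v=x$, $\psi^*_g v=x/(1-y)$ has $\partial_x(\psi^*_g v)=1/(1-y)\notin L^2(\Tri)$. This is exactly why the paper imposes divisibility conditions on the reference polynomial spaces.

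This is not a gap, because the paper only applies the identities under $\psi$ to polynomials. Your pointwise computation holds on the open triangle, where $\psi$ is $C^\infty$, and classical and weak derivatives agree there. Membership in $H^1(\Tri)$ or $\HcurlDomain{\Tri}$ then reduces to the $L^2$ integrals the paper computes. For the affine element maps $T_\kk$ your hypotheses do hold, so the density argument applies to them as stated.
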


\subsection{Duffy transformation}

The Duffy transformation was introduced by M.~Duffy in \cite{Duffy1982} to transform integrals over the square pyramid to integrals over the cube.
In this work, we use a similar transformation to map the unit square $\Square = [0,1]^2$ to the unit right triangle $\Tri$ with vertices $(0,0)$, $(1,0)$, and $(0,1)$.
Define $\varphi : \Square \to \Tri$ and its inverse (away from the top vertex) $\psi = \varphi^{-1} : \Tri \to \Square$ by
\begin{align}
   \label{eq:duffy-phi}
   \varphi(x,y) &= (x(1-y), y),\\
   \label{eq:duffy-psi}
   \psi(x,y) &= (x/(1-y), y).
\end{align}
The Jacobians of these mappings are given by
\begin{alignat}{3}
   D\varphi &= \begin{pmatrix}
      1-y & -x \\
      0 & 1
   \end{pmatrix}, &\qquad& \det(D\varphi) = 1-y, \\
   D\psi &= \begin{pmatrix}
      1/(1-y) & x/(1-y)^2 \\
      0 & 1
   \end{pmatrix}, &\qquad& \det(D\psi) = \frac{1}{1-y}.
\end{alignat}

\subsection{Construction of spaces}
\label{sec:spaces}

Consider a triangular mesh $\T = \{ \kk \}$.
Every triangle $\kk \in \T$ is the image of the reference triangle $\Tri$ under an affine transformation $T_\kk : \Tri \mapsto \kk$.
We construct conforming finite elements for the complex \eqref{eq:de-rham} consisting of both piecewise polynomials and certain rational functions;
the spaces are parameterized by a polynomial degree $N \in \mathbb{N}$.
The spaces are defined in terms of local function spaces on the triangles $\kk \in \T$,
\begin{align}
   \label{eq:Vh}
   \Vh &= \{ v \in H^1(\Omega) : v|\kk \in V_N(\kk) \}, \\
   \label{eq:Wh}
   \Wh &= \{ \bm w \in \HcurlOmega : \bm w|\kk \in \bm W_N(\kk) \}, \\
   \label{eq:Zh}
   \Zh &= \{ z \in L^2(\Omega) : z|\kk \in Z_N(\kk) \}.
\end{align}
Let $X(\kk)$ denote one of $V_N(\kk), \bm W_N(\kk), Z_N(\kk)$, and correspondingly, let $H^\bullet$ denote the associated space among $H^1, \Hcurl, L^2$.
The local space $X(\kk)$ is defined as the pullback under the element transformation $T_\kk$ of a reference space $\widehat{X} = X(\Tri)$.
The local reference space is constructed by pulling back polynomials under the Duffy transformation $\psi$ defined in \eqref{eq:duffy-psi}, i.e.\ $\widehat{X} = \psi^*_\bullet \widetilde{X}$, where $\psi^*_\bullet$ is one of $\psi^*_g, \psi^*_c, \psi^*_b$ according to $H^\bullet$.
The \textit{reference polynomial space} $\widetilde{X}$ is a space of bivariate polynomials chosen such that the conformity condition $\widehat{X} \subseteq H^\bullet(\Tri)$ holds.
In what follows, we write these conditions explicitly for each of the spaces considered.

\subsection{\texorpdfstring{$H^1$}{H1} local space}

We define the local space $\widehat{V}_N = \psi^*_g \widetilde{V}_N$, where $\widetilde{V}_N \subseteq \Qq_N$.
The condition that $V_N(\Tri) \subseteq H^1(\Tri)$ determines the space of permissible polynomials $\widetilde{V}_N$.
For any $v \in \Qq_N$, $\psi^*_g v \in L^2(\Tri)$, and so the permissible polynomials are those that satisfy $\nabla(\psi^*_g v) \in L^2(\Tri)$.
By \eqref{eq:grad-commute}, it holds that
\[
   \nabla (\psi^*_g v) = \psi^*_c (\nabla v) = (D\psi)^\tr (\nabla v) \circ \psi.
\]
The $L^2$ norm of the gradient can be computed conveniently by changing coordinates to integrate over the unit square,
\begin{equation}
   \label{eq:h1-computation}
   \begin{aligned}
      \| \nabla (\psi^*_g v) \|_{\bm L^2(\Tri)}^2
         &= \int_{\Tri} \| (D\psi)^\tr (\nabla v) \circ \psi \|^2 \, d\bm x
         = \int_{\Square} \| ((D\psi)^\tr \circ \varphi) (\nabla v) \|^2 \det(D\varphi) \, d\bm x \\
         &= \int_{\Square} \left\|
            \begin{pmatrix}
               1/(1-y) & 0 \\
               x/(1-y) & 1
            \end{pmatrix}
            \begin{pmatrix} \partial_x v \\ \partial_y v \end{pmatrix}
         \right\|^2 (1-y) \, d\bm x.
   \end{aligned}
\end{equation}
From this, it is clear that $\nabla (\psi^*_g v) \in \bm L^2(\Tri)$ if and only if $1-y$ divides $\partial_x v$.
In other words, $v(x,y) = (1-y) q(x,y) + r$; by Euclidean division, the remainder is either zero or degree-zero, so $r$ is a constant.
\begin{definition}
   \label{def:h1}
   The $H^1$ reference polynomial space is given by
   \begin{equation}
      \label{eq:h1-defn}
      \widetilde{V}_N = (1-y) \Qq_{N,N-1} + \Qq_0.
   \end{equation}
\end{definition}
\begin{proposition}
   \label{prop:Vh-polyn}
   The degree-$N$ local $H^1$ space $\widehat{V}_N$ contains $\Pp_N$, all polynomials of total degree at most $N$.
\end{proposition}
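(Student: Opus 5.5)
The plan is to pull back each monomial in $\Pp_N$ by $\varphi$ and check that the result lies in $\widetilde{V}_N$. Since $\widehat{V}_N = \psi^*_g \widetilde{V}_N$ and $\psi^*_g$ is inverted by $\varphi^*_g$ (composition with $\varphi$, which is a bijection away from the top vertex, a set of measure zero), the claim $\Pp_N \subseteq \widehat{V}_N$ is equivalent to $\varphi^*_g \Pp_N \subseteq \widetilde{V}_N$. By linearity it suffices to treat the monomials $x^a y^b$ with $a+b \le N$.

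The key computation is
\[
   \varphi^*_g(x^a y^b) = (x(1-y))^a y^b = x^a (1-y)^a y^b .
\]
We then split into two cases.
\begin{itemize}
\item If $a = 0$, the pullback is $y^b$ with $b \le N$. Write $y^b = 1 - (1-y^b) = 1 - (1-y)(1 + y + \dots + y^{b-1})$. The second term lies in $(1-y)\Qq_{0,b-1} \subseteq (1-y)\Qq_{N,N-1}$, and the constant lies in $\Qq_0$, so $y^b \in \widetilde{V}_N$.
\item If $a \ge 1$, the pullback equals $(1-y)\, x^a (1-y)^{a-1} y^b$. The cofactor has degree $a \le N$ in $x$ and degree $a-1+b \le N-1$ in $y$, so it lies in $\Qq_{N,N-1}$, and the pullback lies in $(1-y)\Qq_{N,N-1}$.
\end{itemize}
Combining the two cases gives $\varphi^*_g \Pp_N \subseteq \widetilde{V}_N$. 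Applying $\psi^*_g$ then yields $\Pp_N = \psi^*_g \varphi^*_g \Pp_N \subseteq \widehat{V}_N$.

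The only delicate point is justifying the identity $\psi^*_g \varphi^*_g p = p$ on $\Tri$. This holds pointwise wherever $y \neq 1$, since $\varphi \circ \psi = \mathrm{id}$ there. The exceptional set is a single point, so the identity holds as elements of $H^1(\Tri)$, and no real obstacle is expected. The degree bookkeeping in the case $a \ge 1$, which is where the total-degree constraint $a + b \le N$ is used, is the main thing to get right.
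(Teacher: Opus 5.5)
Your proposal is correct and follows essentially the same approach as the paper: pull back by $\varphi$, factor $(1-y)$ out of the terms with $x$-exponent $\alpha_1 \geq 1$ (using $\alpha_1 + \alpha_2 \leq N$ for the $y$-degree bound), and handle the pure-$y$ terms by division by $1-y$, which you carry out explicitly with the factorization $1 - y^b = (1-y)(1 + y + \dots + y^{b-1})$. Your remark that $\psi^*_g \varphi^*_g p = p$ holds away from the single top vertex is a small justification the paper leaves implicit.
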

\begin{proof}
   Let $p \in \Pp_N$, and write $p(\bm x) = \sum_{\bm \alpha} c_{\bm \alpha} \bm x^{\bm \alpha}$.
   Since $p = \psi^*_g \varphi^*_g p$, it suffices to verify that $\varphi^*_g p \in \widetilde{V}_N$.
   To this end,
   \begin{align*}
      (\varphi^*_g p)(\bm x)
         &= p(\varphi(\bm x)) = p(x(1-y), y)
         = \sum_{\bm\alpha} c_{\bm\alpha} x^{\alpha_1} (1-y)^{\alpha_1} y^{\alpha_2} \\
         &= (1-y) \Big( \sum_{\bm\alpha, \alpha_1 \geq 1} c_{\bm\alpha} x^{\alpha_1} (1-y)^{\alpha_1 - 1} y^{\alpha_2} \Big) + \sum_{\bm\alpha, \alpha_1 = 0} c_{\bm\alpha} y^{\alpha_2}.
   \end{align*}
   Since $\alpha_1, \alpha_1 + \alpha_2 \leq N$, the first term on the right-hand side is in $(1-y)\Qq_{N,N-1}$.
   Since $\alpha_2 \leq N$, by Euclidean division by $1-y$, the second term on the right-hand side is in $(1-y)\Qq_{0,N-1} + \Qq_0$, and so $\varphi^*_g p \in \widetilde{V}_N$.
\end{proof}

\begin{proposition}
   \label{prop:h1-trace}
   An element of $\widehat{V}_N$ restricted to an edge of $\Tri$ is a polynomial of degree at most $N$.
\end{proposition}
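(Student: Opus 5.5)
The plan is to write a general element of $\widehat{V}_N$ as $\psi^*_g v$ with $v = (1-y)q + c$, where $q \in \Qq_{N,N-1}$ and $c \in \Qq_0$ is a constant, as in \Cref{def:h1}. We then restrict $\psi^*_g v = v \circ \psi$ to each of the three edges of $\Tri$ in turn. On each edge we parametrize, compute the composition explicitly, and check that the result is a polynomial of degree at most $N$ in the edge parameter.

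\emph{Bottom edge} $\{(t,0): t\in[0,1]\}$. Here $\psi(t,0) = (t,0)$, so the restriction is $v(t,0) = q(t,0) + c$. Since $q \in \Qq_{N,N-1}$, this has degree at most $N$ in $t$.

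\emph{Left edge} $\{(0,s): s \in [0,1)\}$. Here $\psi(0,s) = (0,s)$, so the restriction is $(1-s)q(0,s) + c$. This has degree at most $1 + (N-1) = N$ in $s$, and it extends continuously to $s=1$.

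\emph{Hypotenuse} $\{(1-s,s): s \in [0,1)\}$. This is the only case needing care, because $\psi$ is singular at the top vertex $(0,1)$. We have $\psi(1-s,s) = ((1-s)/(1-s), s) = (1,s)$, so the hypotenuse is the image of the right edge $x=1$ of $\Square$. The restriction is therefore $(1-s)q(1,s) + c$, again of degree at most $N$ in $s$.

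The main obstacle is the collapsed top vertex, since $\psi$ is not defined there. Away from it, all three restrictions are the polynomials computed above. Each of them extends continuously to the vertex with the common value $c$: on the left edge and the hypotenuse the factor $1-s$ kills the $q$ term at $s=1$. The vertex is a single point, so it does not affect the trace in $L^2$ or $H^{1/2}$ of the edge. Moreover, $\psi^*_g v$ lies in $H^1(\Tri)$ by the computation \eqref{eq:h1-computation}, so its trace on each edge is well defined, and it coincides with the polynomial obtained on the edge minus the endpoint. This proves the claim. It also shows that the function is continuous at the top vertex, which is useful later for the degrees of freedom.
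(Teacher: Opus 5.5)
Your proof is correct and follows the same route as the paper: evaluate $\psi^*_g v$ on each edge using $\psi(t,0)=(t,0)$, $\psi(0,t)=(0,t)$ and $\psi(1-t,t)=(1,t)$, and read off a univariate polynomial of degree at most $N$ (the paper only needs $\widetilde{V}_N \subseteq \Qq_N$ for this step). Your extra use of the decomposition $(1-y)q + c$ to check continuity at the collapsed top vertex and that the trace is well defined is a harmless refinement that the paper leaves implicit.
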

\begin{proof}
   Let $v \in \widetilde{V}_N$.
   Then, $(\psi^*_g v)(t,0) = v(\psi(t,0)) = v(t,0) \in \PP^N$.
   Similarly, $(\psi^*_g v)(0,t) = v(0,t) \in \PP^N$.
   Finally, $(\psi^*_g v)(1-t,t) = v(1,t) \in \PP^N$.
\end{proof}

\subsection{\texorpdfstring{$\Hcurl$}{H(curl)} local space}
\label{sec:hcurl-space}

We define the polynomial space $\widetilde{\bm W} \subseteq \Qq_{N-1,N+1} \times \Qq_{N,N-1}$ such that $\widehat{\bm W} = \psi^*_c \widetilde{\bm W} \subseteq \HcurlDomain{\Tri}$.
Let $\bm w = (w_1, w_2)$, with $w_1(\bm x) = \sum_{\bm \alpha} c_{\bm\alpha} \bm x^{\bm \alpha}$, $w_2(\bm x) = \sum_{\bm \alpha} d_{\bm\alpha} \bm x^{\bm \alpha}$.
In contrast to the previous case, both conditions $\psi^*_c \bm w \in \bm L^2(\Tri)$ and $\Curl(\psi^*c \bm w) \in L^2(\Tri)$ are nonvacuous.
Indeed, a computation similar to \eqref{eq:h1-computation} gives that
\begin{align}
   \label{eq:hcurl-integral}
   \| \psi^*_c \bm w \|_{\bm L^2(\Tri)}^2
      &= \int_{\Square} \left\|
         \begin{pmatrix}
            1/(1-y) & 0 \\
            x/(1-y) & 1
         \end{pmatrix}
         \begin{pmatrix}w_1 \\ w_2\end{pmatrix}
      \right\|^2 (1-y) \, d\bm x,
\end{align}
and so $\psi^*_c \bm w \in L^2(\Tri)$ if and only if $1-y$ divides $w_1$, i.e.
\begin{equation}
   \label{eq:hcurl-v1-l2}
   w_1(x,y) = (1-y) q_1(x,y).
\end{equation}
Turning to the condition $\Curl(\psi^*_c \bm w) \in L^2(\Tri)$, by \eqref{eq:curl-commute} it holds that
\[
   \Curl(\psi^*_c \bm w) = \psi^*_b (\Curl \bm w).
\]
Then, changing coordinates to integrate over the unit square,
\begin{equation*}
   \label{eq:hcurl-computation}
   \begin{aligned}
      \| \Curl(\psi^*_c \bm w) \|_{\bm L^2(\Tri)}^2
         &= \int_{\Tri} (\det(D\psi) \Curl(\bm w) \circ \psi )^2 \, d\bm x
         = \int_{\Square} \det(D\psi)^2 (\Curl \bm w)^2 \det(D\varphi) \, d\bm x \\
         &= \int_{\Square} (\Curl \bm w)^2 / (1-y) \, d\bm x.
   \end{aligned}
\end{equation*}
This gives the condition that $1 - y$ must divide $\Curl \bm w = \partial_x w_2 - \partial_y w_1$, i.e.
\begin{align*}
   \partial_x w_2(x,y) - \partial_y w_1(x,y)
      &= (1-y) q_2(x, y)
\end{align*}
for some polynomial $q_2$.
Rearranging, this gives
\begin{equation}
   \label{eq:dy-v1-1}
   -\partial_y w_1(x,y) = (1-y) q_2(x,y) - \partial_x w_2(x,y).
\end{equation}
From \eqref{eq:hcurl-v1-l2}, we have that
\begin{equation}
   \label{eq:dy-v1-2}
   -\partial_y w_1(x,y) = -\partial_y ((1-y) q_1(x,y)) = q_1(x,y) - (1-y) \partial_y q_1(x,y).
\end{equation}
Combining \eqref{eq:dy-v1-1} and \eqref{eq:dy-v1-2}, we obtain
\begin{align*}
   q_1(x,y)
      &= (1-y) q_2(x,y) - \partial_x w_2(x,y) - (1-y) \partial_y q_1(x,y) \\
      &= (1-y) (q_2(x,y) - \partial_y q_1(x,y)) - \partial_x w_2(x,y).
\end{align*}
Define $s := q_2 - \partial_y q_1$.
Then,
\[
   q_1(x,y) = (1-y) s(x,y) - \partial_x w_2(x,y).
\]
Substituting into \eqref{eq:hcurl-v1-l2}, we obtain
\begin{equation}
   \label{eq:hcurl-v1-condition}
   w_1(x,y) = (1-y)^2 s(x,y) - (1-y) \partial_x w_2(x,y).
\end{equation}

\begin{definition}
   The $\Hcurl$ reference polynomial space is defined by
   \begin{equation}
      \label{eq:hcurl-defn}
      \widetilde{\bm W}_N = ((1-y)^2 \Qq_{N-1}) \times \{ 0 \}
         + \{ (-(1-y) \partial_x w_2, w_2) : w_2 \in \Qq_{N,N-1} \}.
   \end{equation}
\end{definition}

Just as the $H^1$ space $V_N$ contains the standard $\Pp_N$ Lagrange space, the $\Hcurl$ space $\bm W_N$ contains the first-kind Nédélec space $\bm N_{N-1}$, which is defined by
\[
   \bm N_{N-1} = \mathbfcal{P}_{N-1} \oplus \mathbfcal{S}_{N}, \qquad
   \mathbfcal{S}_{N} = \left\{ \bm s \in \mathbfcal{P}_{N}^H : \bm s \cdot \bm x = 0 \right\},
\]
where $\mathbfcal{P}_{N-1} = [\Pp_{N-1}]^2$, and $\mathbfcal{P}_{N}^H = [\Pp_N^H]^2$ is the space of vector-valued homogeneous polynomials,
\[
   \mathcal{P}_{N}^H = \Big\{ p : p(x,y) = \sum_{i+j = N} c_{ij} x^i y^j \Big\}.
\]
\begin{proposition}
   \label{prop:Wh-polyn}
   The degree-$N$ local $\Hcurl$ space $\bm W_N$ contains the Nédélec first-kind space $\bm N_{N-1}$, and consequently it contains all polynomials of total degree at most $N-1$.
\end{proposition}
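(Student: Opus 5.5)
Following the proof of \Cref{prop:Vh-polyn}, I will show that for each $\bm p \in \bm N_{N-1}$ the pullback $\varphi^*_c \bm p$ lies in $\widetilde{\bm W}_N$. This suffices because $\psi^*_c \varphi^*_c \bm p = \bm p$, since $\varphi^*_c$ and $\psi^*_c$ are inverse pullbacks on the interior of $\Tri$.

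\textbf{Computing the pullback.} For $\bm p = (p_1,p_2)$ we have
\[
   \varphi^*_c \bm p = (D\varphi)^\tr \bm p\circ\varphi
   = \big( (1-y)\, p_1\circ\varphi,\; -x\, p_1\circ\varphi + p_2\circ\varphi \big).
\]
Set $w_2 := -x\,p_1\circ\varphi + p_2\circ\varphi$ and $w_1 := (1-y)\,p_1\circ\varphi$.

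\textbf{Structural criterion.} I will verify two things: that $w_2 \in \Qq_{N,N-1}$, and that $w_1 + (1-y)\partial_x w_2 \in (1-y)^2\Qq_{N-1}$. Given these, $\varphi^*_c\bm p = \big(w_1 + (1-y)\partial_x w_2, 0\big) + \big(-(1-y)\partial_x w_2, w_2\big) \in \widetilde{\bm W}_N$.

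\textbf{Reduction of the second condition via the curl.} The second condition is exactly the derivation leading to \eqref{eq:hcurl-v1-condition}, read in reverse. Since $\psi^*_c\varphi^*_c\bm p = \bm p$ is a polynomial, it belongs to $\HcurlDomain{\Tri}$. Hence the divisibility conditions hold: $1-y$ divides $w_1$, and $1-y$ divides $\Curl(\varphi^*_c \bm p) = \varphi^*_b(\Curl \bm p) = (1-y)(\Curl\bm p)\circ\varphi$, which is evident directly from this formula. Running the algebra gives $w_1 = (1-y)^2 s - (1-y)\partial_x w_2$ for some polynomial $s$. What remains is only the degree bound $s \in \Qq_{N-1}$.

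\textbf{Degree counting (the main obstacle).} I will handle $\mathbfcal{P}_{N-1}$ and $\mathbfcal{S}_N$ separately.

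For $\bm p \in \mathbfcal{P}_{N-1}$, a monomial $x^a y^b$ with $a+b\le N-1$ pulls back to $x^a(1-y)^a y^b$, which lies in $\Qq_{N-1}$. Then $w_2 = -x\,p_1\circ\varphi + p_2\circ\varphi$ has $x$-degree at most $N$ and $y$-degree at most $N-1$, so $w_2 \in \Qq_{N,N-1}$.

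For $\bm s \in \mathbfcal{S}_N$, use the standard representation $\bm s = q(x,y)\,(-y,x)$ with $q \in \Pp^H_{N-1}$. Then
\[
   w_2 = -x(-y\, q\circ\varphi) + x(1-y)\, q\circ\varphi = x\, q\circ\varphi ,
\]
which has $x$-degree at most $N$ and $y$-degree at most $N-1$. So again $w_2 \in \Qq_{N,N-1}$.

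For $s$, I will write
\[
   (1-y)s = p_1\circ\varphi + \partial_x w_2 .
\]
In each case both terms have $x$-degree at most $N-1$ and $y$-degree at most $N$. Dividing by $1-y$, which is exact by the divisibility established above, drops the $y$-degree to $N-1$, so $s \in \Qq_{N-1}$.

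The delicate part is checking that $\partial_x w_2$ does not raise the $y$-degree beyond $N$. For instance, $\partial_x(x\,q\circ\varphi)$ contains terms like $x^{a}(1-y)^a y^b$ with $a+b=N-1$, which have $y$-degree $N-1$, so this is fine. I expect this bookkeeping to be the only step needing care.

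\textbf{Consequence.} Finally, $\bm N_{N-1} \supseteq \mathbfcal{P}_{N-1}$ gives the claim about polynomials of total degree at most $N-1$.
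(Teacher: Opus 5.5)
Your proof is correct. Its skeleton is the paper's: pull back by $\varphi^*_c$, check $w_2\in\Qq_{N,N-1}$ and $w_1+(1-y)\partial_x w_2\in(1-y)^2\Qq_{N-1}$, and treat $\mathbfcal{P}_{N-1}$ and $\mathbfcal{S}_N$ separately. Where you differ is in how you obtain the $(1-y)^2$-divisibility.

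The paper gets it by brute-force expansion. For $\mathbfcal{P}_{N-1}$ it simply asserts that an explicit computation shows it. For $\mathbfcal{S}_N$ it uses the coefficient identities coming from $\bm s\cdot\bm x=0$ and reindexes sums until $(1-y)^2$ can be read off term by term.

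You get it structurally. The identities $w_1=(1-y)\,p_1\circ\varphi$ and $\Curl\varphi^*_c\bm p=(1-y)(\Curl\bm p)\circ\varphi$ hold for \emph{every} polynomial field. So the algebra behind \eqref{eq:hcurl-v1-condition} produces a polynomial $s$ with $(1-y)s=p_1\circ\varphi+\partial_x w_2$, and the proposition reduces to pure degree counting. The factorization $\bm s=q\,(-y,x)$ with $q\in\Pp^H_{N-1}$ then makes $w_2=x\,q\circ\varphi$ immediate.

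Your route is shorter and less error-prone. The paper's index manipulation ends with a factor $(1-iy)$ that should read $(i-y)$, though this does not affect its conclusion. The paper's route, in exchange, yields explicit formulas for the pulled-back fields.

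One correction to your own commentary: the step you flag as delicate is not, since $\partial_x$ never raises the $y$-degree. The only place where $\bm s\cdot\bm x=0$ is genuinely used is the cancellation $xy+x(1-y)=x$, which keeps the $y$-degree of $w_2$ at most $N-1$. For a generic homogeneous field such as $(0,y^N)$ one gets $w_2=y^N\notin\Qq_{N,N-1}$.
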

\begin{proof}
   We first show $\mathbfcal{P}_{N-1} \subseteq \bm W_N$.
   Let $\bm p \in \mathbfcal{P}_{N-1}$.
   It suffices to show that $\bm w = \varphi^*_c \bm p \in \widetilde{\bm W}_N$.
   Writing $\bm p(\bm x) = (\sum_{\bm \alpha} b_{\bm\alpha} \bm x^{\bm \alpha}, \sum_{\bm \alpha} c_{\bm \alpha} \bm x^{\bm \alpha})$ where $|\alpha| \leq N - 1$,
   \begin{align*}
      \bm w(x,y) = (D\varphi)^\tr \bm p(\varphi(x,y))
      = \begin{pmatrix}
         \sum_{\bm\alpha} b_{\bm\alpha} x^{\alpha_1} (1-y)^{\alpha_1+1} y^{\alpha_2} \\
         \sum_{\bm\alpha} \left(-b_{\bm\alpha} x^{\alpha_1+1} (1-y)^{\alpha_1} y^{\alpha_2} + c_{\bm\alpha}x^{\alpha_1}(1-y)^{\alpha_1}y^{\alpha_2} \right)
      \end{pmatrix}.
   \end{align*}
   We see $w_2 \in \Qq_{N,N-1}$ since $\alpha_1, \alpha_1 + \alpha_2 \leq N - 1$.
   An explicit computation shows that $(1-y)^2$ divides $w_1 + (1-y)\partial_x w_2$, so $\bm w \in \widetilde{\bm W}_N$.

   Now, we show that for $\bm s \in \mathcal{S}_N$, $\bm v = \varphi^*_c \bm s \in \widetilde{\bm W}_N$.
   Writing $\bm s(x,y) = (\sum_{i + j = N} b_{ij} x^i y^j, \sum_{i + j = N} c_{ij} x^i y^j)$, we first note that the condition $\bm s \cdot \bm x = 0$ implies that
   \begin{equation}
      \label{eq:nedelec-coeff}
      b_{i-1,j} = -c_{i,j-1}, \qquad
      b_{ij} = -c_{i+1,j-1}, \qquad
      c_{ij} = -b_{i-1,j+1},
   \end{equation}
   where any coefficient out of bounds is taken to be zero.
   Computing the second component of $\bm v$,
   \begin{align*}
      v_2 &= \sum_{i + j = N} \left( -b_{ij} x^{i + 1} (1-y)^{i} y^{j} + c_{ij} x^{i} (1-y)^{i} y^{j} \right)
      = \sum_{i + j = N} \left( c_{i+1,j-1} x^{i + 1} (1-y)^{i} y^{j} + c_{ij} x^{i} (1-y)^{i} y^{j} \right) \\
      &= \sum_{i + j = N} \left( c_{i,j} x^{i} (1-y)^{i-1} y^{j+1} + c_{ij} x^{i} (1-y)^{i} y^{j} \right)
      = \sum_{i + j = N} c_{ij} x^i (1-y)^{i-1} y^j,
   \end{align*}
   and so $v_2 \in \Qq_{N,N-1}$.
   It remains to show that $v_1 + (1-y) \partial_x v_2 \in (1-y)^2 \Qq_{N-1}$.
   Using \eqref{eq:nedelec-coeff},
   \begin{align*}
      v_1 + (1-y) \partial_x v_2
         &= \sum_{i + j = N} \left( b_{ij} x^{i} (1-y)^{i + 1} y^{j} + i c_{ij} x^{i-1} (1-y)^{i} y^j \right) \\
         &= \sum_{i + j = N} \left( -c_{i+1,j-1} x^{i} (1-y)^{i + 1} y^{j} + i c_{ij} x^{i-1} (1-y)^{i} y^j \right) \\
         &= \sum_{i + j = N} \left( -c_{i,j} x^{i-1} (1-y)^{i} y^{j+1} + i c_{ij} x^{i-1} (1-y)^{i} y^j \right)
         = \sum_{i + j = N} c_{ij} x^{i-1} (1-y)^{i} y^j (1 - iy)
   \end{align*}
   We see that $(1-y)^2$ divides the above expression, since for $i = 0$, $c_{ij} = 0$, for $i=1$, $(1-y)^i(1-iy) = (1-y)^2$, and for $i > 1$, $(1-y)^2$ divides the factor $(1-y)^i$.
   Therefore, $v_1 + (1-y) \partial_x v_2 \in (1-y)^2 \Qq_{N-1}$, and $\bm v \in \widetilde{\bm W}_N$ as desired.
\end{proof}

\begin{proposition}
   \label{prop:hcurl-trace}
   The tangential components of $\bm w \in \widehat{\bm W}_N$ are polynomials of degree at most $N - 1$.
\end{proposition}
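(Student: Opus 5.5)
We argue edge by edge, following the proof of \Cref{prop:h1-trace}. Let $\bm w = (w_1,w_2) \in \widetilde{\bm W}_N$ and $\hat{\bm w} = \psi^*_c \bm w = (D\psi)^\tr \bm w\circ\psi$. Tangential components of a pullback can be computed on the square: if $\gamma(t)$ parametrizes an edge of $\Tri$, then $\hat{\bm w}(\gamma(t))\cdot\gamma'(t) = \bm w(\psi(\gamma(t)))\cdot (\psi\circ\gamma)'(t)$. This is just the chain rule, since $(D\psi)\gamma' = (\psi\circ\gamma)'$. So for each edge we identify the curve $\psi\circ\gamma$ in $\Square$ and evaluate the corresponding component of $\bm w$ there. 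Throughout we write $w_1 = (1-y)^2 s - (1-y)\partial_x w_2$ with $s\in\Qq_{N-1}$ and $w_2\in\Qq_{N,N-1}$, as in \eqref{eq:hcurl-defn}.

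\textbf{Bottom edge.} Take $\gamma(t)=(t,0)$. Then $\psi\circ\gamma(t) = (t,0)$ with derivative $(1,0)$, so the tangential trace is
\[
   w_1(t,0) = s(t,0) - \partial_x w_2(t,0).
\]
Here $s(t,0)$ has degree $N-1$, and $\partial_x w_2(t,0)$ has degree $N-1$ because $w_2$ has degree $N$ in $x$. Hence the trace lies in $\PP^{N-1}$.

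\textbf{Left edge.} Take $\gamma(t)=(0,t)$. Then $\psi\circ\gamma = (0,t)$ with derivative $(0,1)$, so the trace is $w_2(0,t)$. This lies in $\PP^{N-1}$ because $w_2$ has degree $N-1$ in $y$.

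\textbf{Hypotenuse.} Take $\gamma(t)=(1-t,t)$ for $t\in[0,1)$. Then $\psi\circ\gamma(t) = ((1-t)/(1-t), t) = (1,t)$ with derivative $(0,1)$, so the trace is $w_2(1,t) \in \PP^{N-1}$.

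The edges through the collapsed vertex $(0,1)$ require care, since $\psi$ is singular there. The identity for the tangential component holds on the open edge $t<1$, and a polynomial identity valid for $t<1$ extends to the whole edge. Apart from this, the only real check is that the tangential component on the hypotenuse involves only $w_2$. This is guaranteed by the computation above, and it is where the restriction $w_2\in\Qq_{N,N-1}$ (degree $N-1$ in $y$) is used. No deeper obstacle is expected.
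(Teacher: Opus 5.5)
Your proof is correct and follows the paper's argument. Both go edge by edge and obtain the same traces $w_1(t,0)$, $w_2(0,t)$ and $w_2(1,t)$; on the hypotenuse the paper gets $\tfrac{1}{\sqrt{2}}\widetilde{w}_2(1,y)$ because it uses the unit tangent. The only difference is cosmetic: the paper expands $(D\psi)^\tr \widetilde{\bm w}\circ\psi$ explicitly on each edge and dots with the tangent. Your identity $\hat{\bm w}(\gamma)\cdot\gamma' = \bm w(\psi\circ\gamma)\cdot(\psi\circ\gamma)'$ skips that expansion, and it makes the cancellation of the $\widetilde{w}_1/(1-y)$ terms on the hypotenuse automatic.
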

\begin{proof}
   Let $\bm w = \psi^*_c \tilde{\bm w}$, with $\tilde{\bm w} = (\tilde{w}_1, \tilde{w}_2) \in \widetilde{\bm W}_N$.
   On the bottom edge, the tangential trace is $(\tilde{w}_1(x,0), x \tilde{w}_1(x,0) + \tilde{w}_2(x,0)) \cdot \bm t = \tilde{w}_1(x,0) \in \PP^{N-1}$ by \eqref{eq:hcurl-defn}.
   Similarly, on the left edge, the tangential trace is $((1-y)^{-1}\tilde{w}_1(0,y), \tilde{w}_2(0,y)) \cdot \bm t = \tilde{w}_2(0,y) \in \PP^{N-1}$.
   Finally, on the diagonal edge, the trace is $((1-y)^{-1} \tilde{w}_1(1,y), (1-y)^{-1} \tilde{w}_1(1,y) + \tilde{w}_2(1,y)) \cdot \bm t = (1/\sqrt{2}) \tilde{w}_2(1,y) \in \PP^{N-1}$.
\end{proof}

\subsection{\texorpdfstring{$L^2$}{L2} local space}

Finally, we characterize polynomials $z(\bm x) = \sum_{\bm \alpha} c_{\bm \alpha} \bm x^{\bm \alpha}$ such that $\psi^*_b z \in L^2(\Tri)$.
As before, we compute
\begin{align*}
   \| \psi^*_b z \|_{L^2(\Tri)}^2
      = \int_{\Tri} (\det(D\psi) z \circ \psi)^2 \, d\bm x
      = \int_{\Square} (\det(D\psi) z )^2 \det(D\varphi) \, d\bm x
      = \int_{\Square} z^2/(1-y) \, d\bm x.
\end{align*}
From this we obtain the condition that $1-y$ must divide $z$, i.e. $z(x,1) \equiv 0$.

\begin{definition}
   The $L^2$ reference polynomial space is given by
   \[
      \widetilde{Z}_N = (1-y) \Qq_{N-1}.
   \]
\end{definition}

\begin{proposition}
   \label{prop:Zh-polyn}
   The local $L^2$ space $\widehat{Z}_N$ contains all polynomials of total degree at most $N-1$.
\end{proposition}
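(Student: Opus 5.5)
Following the pattern of Propositions~\ref{prop:Vh-polyn} and~\ref{prop:Wh-polyn}, the plan is to show that $\varphi^*_b p \in \widetilde{Z}_N$ for every $p \in \Pp_{N-1}$. Once this holds, the identity $p = \psi^*_b \varphi^*_b p$ gives $p \in \psi^*_b \widetilde{Z}_N = \widehat{Z}_N$. This identity is valid because $\psi = \varphi^{-1}$ away from the top vertex, so $\det(D\psi)\circ\varphi \cdot \det(D\varphi) = 1$ and the two pullbacks compose to the identity.

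The main step is the explicit computation. Write $p(\bm x) = \sum_{|\bm\alpha| \le N-1} c_{\bm\alpha} \bm x^{\bm\alpha}$. Since $\det(D\varphi) = 1-y$, we get
\[
   (\varphi^*_b p)(x,y) = (1-y)\, p(x(1-y), y) = (1-y) \sum_{\bm\alpha} c_{\bm\alpha} x^{\alpha_1} (1-y)^{\alpha_1} y^{\alpha_2}.
\]
The sum is a polynomial whose degree in $x$ is at most $\alpha_1 \le N-1$. Its degree in $y$ is at most $\alpha_1 + \alpha_2 \le N-1$, so the sum lies in $\Qq_{N-1}$. Hence $\varphi^*_b p \in (1-y)\Qq_{N-1} = \widetilde{Z}_N$.

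There is no real obstacle here; the argument is a short degree count. The only point needing care is to check that the total-degree bound $|\bm\alpha| \le N-1$ controls the $y$-degree after the substitution $x \mapsto x(1-y)$. It does, because the factor $(1-y)^{\alpha_1}$ raises the $y$-degree by exactly $\alpha_1$. The extra factor $(1-y)$ from the Jacobian supplies precisely the divisibility by $1-y$ required in the definition of $\widetilde{Z}_N$.
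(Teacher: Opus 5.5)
Your proposal is correct and follows the paper's proof: pull $p \in \Pp_{N-1}$ back with $\varphi^*_b$, expand it as $\sum_{\bm\alpha} c_{\bm\alpha} x^{\alpha_1}(1-y)^{\alpha_1+1}y^{\alpha_2}$, and conclude membership in $(1-y)\Qq_{N-1}$ from $\alpha_1,\ \alpha_1+\alpha_2 \le N-1$. You also check explicitly that $\psi^*_b\varphi^*_b$ is the identity, because the Jacobian determinants multiply to one; the paper leaves this step implicit.
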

\begin{proof}
   Let $p \in \Pp_{N-1}$, $p = \sum_{\bm\alpha} c_{\bm\alpha} \bm x^{\bm\alpha}$.
   Then,
   \begin{align*}
      (\varphi^*_b p)(x,y)
         &= (1-y) p(x(1-y),y) = \sum_{\bm\alpha} c_{\bm\alpha} x^{\alpha_1}(1-y)^{\alpha_1+1} y^{\alpha_2}.
   \end{align*}
   Since $\alpha_1, \alpha_1 + \alpha_2 \leq N-1$, it follows that $\varphi^*_b p \in \widetilde{Z}_N$.
\end{proof}

\begin{remark}[Lowest-order elements]
   It can be verified that in the lowest-order case ($N=1$), the elements $V(\kk), \bm W(\kk), Z(\kk)$ defined above coincide exactly with the lowest-order Lagrange, Nédélec (first-kind), and discontinuous Lagrange (piecewise constant) elements.
\end{remark}

\subsection{Degrees of freedom}

We construct unisolvent degrees of freedom (dofs) for each of the spaces $V_h$, $\Wh$, $Z_h$ defined above.
The dofs are given in terms of a lattice on the unit square defined by points
\begin{equation}
   \label{eq:lattice-points}
   \begin{gathered}
      0 = x_0 < x_1 < \ldots < x_N = 1, \\
      0 = y_0 < y_1 < \ldots < y_N = 1.
   \end{gathered}
\end{equation}
We introduce a notation to denote the geometric entities of each dimension in this lattice.
The vertex $(x_i, y_j)$ is denoted $(i, j)$, the edge $[x_i, x_{i+1}] \times \{ y_j \}$ is denoted $(\ii, j)$ (and similarly for $(i, \jj)$), and the cell $[x_i, x_{i+1}] \times [y_j, y_{j+1}]$ is denoted $(\ii, \jj)$.

We briefly describe the construction of dofs for the quadrilateral finite element spaces using the entities of this lattice.
The degrees of freedom for the $H^1$ local space $\Qq_N$ are point evaluations at the vertices $(i,j)$.
The associated dual basis functions are the standard Lagrange interpolating polynomials at the points $(x_i, x_j)$.
For the $\Hcurl$ local space $\Qq_{N-1,N} \times \Qq_{N,N-1}$, the degrees of freedom are integrals of the tangential component over the edges $(\ii, j)$ and $(i, \jj)$.
The associated basis functions are the tensor product of a univariate Lagrange interpolating polynomial with a univariate \textit{histopolating} polynomial (a histopolating polynomial is one with prescribed integrals or mean values over specified intervals).
For the $L^2$ local space $\Qq_{N-1}$, the degrees of freedom are integrals over cells $(\ii, \jj)$, and the associated basis functions are the tensor product of two histopolating polynomials.
These constructions were referred to as interpolation--histopolation bases in \cite{Pazner2023};
similar constructions were considered in \cite{Gerritsma2010} under the name ``mimetic bases''.
These bases have the important property that the discrete differential operators $\nabla : H^1(\Square) \to \HcurlDomain{\Square}$ and $\Curl : \HcurlDomain{\Square} \to L^2(\Square)$ are given exactly by the vertex--edge and edge--cell incidence matrices of the lattice, respectively.

To construct degrees of freedom for the local spaces on the unit triangle, we pull back the quadrilateral degrees of freedom under the Duffy transformation.
The lattice on $\Square$ defined by the points $(x_i, y_j)$ is mapped to a lattice on $\Tri$ defined by $(\tilde{x}_i, \tilde{y}_j) = \varphi(x_i, y_j)$.
This lattice consists of $N(N-1)$ quadrilateral cells and $N$ triangular cells.
The mapped degrees of freedom correspond to point evaluations at the vertices, integrals over the edges, and integrals over the cells of this lattice.

Under $\varphi$, all of the vertices $(x_i, 1)$ are mapped to the same point $(0, 1)$, so there are a total $N^2 + N + 1$ vertices in this lattice.
As a consequence, all of the $H^1$ degrees of freedom that are point evaluations along the top edge of $\Square$ collapse to a single degree of freedom, which is point evaluation at the upper vertex of the unit triangle.
All of the edges $[x_i, x_{i+1}] \times \{ 1 \}$ are mapped to the single point $(0,1)$, resulting in $2N^2 + N$ edges in the triangular lattice.
The $\Hcurl$ degrees of freedom corresponding to the top edge of $\Xi$ are all mapped to the zero functional, and can be discarded.
Each subcell in the unit square is mapped to a distinct subcell in the unit triangle, and so there are $N^2$ degrees of freedom in both the mapped and original $L^2$ space.
These degrees of freedom are unisolvent, and the associated dual basis functions can be constructed explicitly.

\begin{theorem}
   \label{thm:unisolvent}
   The following are unisolvent degrees of freedom for the spaces $\widehat{V}_N$, $\widehat{\bm W}_N$, and $\widehat{Z}_N$.
   \begin{enumerate}[label=(\roman*)]
      \item The degrees of freedom for the $H^1$ space $\widehat{V}_N$ are point evaluations,
      \begin{equation}
         \label{eq:h1-dofs}
         v \mapsto v(\tilde{x}_i, \tilde{y}_j),
      \end{equation}
      for all vertices $(i,j)$ in the triangular lattice.
      \item The degrees of freedom for the $\Hcurl$ space $\widehat{\bm W}_N$ are integrals of the tangential components over edges, i.e.
      \begin{equation}
         \label{eq:hcurl-dofs}
         \bm w \mapsto \int_e \bm w \cdot \bm t \, ds,
      \end{equation}
      where $e$ is an edge $(\ii, j)$ or $(i, \jj)$ in the triangular lattice.
      \item The degrees of freedom for the $L^2$ space $\widehat{Z}_N$ are cell integrals,
      \begin{equation}
         \label{eq:l2-dofs}
         z \mapsto \int_\tau z \, d\bm x,
      \end{equation}
      where $\tau$ is a cell $(\ii, \jj)$ in the triangular lattice.
   \end{enumerate}
\end{theorem}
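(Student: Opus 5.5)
The plan is to transfer everything to the unit square via the Duffy map and use the known unisolvence of the interpolation--histopolation dofs on $\Square$. The tool is the change of variables
\[
   \int_{\varphi(e)} \bm w\cdot\bm t\,ds = \int_e (\varphi^*_c\bm w)\cdot \bm t\,ds,
   \qquad
   \int_{\varphi(\tau)} z\,d\bm x = \int_\tau \varphi^*_b z\,d\bm x .
\]
Together with $v(\varphi(x_i,y_j)) = (\varphi^*_g v)(x_i,y_j)$, this shows that each triangular dof applied to $\psi^*_\bullet \tilde u$ equals the corresponding square dof applied to $\tilde u$. For edges and vertices collapsed to $(0,1)$, the square dof is identically zero or constant on $\widetilde X$, as explained below.

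For each space I will proceed in two steps. First, I count dimensions and check that they match the number of dofs. Second, I show that if all triangular dofs vanish, then the function is zero.

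\textbf{$H^1$ case.} The dimension is $\dim \widetilde V_N = (N+1)N + 1$, since $(1-y)\Qq_{N,N-1}\cap \Qq_0 = \{0\}$; this equals $N^2+N+1$, the vertex count. Every $\tilde v \in \widetilde V_N$ is constant on $y=1$ with value $r$, so the collapsed vertex dof is consistent. If all dofs vanish, then $\tilde v$ vanishes at every lattice point $(x_i,y_j)$ of $\Square$. Since $\tilde v\in\Qq_N$, unisolvence of the tensor Lagrange interpolation gives $\tilde v=0$.

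\textbf{$L^2$ case.} The dimension of $(1-y)\Qq_{N-1}$ is $N^2$, which equals the number of cells. Vanishing cell integrals mean $\tilde z$ has zero mean on every subcell. Since $\tilde z\in\Qq_{N}$, this alone does not suffice; here I must use the factor $(1-y)$. Write $\tilde z=(1-y)q$ with $q\in\Qq_{N-1}$. The conditions $\int_{[x_i,x_{i+1}]} (\cdot)\,dx$ for $i=0,\dots,N-1$ determine any polynomial of degree $N-1$ in $x$, so histopolation in $x$ forces $\int_{y_j}^{y_{j+1}}(1-y)q(x,y)\,dy = 0$ for each $j$ and each $x$. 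For fixed $x$, let $g(y) = \int_0^y (1-s)q(x,s)\,ds$. This is a polynomial of degree $\le N+1$ vanishing at $y_0,\dots,y_N$, and it also satisfies $g'(1)=0$. Hence $g = c\prod_j (y-y_j)$. Then $g'(1)=c\prod_{j<N}(1-y_j)=0$ gives $c=0$, so $q=0$.

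\textbf{$\Hcurl$ case.} This is the main obstacle. I will first compute $\dim\widetilde{\bm W}_N = N^2 + N(N+1) = 2N^2+N$; the sum is direct because the second summand is determined by $w_2$. This matches the edge count. Rather than a direct argument, I intend to use the exact sequence property. Assume all edge dofs of $\bm w$ vanish. Then all cell dofs of $\Curl\bm w$ vanish by Stokes' theorem on each subcell: the triangular cells at the top have boundary made of lattice edges plus a degenerate edge. Since $\Curl \widehat{\bm W}_N\subseteq \widehat Z_N$ by \eqref{eq:curl-commute} and the definitions, the $L^2$ case gives $\Curl\bm w=0$. So $\tilde{\bm w}$ is curl-free on $\Square$, hence $\tilde{\bm w}=\nabla \tilde v$ with $\tilde v$ a polynomial. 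I then check that $\tilde v\in\widetilde V_N$: $\partial_y\tilde v = w_2\in\Qq_{N,N-1}$, and $\partial_x \tilde v = w_1$ is divisible by $(1-y)$. Vanishing edge integrals mean $\tilde v$ takes the same value at all lattice vertices, connected via edges. The $H^1$ unisolvence then gives $\tilde v$ constant, so $\bm w=0$. The delicate points are verifying this primitive lies in $\widetilde V_N$ and handling the degenerate top cells in Stokes' theorem; I will check both carefully.
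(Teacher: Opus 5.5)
Your proposal is correct. Part (i) is the paper's argument, but for (ii) and (iii) you take a genuinely different route.

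\textbf{How the paper does (ii) and (iii).} It writes down the dual bases explicitly:
\begin{itemize}
\item for the $(\ii,j)$ edges, tensor products of Lagrange and histopolation polynomials;
\item for the $(i,\jj)$ edges, a first component corrected by a term $(1-y)^2 s_1(x)s_2(y)$, where $s_2$ interpolates $\widehat L_{y,j}/(1-y)$;
\item for the cells, $\widehat L_{x,i}(x)\,h_j'(y)$, where $h_j=(1-y)^2 g_j$ has prescribed nodal values.
\end{itemize}

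\textbf{How you do them.} You use a dimension count plus injectivity.
\begin{itemize}
\item In (iii), the antiderivative $g$ has $N+1$ roots and satisfies the extra condition $g'(1)=0$, which is where the factor $1-y$ enters. This cleanly replaces the construction of $f_j,g_j,h_j$.
\item In (ii), you reduce to (i) and (iii). You apply Stokes' theorem on the square subcells; the collapsed top edge contributes nothing because $\tilde w_1(x,1)=0$. You then use the inclusion $\Curl\widetilde{\bm W}_N\subseteq\widetilde Z_N$ and exactness of the polynomial complex on $\Square$.
\end{itemize}

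\textbf{Two small corrections.} First, the inclusion $\Curl\widetilde{\bm W}_N\subseteq\widetilde Z_N$ does not follow from \eqref{eq:curl-commute} alone. You need the short computation $\Curl\tilde{\bm w}=(1-y)\bigl(2s-(1-y)\partial_y s+\partial_x\partial_y w_2\bigr)$. Alternatively, note that $\Curl\tilde{\bm w}\in\Qq_{N-1,N}$ and that it is divisible by $1-y$ by the conformity built into the definition of $\widetilde{\bm W}_N$. Second, in the final step you only need $\tilde v\in\Qq_N$, not $\tilde v\in\widetilde V_N$. The non-collapsed edges already connect all $(N+1)^2$ square-lattice vertices, so $\tilde v$ is constant.

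\textbf{What each route buys.} Your route is shorter and more structural. It proves the commuting-diagram and exactness facts that the paper relies on but leaves implicit: the incidence-matrix representation in \Cref{rem:discrete-ops}, and the factorization $A_{\Wh}=C_{\bm W}^T A_{Z_h} C_{\bm W}$. The paper's route yields the explicit basis functions themselves, which an implementation needs and which your existence argument does not provide.
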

\begin{proof}
   \textbf{Claim (\textit{i}).}
   First note that by \eqref{eq:h1-defn}, $\dim(\widehat{V}_N) = \dim(\widetilde{V}_N) = N^2 + N + 1$, so it suffices to show that any element of $\widehat{V}_N$ that vanishes at all lattice vertices must be identically zero.
   For $v \in \widehat{V}_N$, let $\tilde{v} = \varphi^*_g v \in \widetilde{V}_N$.
   Suppose that $v(\tilde{x}_i, \tilde{y}_j) = 0$ for all vertices $(i,j)$ of the triangular lattice.
   Then, $\tilde{v}(x_i, y_j) = 0$ for all vertices $(i,j)$ of the square lattice.
   Since $\tilde{v} \in \Qq_N$, and point evaluations at the vertices $(i,j)$ are unisolvent degrees of freedom for $\Qq_N$, $\tilde{v} \equiv 0$ and so $v \equiv 0$.
   The basis functions dual to these degrees of freedom are the pullbacks of the Lagrange interpolating polynomials defined on the square lattice.

   \textbf{Claim (\textit{ii}).}
   By \eqref{eq:hcurl-defn}, $\dim(\widehat{\bm W}_N) = \dim(\widetilde{\bm W}_N) = 2N^2 + N$, which is equal to the number of edges in the collapsed lattice.
   We demonstrate unisolvence by constructing the basis dual to the degrees of freedom.
   Let $L_{x,i}(x)$ denote the Lagrange interpolating polynomial satisfying $L_{x,i}(x_j) = \delta_{ij}$.
   Similarly, define $L_{y,i}(y)$ by $L_{y,i}(y_j) = \delta_{ij}$, since, in general, different lattice points may be used along the $x$ and $y$ axes.
   Let $\widehat{L}_{x,i}$ denote the unique degree-$(N-1)$ polynomial satisfying
   \[
      \int_{x_{j}}^{x_{j+1}} \widehat{L}_{x,i}(x) \, dx = \delta_{ij},
   \]
   and similarly for $\widehat{L}_{y,i}$.
   Note that $\widehat{L}_{x,i}(x) = \sum_{k=i+1}^N L'_{x,k}(x)$.
   For the edge $(\ii, j)$, it is straightforward to verify that
   \[
      \widetilde{\bm w}_{\ii,j}(x,y) = \frac{1}{1-y_j}\begin{pmatrix}
         (1-y) \widehat{L}_{x,i}(x) L_{y,j}(y) \\
         0
      \end{pmatrix}
      \in ((1-y)^2 \Qq_{N-1}) \times \{ 0 \} \subseteq \widetilde{\bm W}_N,
   \]
   satisfies the desired property on the square lattice, and so we can set $\bm w_{\ii, j} = \psi^*_c \widetilde{\bm w}_{\ii,j}$.

   On edges $(i,\jj)$ the construction is slightly more involved.
   For $\widetilde{\bm w}_{i,\jj} = (\widetilde{w}_1, \widetilde{w}_2)$, the component $\widetilde{w}_2$ must be given by
   \[
      \widetilde{w}_2(x,y) = L_{x,i}(x) \widehat{L}_{y,j} (y).
   \]
   The condition that $\widetilde{\bm w}_{i,\jj} \in \widetilde{\bm W}_N$ then dictates that $\widetilde{w}_1$ must be given by
   \begin{equation}
      \label{eq:w1-defn}
      \widetilde{w}_1(x,y) = (1-y)^2 s(x,y) - (1-y) L'_{x,i}(x) \widehat{L}_{y,j}(y)
   \end{equation}
   for some $s \in \Qq_{N-1}$.
   The duality condition requires that
   \begin{equation}
      \label{eq:w1-duality}
      \int_{x_k}^{x_{k+1}} \widetilde{w}_1(x,y_\ell) \, dx = 0
   \end{equation}
   for all $k$ and $\ell$.
   Inserting \eqref{eq:w1-defn} into \eqref{eq:w1-duality}, we obtain
   \begin{align*}
      \int_{x_k}^{x_{k+1}} s(x,y_\ell) \, dx
         &= \frac{\widehat{L}_{y,j}(y_\ell)}{1 - y_\ell} L_{x,i}(x)\Big|_{x_k}^{x_{k+1}}
         = \frac{\widehat{L}_{y,j}(y_\ell)}{1 - y_\ell} (\delta_{i,k+1} - \delta_{ik}).
   \end{align*}
   We write $s(x,y) = s_1(x) s_2(y)$, where $s_2(y)$ is the unique degree-$(N-1)$ polynomial that interpolates the function $\widehat{L}_{y,j}(y)/(1-y)$ at the $N$ points $y_\ell$, $\ell < N + 1$.
   Then, $s_1(x)$ must satisfy
   \begin{align*}
      \int_{x_k}^{x_{k+1}} s_1(x) \, dx
         &= \delta_{i,k+1} - \delta_{ik}
         = \delta_{i-1,k} - \delta_{ik},
   \end{align*}
   and hence,
   \[
      s_1(x) = \widehat{L}_{x,{i-1}}(x) - \widehat{L}_{x,i}(x),
   \]
   where the first term is omitted in the case that $i = 1$.

   \textbf{Claim (\textit{iii}).}
   To demonstrate unisolvence of the degrees of freedom, we explicitly construct a dual basis.
   Consider the $(\ii,\jj)$ cell $[x_i, x_{i+1}] \times [y_j, y_{j+1}]$.
   For $j < N$, define $f_j$ to be the degree-$N$ polynomial satisfying
   \[
      f_j(y_i) = \begin{cases}
         -1/(1 - y_i) \quad&\text{ if } i \leq j, \\
         0 \quad&\text{ if } i > j.
      \end{cases}
   \]
   Since $j < N$, we have $f_j(y_{N}) = 0$, and so $1-y$ divides $f_j$, i.e.\ $f_j(y) = (1-y)g_j(y)$, where $\deg(g_j) = N - 1$.
   Set $h_j(y) = (1-y) f_j(y) = (1-y)^2 g_j(y)$ and define $z_j(y) := h_j'(y) = (1-y)(2 g_j(y) + (1-y)g_j'(y))$.
   The integrals over each interval $[y_i, y_{i+1}]$ can be computed,
   \begin{align*}
      \int_{y_i}^{y_{i+1}} z_j(y) \, dy
         &= \int_{y_i}^{y_{i+1}} h_j'(y) \, dy
         = h_j(y_{i+1}) - h_j(y_i)
         = \delta_{ij}.
   \end{align*}
   Define the functions $\tilde{z}_{ij} \in \widetilde{Z}_N$ by
   \[
      \tilde{z}_{ij}(x,y) = \widehat{L}_{x,i}(x) z_j(y),
   \]
   which satisfy
   \begin{align*}
      \int_{y_\ell}^{y_{\ell + 1}} \int_{x_k}^{x_{k+1}} \tilde{z}_{ij} \, dx \, dy
         &= \int_{x_k}^{x_{k+1}} \widehat{L}_{x,i}(x) \, dx \int_{y_\ell}^{y_{\ell + 1}} z_j(y) \, dy
         = \delta_{ik} \delta_{j\ell}.
   \end{align*}
   Setting $z_{ij} = \psi^*_b \tilde{z}_{ij} \in \widehat{Z}_N$, and letting $\tau$ denote the $(\ii, \jj)$ cell of the triangular lattice,
   \begin{align*}
      \int_\tau z_{ij} \, d\bm x
         = \int_{y_\ell}^{y_{\ell + 1}} \int_{x_k}^{x_{k+1}} \tilde{z}_{ij} \, dx \, dy = \delta_{ik} \delta_{j\ell},
   \end{align*}
   so the basis $z_{ij}$ is dual to the degrees of freedom.
\end{proof}

\subsection{Construction of global spaces}

After the specification of the local spaces $V_N(\kk), \bm W(\kk), Z(\kk)$, the global finite element spaces $V_h, \Wh, Z_h$ can be constructed in the standard manner, through the use of global degrees of freedom, which result from the identification of coincident local degrees of freedom on shared geometric entities (vertices and edges in $V_h$, and edges in $\Wh$; the global space $Z_h$ is broken, and no global identification is required).
By \Cref{prop:h1-trace}, the trace of an element of $V_N(\kk)$ on each mesh edge is a polynomial of degree at most $N$, and so identifying coincident degrees of freedom (vertex values and $N - 1$ distinct point values on each edge interior) ensures continuity across elements, and so $V_h \subseteq H^1(\Omega)$.
Similarly, by \Cref{prop:hcurl-trace}, the tangential trace of an element of $\bm W(\kk)$ is a polynomial of degree at most $N - 1$.
The $\Hcurl$ dofs on each edge are integrals of the tangential trace over $N$ disjoint subintervals, which uniquely identifies a polynomial of degree $N-1$ (see e.g.\ \cite{Chihara1978}).
Therefore, identification of coincident dofs ensures continuity of tangential traces across elements, and so $\Wh \subseteq \HcurlOmega$.
This completes the construction of the global spaces defined in (\ref{eq:Vh}--\ref{eq:Zh}).

Additionally, by \Cref{prop:h1-trace}, the element $V(\kk)$ is compatible with the standard $\Qq_N$ and $\Pp_N$ elements on quadrilaterals and triangles, respectively.
Similarly, by \Cref{prop:hcurl-trace}, the element $\bm W(\kk)$ is compatible with the degree-$N$ quadrilateral Nédélec element and first-kind triangular Nédélec elements.
In particular, this means that the spaces (\ref{eq:Vh}--\ref{eq:Zh}) can be defined on mixed meshes containing both triangles and quadrilaterals.
The degrees of freedom (\ref{eq:h1-dofs}--\ref{eq:l2-dofs}) have analogues for quadrilateral elements using the square lattice \eqref{eq:lattice-points} that coincide exactly on edges of the mesh skeleton.

\begin{remark}[Best approximation]
   By \Cref{prop:Vh-polyn,prop:Wh-polyn,prop:Zh-polyn}, the local spaces contain all polynomials of degree $N$ (in the case of $\Vh$) or degree $N-1$ (in the case of $\Wh$ and $\Zh$).
   In fact, the spaces $\Vh$, $\Wh$, and $\Zh$ contain the standard Lagrange, Nédélec, and $L^2$ simplicial finite element spaces, together with certain rational functions.
   Therefore, the best approximation in these spaces is at least as good as in the standard spaces, and the asymptotic convergence rates and error estimates are expected to be comparable to those of the standard spaces.
\end{remark}

\begin{remark}[Quadrature]
   Although the spaces $\Vh, \Wh, \Zh$ contain rational functions, numerical integration can be performed accurately using standard Gaussian (or Jacobi--Gauss) quadrature rules by changing variables to the reference square.
   Under this change of variables, by construction of the reference spaces $\widetilde{V}_N, \widetilde{\bm W}_N, \widetilde{Z}_N$, the integrands are all polynomials.
   This corresponds to the use of a Stroud conical quadrature rule for the triangle \cite{Stroud1971}.
\end{remark}

\begin{remark}[Sum factorization]
   The use of tensor-product Gaussian quadrature rules on the reference square, combined with the tensor-product structure of the lattice \eqref{eq:lattice-points} defining the degrees of freedom, allows for the use of sum factorization techniques to reduce the computational complexity of operator evaluation and matrix assembly with these spaces.
   Using sum factorization, the cost of operator evaluation can be reduced from $\mathcal{O}(N^4)$ to $\mathcal{O}(N^3)$, while also obviating the need to assemble the system matrix, which, in the naive method, requires $\mathcal{O}(N^6)$ operations.
   This motivates the desire for \textit{matrix-free} methods, and in particular, matrix-free preconditioners, which is the focus of the subsequent section.
\end{remark}

\section{Low-order-refined preconditioning}
\label{sec:lor}

Consider the bilinear forms defined on the spaces $\Vh, \Wh, \Zh$,
\begin{align}
   \label{eq:a-h1}
   a_{\Vh}(u, v) &:= (\nabla u, \nabla v) \\
   \label{eq:a-hcurl}
   A_{\Wh}(\bm u, \bm v) &:= (\Curl \bm u, \Curl \bm v) \\
   \label{eq:a-l2}
   a_{Z_h}(u, v) &:= (u, v),
\end{align}
and the associated stiffness matrices $A_{V_h}, A_{\Wh}$ and mass matrix $A_{Z_h}$ in the basis dual to the degrees of freedom (\ref{eq:h1-dofs}--\ref{eq:l2-dofs}).
For \eqref{eq:a-h1} and \eqref{eq:a-hcurl}, we also impose homogeneous Dirichlet boundary conditions on $\partial\Omega$.
The condition numbers of the stiffness matrices scale as $\mathcal{O}(h^{-2} N^4)$, and so effectively preconditioning these systems is essential.
In this section, we develop spectrally equivalent low-order refined preconditioners;
we demonstrate that certain lowest-order discretizations are spectrally equivalent, independent of $N$, to the high-order system, and so any effective preconditioner for the lowest-order preconditioner can be applied directly as a preconditioner for the high-order system.
The advantage of the low-order-refined system is that the associated matrix is very sparse, and there are a large number of existing effective preconditioners for low-order finite element discretizations that can be chosen from.

\subsection{Norm and spectral equivalences}

In the present section, we are concerned with the construction of low-order discretizations on refined meshes that give rise to matrices $A_{V_0}, A_{\bm W_0}, A_{Z_0}$ that are spectrally equivalent to $A_{V_h}, A_{\Wh}, A_{Z_h}$, independent of the polynomial degree $N$.
From $\T$, a refined mesh $\T_0$ is constructed using the collapsed lattice defined by \eqref{eq:lattice-points}.
By superimposing this lattice on each mesh element, each triangle $\kk \in \T$ is decomposed into $N^2 - N$ quadrilaterals and $N$ triangles, resulting in the refined mesh $\T_0$.
On the refined mesh, the standard lowest-order finite element spaces $V_0, \bm W_0, Z_0$ can be formed.
Note that the degrees of freedom for each of the lowest-order spaces on the refined mesh $\T_0$ coincide exactly with the degrees of freedom for the high-order spaces on the original mesh.
Letting $I_V, I_{\bm W}, I_Z$ and $I_{V_0}, I_{\bm W_0}, I_{Z_0}$ denote the nodal interpolation operators defined by the dofs (\ref{eq:h1-dofs}--\ref{eq:l2-dofs}), we obtain the following commutative diagram.
\begin{equation}
   \label{eq:cd}
   \begin{tikzcd}
      H^1(\Omega) \arrow[d, "I_V"] \arrow[r, "\nabla"] &
      \HcurlOmega \arrow[d, "I_{\bm W}"] \arrow[r, "\Curl"] &
      L^2(\Omega) \arrow[d, "I_Z"] \\
      \Vh  \arrow[d, "I_{V_0}"] \arrow[r, "\nabla"] &
      \Wh  \arrow[d, "I_{\bm W_0}"] \arrow[r, "\Curl"] &
      \Zh \arrow[d, "I_{Z_0}"] \\
      V_0 \arrow[r, "\nabla"] &
      \bm W_0 \arrow[r, "\Curl"] &
      Z_0
   \end{tikzcd}
\end{equation}

\begin{remark}[Matrix representations of discrete derivatives and interpolations]
   \label{rem:discrete-ops}
   Because the degrees of freedom for the high-order spaces $\Vh, \Wh, \Zh$ coincide with those of the low-order spaces $V_0, \bm W_0, Z_0$, the matrix representation of each of the interpolation operators $I_{V_0} : \Vh \to V_0$, $I_{\bm W_0} : \Wh \to V_0$, $I_{Z_0} : \Zh \to Z_0$ is the identity matrix.
   Furthermore, by construction of the degrees of freedom in \Cref{thm:unisolvent}, the discrete gradient and curl operators are exactly given by the incidence matrices of the geometric entities of the triangular lattice.
   In particular, since the degrees of freedom for the high-order and low-order matrices coincide, the high-order and low-order discrete differential operators have the same matrix representations.
\end{remark}

\begin{remark}[Sparsity]
   \label{rem:sparsity}
   The element matrices associated with the high-order systems are fully dense: there is an all-to-all coupling between the degrees of freedom belonging to a given element.
   Therefore, total number of nonzeros in the system matrices $A_{\Vh}, A_{\Wh}, A_{\Zh}$ grows as $N^4$, and the number of nonzeros per row grows as $N^2$.
   In contrast, the low-order-refined system matrices have coupling only between neighboring degrees of freedom in the triangular lattice.
   In the $H^1$ space, within a macro-element, away from the top vertex, each degree of freedom is coupled to at most nine degrees of freedom, counting self-connections.
   The top vertex is connected to its $N+1$ adjacent vertices.
   In total, there are $9 N^2 - N + 1$ nonzeros in each low-order-refined macro-element matrix, and so the average number of nonzeros per row of $A_{V_0}$ approaches 9, which is the same as a lowest-order quadrilateral discretization, even though rows associated with collapsed vertex degrees of freedom will have $\mathcal{O}(N)$ nonzeros.
   In the case of the $\Hcurl$ space, there are $\mathcal{O}(1)$ nonzeros per row of $A_{\bm W_0}$.
   The $L^2$ space $Z_0$ is fully decoupled, and so the matrix $A_{Z_0}$ is diagonal.
\end{remark}

Note that while the high-order spaces $\Vh, \Wh, \Zh$ are independent of the choice of lattice points, the refined mesh $\T_0$, and hence the low-order-refined spaces $V_0, \bm W_0, Z_0$ depend on the choice of points.
The spectral equivalence properties of $A_{V_0}, A_{\bm W_0}, A_{Z_0}$ depend strongly on the choice of lattice points.
Our main theorem concerns the choice $x_i = y_i = \frac{1}{2}(1 + \xi^{(0,0)}_{i,N})$, where $\xi^{(0,0)}_{i,N}$ are the Gauss--Lobatto abscissas on the interval $[-1,1]$, discussed in greater detail in \Cref{sec:jacobi}.
In this case, where no ambiguity will arise, we will refer to the points $x_i$ and $y_i$ as the Gauss--Lobatto points on the interval $[0,1]$.

\begin{theorem}
   \label{thm:spectral-equivalence}
   Let $A$ be one of $A_{\Vh}, A_{\Wh}, A_{\Zh}$, and let $A_0 \in \{ A_{V_0}, A_{\bm W_0}, A_{Z_0} \}$ be the corresponding low-order-refined system matrix defined on Gauss--Lobatto lattice points.
   Then, $\kappa(A_0^{-1} A) \approx 1$.
\end{theorem}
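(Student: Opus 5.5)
Since the interpolation operators $I_{V_0}, I_{\bm W_0}, I_{Z_0}$ are the identity in matrix form (\Cref{rem:discrete-ops}), the statement $\kappa(A_0^{-1}A) \approx 1$ amounts to proving norm equivalences $\|\nabla v\| \approx \|\nabla I_{V_0} v\|$ for $v \in \Vh$, $\|\Curl \bm w\| \approx \|\Curl I_{\bm W_0}\bm w\|$ for $\bm w \in \Wh$, and $\|z\| \approx \|I_{Z_0} z\|$ for $z \in \Zh$, with constants independent of $h$ and $N$. All three forms are sums of element contributions, and $T_\kk$ is affine, so it suffices to prove the equivalences on the reference triangle $\Tri$ up to shape-regularity constants. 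The curl case follows from the $L^2$ case. By the commutative diagram \eqref{eq:cd}, $\Curl I_{\bm W_0}\bm w = I_{Z_0}\Curl\bm w$, and $\Curl \Wh \subseteq \Zh$, so the curl--curl equivalence is the $L^2$ equivalence applied to $z=\Curl\bm w$. For the Dirichlet problem the seminorm and the form agree. The work therefore reduces to the $H^1$ seminorm and the $L^2$ norm on $\Tri$.

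Next I pull both sides back to the square $\Square$ via $\varphi$. The low-order spaces on the collapsed lattice pull back to spaces on the Gauss--Lobatto tensor lattice. The $H^1$ one pulls back to $\Qq_1$ on each subcell, modulo the collapsed top row. The computation \eqref{eq:h1-computation} then gives
\[
 |\psi^*_g v|_{1,\Tri}^2 = \int_\Square \Big( \frac{(\partial_x v)^2(1+x^2)}{1-y} + \frac{2x\,\partial_x v\,\partial_y v}{1} \cdot\frac{1}{1}\Big)\dots,
\]
which is equivalent, with constants independent of $N$ since $x\in[0,1]$, to $\int (\partial_x v)^2 (1-y)^{-1} + (\partial_y v)^2(1-y)$. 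The $L^2$ case similarly gives $\int z^2(1-y)^{-1}$. These are tensor-product Jacobi-weighted norms with weight $(1-y)^{\pm1}$ in $y$ and unit weight in $x$. In the $H^1$ case, $\partial_x v = (1-y)q$, so the first term becomes $\int q^2 (1-y)$, again a Jacobi weight. In the $L^2$ case, $z=(1-y)\tilde z$, and the norm becomes $\int \tilde z^2(1-y)$.

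Because the norms, the polynomial spaces, and the lattices are all tensor products, I reduce to one-dimensional estimates by Fubini applied one direction at a time. In $x$ the weight is trivial, so the classical 1D result applies: Gauss--Lobatto piecewise-linear interpolation (respectively histopolation) is $N$-uniformly equivalent in $L^2$ and $H^1$ seminorm on $\PP^N$. In $y$ I need analogous equivalences in the Jacobi-weighted norms $\|\cdot\|_{L^2_{(1,0)}}$ and $\|\cdot\|_{L^2_{(-1,0)}}$, for polynomials vanishing at $y=1$ or divisible by $(1-y)$. The interpolation points there are Gauss--Lobatto points for weight $(0,0)$, while the norm carries weight $(\pm1,0)$. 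This is the announced key step: stability of Jacobi--Gauss--Lobatto interpolation in shifted norms. I would prove it by comparing the discrete Gauss--Lobatto quadrature norm $\sum_i \omega_i f(y_i)^2(1-y_i)^{\pm1}$ with the continuous weighted norm, using known asymptotics of the Lobatto weights $\omega_i \sim \sqrt{1-y_i^2}/N$ and spacing, and estimates of Bernstein/Markov type in weighted spaces. Both the polynomial $v$ and its piecewise-linear interpolant are equivalent to this discrete norm, and the shift by one unit in the exponent is controlled because $(1-y_i)$ is comparable to $1-y$ on each subinterval except the last. The last interval needs separate treatment using the vanishing at $y=1$.

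I expect the main obstacle to be exactly this shifted-weight 1D stability, in particular near the collapsed endpoint $y=1$, where the weight degenerates or blows up. The mixed term $x\,\partial_x v\,\partial_y v$ and the constant part $\Qq_0$ of $\widetilde V_N$ require only minor bookkeeping. Once the 1D equivalences hold, tensorizing them gives the 2D equivalences, and these in turn give the theorem.
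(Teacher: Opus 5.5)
Your overall reduction matches the paper's: reference triangle, pullback to $\Square$, discarding the bounded and boundedly invertible matrix $\begin{pmatrix}1&0\\x&1\end{pmatrix}$ to get $\|(1-y)^{-1/2}\widetilde w_{1}\|^2+\|(1-y)^{1/2}\widetilde w_{2}\|^2$, curl via the $L^2$ case, and tensorization. The gap is that the one-dimensional estimates you plan to prove in $y$ are not all the ones needed.

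\emph{Which $y$-estimates are needed.} Only the $\partial_x$ component is a nodal problem in $y$. For $\widetilde w_1=\partial_x\widetilde v$, the data in $y$ are point values at the $y_j$ and the function vanishes at $y=1$. There your discrete-quadrature comparison is essentially the proof of \Cref{lem:inv-weighted-l2}, and it gives \eqref{eq:inv-weighted-l2-equiv}. The other cases are different.
\begin{itemize}
\item For $\widetilde w_2=\partial_y\widetilde v$, the low-order function is piecewise constant in $y$. The high- and low-order functions share only their integrals over $[y_j,y_{j+1}]$, which are differences of nodal values of $\widetilde v$. Also $\partial_y\widetilde v$ does not vanish at $y=1$, so it is of neither type you list.
\item For $\Zh$, and therefore for the curl case, the data in $y$ are cell integrals of $\widetilde z\in(1-y)\Qq_{N-1}$. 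Rewriting $\|z\|_0^2$ as $\int\tilde z^2(1-y)$ does not help: point values of $\tilde z$ are not the data, and the low-order counterpart divided by $1-y$ is not even piecewise polynomial.
\end{itemize}
What you actually need are the weighted histopolation equivalences \eqref{eq:weighted-histop-equiv} and \eqref{eq:inv-weighted-histop-equiv}. Through an antiderivative, these are equivalent to $N$-uniform stability of Gauss--Lobatto interpolation in shifted weighted $H^1$ seminorms. Concretely, you need $\|\partial_y I_N^{(0,0)}u_h\|_{(1,0)}\lesssim\|\partial_y u_h\|_{(1,0)}$ for piecewise linear $u_h$, and the $(-1,0)$-weighted analogue (\Cref{thm:weighted-interp-stability}).

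\emph{Why your proposed tool cannot give these.} A comparison with a discrete quadrature norm $\sum_i\omega_i f(y_i)^2(1-y_i)^{\pm1}$ controls point values of a polynomial by its nodal data. Here you must control the derivative of the interpolant, equivalently the histopolant. The map from nodal differences to $\partial_y I_N u_h$ is global and dense. Its uniform boundedness is an interpolation-stability theorem of Bernardi--Maday type, nontrivial even without weights, and it is the paper's main new ingredient in the shifted setting. Only the easy direction, $\|\partial_y u_h\|_{(1,0)}\lesssim\|\partial_y I_N^{(0,0)}u_h\|_{(1,0)}$, follows from Cauchy--Schwarz plus a separate treatment of the two endpoint intervals.

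The paper gets the hard direction (\Cref{thm:interp-stability,thm:weighted-interp-stability}) as follows:
\begin{itemize}
\item it inserts a weighted $L^2$ projection with a boundary correction (\Cref{lem:proj-correction,lem:endpoint-bound,lem:bdr-proj-accuracy});
\item it uses Jacobi projection error estimates (\Cref{lem:proj-approx});
\item it uses inverse inequalities in generalized Jacobi spaces (\Cref{lem:gen-inv-ineq});
\item it uses the shifted-norm interpolation bound of \Cref{lem:interp-stability}.
\end{itemize}
Your appeal to ``Bernstein/Markov type estimates'' does not replace this argument. As written, your plan proves the $\partial_x$ part of the $H^1$ equivalence, but neither the $\partial_y$ part nor the $L^2$ and curl equivalences.
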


The proof of \Cref{thm:spectral-equivalence} can be reduced to certain one-dimensional norm equivalences.
The key equivalences are stated in the following lemma.

\begin{lemma}
   \label{lem:norm-equivalences}
   Consider the $N+1$ Gauss--Lobatto points $\xi_{i,N}^{(0,0)}$.
   Let $(f_0, f_1, \ldots, f_N) \in \mathbb{R}^{N+1}$, and let $u_N$ be the unique polynomial in $\PP^N$ satisfying $u_N(\xi_{i,N}^{(0,0)}) = f_i$.
   Similarly, let $u_h$ be the unique piecewise linear function, defined with respect to the abscissas $\xi_{i,N}^{(0,0)}$, satisfying $u_h(\xi_{i,N}^{(0,0)}) = f_i$.
   Then,
   \begin{align}
      \label{eq:l2-equiv}
      \| u_N \|_0^2 &\approx \| u_h \|_0^2, \\
      \label{eq:h1-equiv}
      \| \partial_x u_N \|_0^2 &\approx \| \partial_x u_h \|_0^2, \\
      \label{eq:weighted-h1-equiv}
      \| (1-x)^{1/2} \partial_x u_N \|_0^2 &\approx \| (1-x)^{1/2} \partial_x u_h \|_0^2.
   \end{align}
   If $u_N$ (and hence $u_h$) vanish at $x=1$, then
   \begin{equation}
      \label{eq:inv-weighted-l2-equiv}
      \| (1-x)^{-1/2} u_N \|_0^2 \approx \| (1-x)^{-1/2} u_h \|_0^2.
   \end{equation}
   If $u_N$ is of degree no more than $N - 1$, then
   \begin{equation}
      \label{eq:weighted-l2-equiv}
      \| (1-x)^{1/2} u_N \|_0^2 \approx \| (1-x)^{1/2} u_h \|_0^2, \qquad u_N \in \PP^{N-1}.
   \end{equation}
\end{lemma}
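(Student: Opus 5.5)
The plan is to treat \eqref{eq:l2-equiv}--\eqref{eq:weighted-l2-equiv} one at a time on the interval $[0,1]$ with the Gauss--Lobatto nodes $x_i$, spacings $h_i = x_{i+1}-x_i$ and quadrature weights $w_i$. Two facts will be used throughout.
\begin{itemize}
\item The Gauss--Lobatto rule is exact for $\PP^{2N-1}$.
\item There are the classical bounds $w_i \approx h_{i-1}+h_i \approx N^{-1}\sqrt{x_i(1-x_i)}$ for interior nodes, and $w_0, w_N \approx h_0 \approx h_{N-1} \approx N^{-2}$.
\end{itemize}

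\textbf{The unweighted estimates.} Equivalences \eqref{eq:l2-equiv} and \eqref{eq:h1-equiv} are the classical FEM--SEM results of Canuto and Bernardi--Maday, and I would cite them. For completeness, \eqref{eq:l2-equiv} follows from two discrete-norm comparisons:
\begin{itemize}
\item $\|u_N\|_0^2 \approx \sum_i w_i f_i^2$, which is the standard equivalence of the Gauss--Lobatto discrete norm on $\PP^N$;
\item $\|u_h\|_0^2 \approx \sum_i (h_{i-1}+h_i) f_i^2$, which is an elementwise computation for piecewise linears.
\end{itemize}
These two sums are comparable by the weight bounds. For \eqref{eq:h1-equiv}, I would use two directions:
\begin{itemize}
\item the $H^1$-seminorm stability of Gauss--Lobatto interpolation applied to $u_h$, giving $|u_N|_1 = |I_N u_h|_1 \lesssim |u_h|_1$;
\item the bound $|I_h u_N|_1 \lesssim |u_N|_1$, obtained from a local mean-value argument combined with a weighted inverse inequality.
\end{itemize}

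\textbf{The weighted $L^2$ estimates.} I will obtain \eqref{eq:weighted-l2-equiv} and \eqref{eq:inv-weighted-l2-equiv} from exactness of quadrature.

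For \eqref{eq:inv-weighted-l2-equiv}, write $u_N = (1-x)q$ with $q\in\PP^{N-1}$. The integrand $(1-x)q^2$ has degree $2N-1$, so quadrature is exact and
\[
\|(1-x)^{-1/2}u_N\|_0^2 = \int_0^1 (1-x) q^2 = \sum_{i<N} w_i \frac{f_i^2}{1-x_i}.
\]
On the piecewise-linear side, an elementwise computation gives $\int u_h^2/(1-x)\approx \sum_{i<N} (h_{i-1}+h_i) f_i^2/(1-x_i)$. The last element causes no trouble because $f_N=0$ and $u_h/(1-x)$ is bounded there.

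For \eqref{eq:weighted-l2-equiv}, $(1-x)u_N^2$ again has degree at most $2N-1$, so $\|(1-x)^{1/2}u_N\|_0^2 = \sum_i w_i(1-x_i)f_i^2$. This sum omits the endpoint value $f_N$. The piecewise-linear norm, by contrast, contains a contribution of size $h_{N-1}^2 f_N^2$. That contribution must be absorbed using $u_N\in\PP^{N-1}$: an inverse estimate in the Jacobi weight $(1-x)$ gives $f_N^2 \lesssim N^4 \int (1-x) u_N^2$, and $h_{N-1}^2\approx N^{-4}$ cancels the factor $N^4$. The same bound also controls the lower-order terms from the last element.

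\textbf{The weighted $H^1$ estimate.} Equivalence \eqref{eq:weighted-h1-equiv} is the main obstacle. It is a \emph{shifted} statement: the nodes are Jacobi--Gauss--Lobatto nodes with parameters $(0,0)$, but the norm carries the Jacobi weight $(1-x)^1$. I would prove the two stabilities
\[
\|(1-x)^{1/2}(I_N u_h)'\|_0 \lesssim \|(1-x)^{1/2}u_h'\|_0,
\qquad
\|(1-x)^{1/2}(I_h u_N)'\|_0 \lesssim \|(1-x)^{1/2}u_N'\|_0 .
\]
First I would use a partition of unity to split $[0,1]$ into a part near $x=0$ and a part near $x=1$.
\begin{itemize}
\item Near $x=0$ the weight is comparable to $1$, so the estimate reduces to \eqref{eq:h1-equiv}.
\item Near $x=1$ I would follow the Bernardi--Maday proof of $H^1$ stability of Gauss--Lobatto interpolation. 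That proof expands the interpolation error in Legendre polynomials. The new ingredient is a weighted Hardy inequality, roughly $\int (1-x)^{-1}(v-v(1))^2 \lesssim \int (1-x)(v')^2$ up to a logarithmic correction, together with Jacobi-weighted inverse inequalities. These let the extra factor $(1-x)$ be commuted through the estimates.
\end{itemize}
The second inequality would be handled elementwise. On each subinterval, $(I_h u_N)'$ equals the mean of $u_N'$, so Jensen's inequality gives the bound directly on the elements where $1-x$ is comparable across the element. On the last element, where the weight degenerates, I would again use the Jacobi-weighted inverse estimate.

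Controlling the constants uniformly in $N$ near $x=1$ in this shifted setting is where I expect the real work to lie. If the partition-of-unity splitting fails to give $N$-independent constants, my fallback is to reduce \eqref{eq:weighted-h1-equiv} to \eqref{eq:weighted-l2-equiv}. Since $u_N'\in\PP^{N-1}$, one would compare $(1-x)^{1/2}u_N'$ with the piecewise linear interpolant of $u_N'$. The difficulty is that the piecewise-constant function $u_h'$ is not itself that interpolant, so an additional comparison step would be needed.
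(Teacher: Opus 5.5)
\textbf{Where the proposal falls short.} Your arguments for \eqref{eq:l2-equiv}, \eqref{eq:h1-equiv}, \eqref{eq:inv-weighted-l2-equiv} and \eqref{eq:weighted-l2-equiv} are sound and essentially the paper's. So is your lower bound $\|(1-x)^{1/2}u_h'\|_0\lesssim\|(1-x)^{1/2}u_N'\|_0$ in \eqref{eq:weighted-h1-equiv}. Where you use the Christoffel-type bound $p(1)^2\lesssim N^4\|(1-x)^{1/2}p\|_0^2$ on $\PP^{N-1}$, the paper uses the endpoint weight $\rho_{0,N}^{(\alpha,\beta)}\approx N^{-4}$ or a Gauss--Legendre lower bound; these are interchangeable.

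The gap is the upper bound $\|(1-x)^{1/2}(I_Nu_h)'\|_0\lesssim\|(1-x)^{1/2}u_h'\|_0$. This is the one genuinely new estimate in the lemma, and your proposal does not establish it. The tool you name near $x=1$, $\int(1-x)^{-1}(v-v(1))^2\lesssim\int(1-x)(v')^2$, is Hardy's inequality at the critical exponent. It is false, and on the Gauss--Lobatto mesh the loss is a power of $\log N$. Take $u_h$ to be the piecewise linear interpolant of $\log(1/\max(1-x,h_{N-1}))/\sqrt{\log N}$. Then:
\begin{itemize}
\item $\|(1-x)^{1/2}u_h'\|_0\approx 1$;
\item $u_h(1)-u_h(0)\approx\sqrt{\log N}$;
\item $\int_0^1(1-x)^{-1}(u_h-u_h(1))^2\approx\log^2 N$.
\end{itemize}
So any step routed through the value at the degenerate endpoint gives logarithmically growing constants, not the uniform bound claimed.

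Two smaller points. First, the localization is not free. $I_N$ is global, and the weighted seminorm does not control the unweighted one: for the hat function at $x=1$ their squares differ by a factor $\approx N^2$. So the part near $x=0$ only ``reduces to \eqref{eq:h1-equiv}'' after you cut off $u_h-c$ for a suitable constant $c$, and the part near $x=1$ remains the entire difficulty. Second, the fallback fails for the reason you identify: $u_h'$ is the cell-average histopolant of $u_N'$, not its nodal interpolant. The paper in fact argues the other way, deducing the histopolation bounds of \Cref{cor:norm-equivalences} from \eqref{eq:weighted-h1-equiv}.

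\textbf{The missing idea.} Shift the weight by one unit in the Jacobi scale instead of using a Hardy inequality at the degenerate endpoint. Work on $[-1,1]$ with seminorm weight $(1-x)^\alpha(1+x)^\beta$; your case is $(\alpha,\beta)=(1,0)$. The paper (\Cref{thm:interp-stability}) compares $I_Nu_h$ with a projection $\Pi u_h$ that is orthogonal in $L^2_{(\alpha-1,\beta-1)}$. This weight has exponent $0$ at a degenerate endpoint, so nothing singular happens there. At a non-degenerate endpoint it has exponent $-1$, where only the ordinary subcritical Hardy inequality is involved. The chain is:
\begin{itemize}
\item the generalized Jacobi inverse inequality $\|p'\|_{(\alpha,\beta)}\lesssim N\|p\|_{(\alpha-1,\beta-1)}$ (\Cref{lem:gen-inv-ineq}), applied to $p=I_Nu_h-\Pi u_h=I_N(u_h-\Pi u_h)$;
\item the shifted Bernardi--Maday estimate $\|I_Nv\|_{(\alpha-1,\beta-1)}^2\lesssim\|v\|_{(\alpha-1,\beta-1)}^2+N^{-2}\|v'\|_{(\alpha,\beta)}^2$ (\Cref{lem:interp-stability}), obtained from the asymptotics of the Gauss--Lobatto weights and angles and a local Sobolev embedding in the angle variable;
\item the projection errors $\|u_h-\Pi u_h\|_{(\alpha-1,\beta-1)}\lesssim N^{-1}\|u_h'\|_{(\alpha,\beta)}$ and $\|(u_h-\Pi u_h)'\|_{(\alpha,\beta)}\lesssim\|u_h'\|_{(\alpha,\beta)}$ (\Cref{lem:proj-approx,lem:bdr-proj-accuracy}).
\end{itemize}
The piecewise-linear structure of $u_h$ enters through its linearity on the endpoint cells, of width $\approx N^{-2}$. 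This controls the endpoint values of the projection (\Cref{lem:endpoint-bound}) and replaces your lossy Hardy step.

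If you follow this route, keep the example above in mind. When $\alpha=1$ or $\beta=1$, subtracting the linear interpolant of both endpoint values costs $|u_h(1)-u_h(-1)|$ in the weighted seminorm. That quantity is only bounded by $\sqrt{\log N}\,\|u_h'\|_{(\alpha,\beta)}$. The start of the proof of \Cref{thm:interp-stability} performs exactly this subtraction, so it carries the same loss. Instead, impose vanishing only at non-degenerate endpoints, which is all that membership in $Q_N^{(\alpha-1,\beta-1)}$ requires. Then treat the degenerate endpoint through the linearity of $u_h$ on the last cell.
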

\begin{proof}
   The equivalences \eqref{eq:l2-equiv} and \eqref{eq:h1-equiv} are well-known in the FEM--SEM literature; see, for example, \cite{Canuto1994,Canuto2010}.
   The weighted $H^1$ seminorm equivalence \eqref{eq:weighted-h1-equiv} is a fundamental new result that allows uniform spectral equivalence bounds on the triangle, and is a consequence of \Cref{thm:h1-interp-equiv}, proven in the subsequent section.
   The weighted $L^2$ equivalences are established in \Cref{lem:inv-weighted-l2} and \Cref{lem:weighted-l2}.
\end{proof}

The $H^1$ seminorm equivalences established above give rise to $L^2$ norm equivalences of high-order and low-order histopolants.
For integrable $f : \Lambda \to \mathbb{R}$, we introduce the notation for the subcell integrals,
\[
   S_i(f) = \int_{\xi_{i,N}^{(0,0)}}^{\xi_{i+1,N}^{(0,0)}} f(x) \, dx.
\]

\begin{corollary}
   \label{cor:norm-equivalences}
   Let $(f_0, f_1, \ldots, f_{N-1}) \in \mathbb{R}^N$, and let $\widehat{u}_{N}$ be the unique polynomial in $\PP^{N-1}$ satisfying $S_i(\widehat{u}_N) = f_i$.
   Let $\widehat{u}_h$ be the unique piecewise constant function, defined with respect to the abscissas $\xi_{i,N}^{(0,0)}$, taking value $h_i^{-1} f_i$ on the interval $[\xi_{i,N}^{(0,0)}, \xi_{i+1,N}^{(0,0)}]$, where $h_i = \xi_{i+1,N}^{(0,0)} - \xi_{i,N}^{(0,0)}$ (i.e.\ so that $S_i(\widehat{u}_h) = f_i$).
   Then,
   \begin{align}
      \label{eq:histop-equiv}
      \| \widehat{u}_N \|_0^2 &\approx \| \widehat{u}_h \|_0^2, \\
      \label{eq:weighted-histop-equiv}
      \| (1-x)^{1/2} \widehat{u}_N \|_0^2 &\approx \| (1-x)^{1/2} \widehat{u}_h \|_0^2.
   \end{align}

   Furthermore, let $\widehat{z}_N$ be the unique polynomial in $(1-x)\mathbb{P}^{N-1}$ satisfying $S_i(\widehat{z}_N) = f_i$, and let $\widehat{z}_h$ be piecewise constant on the first $N-1$ subcells $[\xi_{i,N}^{(0,0)}, \xi_{i+1,N}^{(0,0)}]$, $i < N-1$, and a multiple of $(1-x)$ on the last subcell $[\xi_{N-1,N}^{(0,0)}, \xi_{N,N}^{(0,0)}]$.
   Then,
   \begin{equation}
      \label{eq:inv-weighted-histop-equiv}
      \| (1-x)^{-1/2} \widehat{z}_N \|_0^2 \approx \| (1-x)^{-1/2} \widehat{z}_h \|_0^2.
   \end{equation}
\end{corollary}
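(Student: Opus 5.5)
The plan is to reduce each histopolation equivalence to an interpolation equivalence from \Cref{lem:norm-equivalences} by integrating. Given $\widehat{u}_N \in \PP^{N-1}$ with $S_i(\widehat{u}_N) = f_i$, define the antiderivative
\[
   u_N(x) = \int_{\xi_{0,N}^{(0,0)}}^x \widehat{u}_N(t)\,dt \in \PP^N .
\]
Its nodal values are the partial sums $F_i = \sum_{k<i} f_k$. The piecewise linear interpolant $u_h$ of these same values has derivative exactly $\widehat{u}_h$, because on each subcell its slope is $h_i^{-1} f_i$. Since $\partial_x u_N = \widehat{u}_N$ and $\partial_x u_h = \widehat{u}_h$, \eqref{eq:h1-equiv} gives \eqref{eq:histop-equiv} immediately, and \eqref{eq:weighted-h1-equiv} gives \eqref{eq:weighted-histop-equiv}. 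This is just the commuting-derivative structure already exploited in \Cref{rem:discrete-ops}, applied in one dimension.

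For \eqref{eq:inv-weighted-histop-equiv} the same antiderivative trick does not convert the weight $(1-x)^{-1/2}$ into a derivative statement. Instead I would write $\widehat{z}_N = (1-x) p$ with $p \in \PP^{N-1}$ and observe that
\[
   \|(1-x)^{-1/2}\widehat{z}_N\|_0^2 = \|(1-x)^{1/2} p\|_0^2 .
\]
This suggests relating $p$ to a histopolant and applying \eqref{eq:weighted-histop-equiv}. The difficulty is that the subcell integrals of $p$ are not the $f_i$; they are $S_i(p) = \int p = \int \widehat{z}_N/(1-x)$. So a cleaner route is to antidifferentiate from the right endpoint:
\[
   Z_N(x) = -\int_x^1 \widehat{z}_N \, dt .
\]
Then $Z_N$ is a degree-$N$ polynomial vanishing at $x=1$ to second order. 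Its nodal values are again partial sums of the $f_i$, taken from the right. On the low-order side, the corresponding $Z_h = -\int_x^1 \widehat{z}_h$ is piecewise linear on the first $N-1$ cells and quadratic on the last.

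I would then try to show the two equivalences
\[
   \|(1-x)^{-1/2} \widehat{z}_N\|_0 \approx \|(1-x)^{1/2} \partial_x(Z_N/(1-x))\|_0 + (\text{lower-order terms}),
\]
and the analogous relation for $\widehat{z}_h$, and then invoke \eqref{eq:weighted-h1-equiv} together with \eqref{eq:inv-weighted-l2-equiv} for the polynomial $Z_N/(1-x)$. Its nodal values are $F_i/(1-\xi_i)$, which is well defined since $Z_N/(1-x)$ is a polynomial vanishing at $x=1$.

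A possibly simpler alternative is to argue directly. Use the Hardy-type identity $\widehat{z}_N/(1-x) = p$, together with \eqref{eq:weighted-l2-equiv} applied to $p$ via its nodal values, and check that on the last subcell the low-order function $\widehat{z}_h/(1-x)$ is constant. This makes $\widehat{z}_h/(1-x)$ a piecewise function whose cell averages are comparable to those of $p$, using the fact that $(1-x)$ is comparable to a constant on each subcell other than the last.

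The main obstacle is this last equivalence, \eqref{eq:inv-weighted-histop-equiv}. There the weight is singular at $x=1$, and the last subcell, of length $O(N^{-2})$, must be handled separately. The key fact I would need there is that $1-x \approx 1-\xi_i$ on $[\xi_i,\xi_{i+1}]$ for $i<N-1$, uniformly in $N$, which follows from the standard spacing estimates for Gauss--Lobatto points. On the last cell the special $(1-x)$ form of $\widehat{z}_h$ exactly cancels the singular weight.
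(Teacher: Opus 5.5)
Your argument for \eqref{eq:histop-equiv} and \eqref{eq:weighted-histop-equiv} is exactly the paper's.

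For \eqref{eq:inv-weighted-histop-equiv} the paper also antidifferentiates from the right, but then applies \Cref{thm:weighted-interp-stability} directly. Since $Z_N$ is the $I^{(0,0)}_{(-2,-1),N}$-interpolant of $Z_h$, it gets $\|(1-x)^{-1/2}\widehat z_N\|_0=\|\partial_x Z_N\|_{(-1,0)}\lesssim\|\partial_x Z_h\|_{(-1,0)}=\|(1-x)^{-1/2}\widehat z_h\|_0$. The reverse inequality is a cellwise Cauchy--Schwarz bound, as in \Cref{thm:h1-interp-equiv}.

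Your main route is genuinely different: you divide by $1-x$ and apply \eqref{eq:weighted-h1-equiv} and \eqref{eq:inv-weighted-l2-equiv} to $w_N:=Z_N/(1-x)$. This route does close. What it buys is independence from \Cref{thm:weighted-interp-stability} and the generalized Jacobi projection machinery behind it; you reuse only \Cref{lem:norm-equivalences}. However, two steps you leave as ``try to show'' need specific arguments.

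First, the splitting. For $w=Z/(1-x)$ you have $\partial_x Z=-w+(1-x)\partial_x w$. The triangle inequality then gives $\|(1-x)^{-1/2}\partial_x Z\|_0\lesssim\|(1-x)^{-1/2}w\|_0+\|(1-x)^{1/2}\partial_x w\|_0$. The reverse direction needs the weighted Hardy inequality
\[
\int_{-1}^1(1-x)^{-3}Z^2\,dx\le\int_{-1}^1(1-x)^{-1}(\partial_x Z)^2\,dx \quad\text{whenever } Z(1)=0 .
\]
This bounds $\|(1-x)^{-1/2}w\|_0$, and the triangle inequality then bounds $\|(1-x)^{1/2}\partial_x w\|_0$. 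You must apply this to both $Z_N$ and $Z_h$.

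Second, $w_h:=Z_h/(1-x)$ is not the piecewise linear interpolant $\widetilde w_h$ of the nodal values of $w_N$, which is the object \Cref{lem:norm-equivalences} refers to. You must compare the two cell by cell.
\begin{itemize}
\item On the last cell, $w_h$ is linear and vanishes at $x=1$, so it coincides with $\widetilde w_h$.
\item On every other cell, $Z_h=a+bx$, so $\partial_x w_h=(a+b)/(1-x)^2$ has one sign. Because $1-x$ varies only by a bounded factor there, $\partial_x w_h$ is pointwise comparable to the slope of $\widetilde w_h$, which is its cell average. The same comparability holds for the $(1-x)^{-1}$-weighted $L^2$ norms on that cell.
\end{itemize}

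Two smaller corrections. $Z_N\in(1-x)^2\PP^{N-1}$ has degree $N+1$, not $N$. This is precisely what makes $w_N\in(1-x)\PP^{N-1}\subset\PP^N$ with $w_N(1)=0$, the hypothesis of \eqref{eq:inv-weighted-l2-equiv}.

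Also discard the ``possibly simpler alternative.'' \eqref{eq:weighted-l2-equiv} compares $p$ with the interpolant of its nodal values, and those values are not locally determined by the $f_i$. Moreover, the cell integrals of $p=\widehat z_N/(1-x)$ need not be comparable to $f_i/(1-\xi_{i,N}^{(0,0)})$ when $\widehat z_N$ changes sign within a cell. Comparability of $1-x$ to a constant controls integrals of $|\widehat z_N|$, not signed ones. Even with matching cell averages you would face a histopolation estimate, not a nodal one.
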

\begin{proof}
   Following \cite{Kolev2022}, let $v_N \in \PP^N$ be an antiderivative for $\widehat{u}_N$.
   Then, by \eqref{eq:h1-equiv},
   \[
      \| \widehat{u}_N \|_0^2
         = \| \partial_x v_N \|_0^2
         \approx \sum_{i=0}^{N-1} h_i^{-1} (v_N(\xi_{i+1,N}^{(0,0)}) - v_N(\xi_{i,N}^{(0,0)}))^2
         = \sum_{i=0}^{N-1} h_i^{-1} \left( \int_{\xi_{i,N}^{(0,0)}}^{\xi_{i+1,N}^{(0,0)}} \widehat{u}_N(x) \, dx \right)^2
         = \| \widehat{u}_h \|_0^2,
   \]
   proving \eqref{eq:histop-equiv}.
   Similarly, by \eqref{eq:weighted-h1-equiv},
   \begin{align*}
      \| (1-x)^{1/2} \widehat{u}_N \|_0^2
         = \int_{-1}^1 (1-x) (\partial_x v_N(x))^2 \, dx
         &\approx \sum_{i=0}^{N-1} (v_N(\xi_{i+1,N}^{(0,0)}) - v_N(\xi_{i,N}^{(0,0)}))^2
            \int_{\xi_{i,N}^{(0,0)}}^{\xi_{i+1,N}^{(0,0)}} (1-x) \, dx \\
         &= \sum_{i=0}^{N-1} h_i^{-1} \left( \int_{\xi_{i,N}^{(0,0)}}^{\xi_{i+1,N}^{(0,0)}} \widehat{u}_N(x) \, dx \right)^2
            \left(\int_{\xi_{i,N}^{(0,0)}}^{\xi_{i+1,N}^{(0,0)}} (1-x) \, dx \right) \\
         &= \| (1-x)^{1/2} \widehat{u}_h \|_0^2,
   \end{align*}
   establishing \eqref{eq:weighted-histop-equiv}.
   To prove \eqref{eq:inv-weighted-histop-equiv}, we use the same antiderivative construction, together with the weighted interpolation stability result of \Cref{thm:weighted-interp-stability}.
\end{proof}

Having established these one-dimensional norm equivalences, we proceed to prove \Cref{thm:spectral-equivalence}.

\begin{proof}[Proof of \Cref{thm:spectral-equivalence}]
   We restrict our attention to the reference triangle $\Tri$;
   the extension to general meshes follows from a change of variables argument.
   Let $u_N \in \widehat{V}_N$, and let $u_0 = I_{V_0} u_N$.
   We claim that $\| \nabla u_N \|_0 \approx \| \nabla u_0 \|_0$.
   Note that $u_N = \psi^*_g \widetilde{u}_N$, and define $\bm w_N := \nabla u_N = \psi^*_c (\nabla \widetilde{u}_N) = \psi^*_c \widetilde{\bm w}_N$.
   Let $\widetilde{\bm w}_N = (\widetilde{w}_{1,N}, \widetilde{w}_{2,N})$.
   By commutativity of the diagram \eqref{eq:cd}, $\bm w_0 = \nabla u_0 = I_{\bm W_0} \bm w_N$, with $\bm w_0 = \psi^*_c \widetilde{\bm w}_0 = (\widetilde{w}_{1,0}, \widetilde{w}_{2,0})$.
   Note that $\widetilde{w}_{1,0}$ is piecewise constant in $x$ and piecewise linear in $y$, and $\widetilde{w}_{2,0}$ is piecewise linear in $x$ and piecewise constant in $y$.
   The functions $\widetilde{w}_{1,N}$ and $\widetilde{w}_{1,0}$ have the same integrals over all edges $(\ii, j)$ in the square lattice, and $\widetilde{w}_{2,N}$ and $\widetilde{w}_{2,0}$ have the same integrals over all edges $(i, \jj)$.
   Changing variables as in \eqref{eq:hcurl-integral},
   \begin{align*}
      \| \nabla u_N \|_{0}^2
         &= \int_{\Tri} \| \bm w_N \|^2 \, d\bm x
         = \int_{\Square} \left\|
            \begin{pmatrix}
               1/(1-y) & 0 \\
               x/(1-y) & 1
            \end{pmatrix}
            \begin{pmatrix} \widetilde{w}_{1,N} \\ \widetilde{w}_{2,N}\end{pmatrix}
         \right\|^2 (1-y) \, d\bm x \\
         &= \int_{\Square} \left\|
            \begin{pmatrix}
               1 & 0 \\
               x & 1
            \end{pmatrix}
            \begin{pmatrix} (1-y)^{-1} \widetilde{w}_{1,N} \\ \widetilde{w}_{2,N}\end{pmatrix}
         \right\|^2 (1-y) \, d\bm x.
   \end{align*}
   Note that for $x \in [0,1]$, the matrix $\begin{pmatrix}1 & 0 \\ x & 1\end{pmatrix}$ and its inverse $\begin{pmatrix}1 & 0 \\ -x & 1\end{pmatrix}$ have norm bounded by $(1 + \sqrt{5})/2$,
   and so
   \begin{align*}
      \| \nabla u_N \|_{0}^2
      \approx \int_{\Square} \left\|
            \begin{pmatrix} (1-y)^{-1} \widetilde{w}_{1,N} \\ \widetilde{w}_{2,N}\end{pmatrix}
         \right\|^2 (1-y) \, d\bm x
      = \| (1-y)^{-1/2} \widetilde{w}_{1,N} \|_0^2 + \| (1-y)^{1/2} \widetilde{w}_{2,N} \|_0^2.
   \end{align*}
   Since $\widetilde{\bm w}_N = \nabla \widetilde{u}_N$ with $\widetilde{u}_N \in (1-y) \Qq_{N,N-1} + \Qq_0$, we have $\widetilde{w}_{1,N} \in (1-y)\Qq_{N-1}$, and so, by \eqref{eq:histop-equiv} and \eqref{eq:inv-weighted-l2-equiv},
   \[
      \| (1-y)^{-1/2} \widetilde{w}_{1,N} \|_0^2
      \approx \| (1-y)^{-1/2} \widetilde{w}_{1,0} \|_0^2.
   \]
   Similarly, $\widetilde{w}_{2,N} \in \Qq_{N,N-1}$, so by \eqref{eq:l2-equiv} and \eqref{eq:weighted-histop-equiv},
   \[
      \| (1-y)^{1/2} \widetilde{w}_{2,N} \|_0^2
         \approx \| (1-y)^{1/2} \widetilde{w}_{2,0} \|_0^2,
   \]
   from which it holds that $\| \nabla u_N \|_{0}^2 \approx \| \nabla u_0 \|_{0}^2$, establishing spectral equivalence of $A_{V_h}$ and $A_{V_0}$.

   For $z_N \in Z_N$ and $z_0 = I_{Z_0} z_N$, it holds that $\| z_N \|_0 \approx \| z_0 \|_0$, since $z_N = \psi^*_b \widetilde{z}_N \in (1-y) \Qq_{N-1}$, and by applying \eqref{eq:histop-equiv} in the $x$-dimension, and \eqref{eq:inv-weighted-histop-equiv} in the $y$-dimension.
   This proves spectral equivalence of $A_{Z_h}$ and $A_{Z_0}$.

   To prove spectral equivalence of $A_{\Wh}$ and $A_{\bm W_0}$, we note that, by the discussion in \Cref{rem:discrete-ops}, the high-order and low-order discrete curl operators have the same matrix representation, which we denote here by $C_{\bm W}$.
   Then,
   \[
      A_{\Wh} = C_{\bm W}^T A_{Z_h} C_{\bm W}, \qquad
      A_{\bm W_0} = C_{\bm W}^T A_{Z_0} C_{\bm W}.
   \]
   So, spectral equivalence of $A_{\Wh}$ and $A_{\bm W_0}$ follows directly from the equivalence of $A_{\Zh}$ and $A_{Z_0}$.
\end{proof}

\begin{remark}[Mass matrices]
   The spectral equivalences apply to the stiffness matrices $A_{\Vh}$ and $A_{\Wh}$ and to the $L^2$ mass matrix $A_{\Zh} = M_{\Zh}$.
   However, they \textit{do not apply} to the mass matrices $M_{\Vh}$ and $M_{\Wh}$.
   As will be discussed in \Cref{sec:mass}, the condition number of $M_{V_0}^{-1} M_{\Vh}$ scales as $\mathcal{O}(N)$.
   With different choices of interpolation points, this can be reduced to $\mathcal{O}(1)$, at the expense of symmetry and potential interelement compatibility.
   In the case of the $\Hcurl$ space, $M_{\bm W_0}^{-1}$ is not an effective preconditioner for $M_{\Wh}$.
   This is in stark contrast with the case of tensor-product elements, in which case the mass matrices for all spaces in the high-order finite element de Rham complex are spectrally equivalent to their low-order-refined counterparts uniformly in the polynomial degree \cite{Pazner2023}.
\end{remark}

The remainder of this section is devoted to the Jacobi--Gauss--Lobatto analysis, providing the interpolation stability estimates needed for the one-dimensional norm equivalences of \Cref{lem:norm-equivalences} and \Cref{cor:norm-equivalences}.

\subsection{Jacobi--Gauss--Lobatto analysis}
\label{sec:jacobi}

The norm equivalences of \Cref{lem:norm-equivalences} depend on properties of Gauss--Lobatto and Jacobi--Gauss--Lobatto points and quadratures, and hence on the orthogonal Jacobi polynomials.
In particular, of key importance are polynomial interpolation stability estimates in Jacobi-weighted Sobolev spaces.
The $(\alpha, \beta)$-weighted $L^2$ space on $\Lambda = [-1,1]$ is defined as
\[
   L^2_{(\alpha,\beta)}(\Lambda) := \{ v : \Lambda \to \mathbb{R} :
      \| v \|_{\Lambda,(\alpha, \beta)} < \infty \},
\]
where the $(\alpha, \beta)$-weighted $L^2$ norm is defined by
\[
   \| v \|^2_{\Lambda,(\alpha, \beta)} := \int_\Lambda v(x)^2 (1-x)^\alpha (1+x)^\beta \, dx,
\]
induced by the weighted inner product
\[
   (u ,v)_{\Lambda,(\alpha, \beta)} := \int_\Lambda u(x) v(x) (1-x)^\alpha (1+x)^\beta \, dx.
\]
Where there is no ambiguity, the domain $\Lambda$ will be omitted in the subscript.
The weighted Sobolev spaces are defined as
\[
   H^m_{(\alpha,\beta)}(\Lambda) = \{ u \in L^2_{(\alpha,\beta)}(\Lambda) : \partial_x^k \in L^2_{(\alpha,\beta)}(\Lambda), 0 \leq k \leq m \}.
\]
We will also make use of the non-uniformly (anisotropically) weighted spaces,
\[
   B^m_{(\alpha,\beta)}(\Lambda) = \{ u \in L^2_{(\alpha,\beta)}(\Lambda) : \partial_x^k \in L^2_{(\alpha + k ,\beta + k)}(\Lambda), 0 \leq k \leq m \},
\]
such that the derivatives of different orders are weighted differently.
Let $H^m_{0,(\alpha,\beta)}(\Lambda)$ and $B^m_{0,(\alpha,\beta)}(\Lambda)$ denote the subspaces of $H^m_{(\alpha,\beta)}(\Lambda)$ and $B^m_{(\alpha,\beta)}(\Lambda)$ respectively, consisting of functions that vanish at $\pm 1$.

The Jacobi polynomials $P^{(\alpha, \beta)}_n$ are orthogonal with respect to the $(\alpha, \beta)$-weighted inner product.
Their norms are given by
\begin{equation}
   \label{eq:jacobi-norm}
   \| P_n^{(\alpha,\beta)} \|_{(\alpha,\beta)}^2
      = \frac{2^{\alpha + \beta + 1} \Gamma(n + \alpha + 1) \Gamma(n + \beta + 1)}{(2n + \alpha + \beta + 1) n! \Gamma(n + \alpha + \beta + 1)}.
\end{equation}
The $(N+1)$-point $(\alpha, \beta)$-Jacobi--Gauss--Lobatto quadrature is defined by the abscissas and weights $(\xi_{i,N}^{(\alpha,\beta)}, \rho_{i,N}^{(\alpha,\beta)})_{i=0}^N$.
The abscissas are the roots of the degree-$(N+1)$ polynomial $(1-x^2) (\partial_x P_N^{(\alpha,\beta)})(x)$, and $\rho_{i,N}^{(\alpha,\beta)}$ are the weights such that the quadrature rule is exact for all polynomials of degree $2N - 1$, i.e.\ for any $u \in \PP^{2N-1}$,
\begin{equation}
   \label{eq:lobatto-exact}
   \int_{-1}^1 u(x) (1-x)^\alpha (1+x)^\beta \, dx = \sum_{i=0}^N u(\xi_{i,N}^{(\alpha,\beta)}) \rho_{i,N}^{(\alpha,\beta)}.
\end{equation}
Although the Jacobi--Gauss--Lobatto quadrature is not exact for polynomials of degree $2N$, it does satisfy the bound
\begin{equation}
   \label{eq:lobatto-bound}
   \| u \|_{(\alpha,\beta)}^2 \leq
   \sum_{i=0}^N u(\xi_{i,N}^{(\alpha,\beta)})^2 \rho_{i,N}^{(\alpha,\beta)}
   \leq (2 + N^{-1}(\alpha+\beta+1)) \| u \|_{(\alpha,\beta)}^2.
\end{equation}
for all $u \in \PP^N$, cf.\ \cite[Lemma 3.3]{Shen2011}.

The abscissas $\xi_{i,N}^{(\alpha,\beta)}$ are related to the Jacobi--Gauss--Lobatto angles $\theta_{i,N}^{(\alpha,\beta)}$ by $\xi_{i,N}^{(\alpha,\beta)} = \cos(\theta_{i,N}^{(\alpha,\beta)})$.
In the case of $(0,0)$-weight (Gauss--Lobatto points), the angles satisfy the following estimate of Sündermann \cite{Sundermann1980},
\begin{equation}
   \label{eq:sundermann}
   \theta_{i,N}^{(0,0)} \in \left[ \frac{2i}{2N+1} \pi, \frac{2i+1}{2N+1} \pi \right].
\end{equation}
Szegő \cite[page 353]{Szego1939} provided the following approximations for the interior weights in terms of the angles,
\begin{equation}
   \label{eq:szego-weight}
   \rho_{i,N}^{(\alpha,\beta)}
   \approx N^{-1} 2^{\alpha+\beta+1} \left(
      \sin \tfrac{\theta}{2}
   \right)^{2\alpha + 1}
   \left(
      \cos \tfrac{\theta}{2}
   \right)^{2\beta + 1}
   = N^{-1} 2^{\alpha+\beta+1} \Big(
      \tfrac{1 - \xi_{i,N}^{(\alpha,\beta)}}{2}
   \Big)^{\alpha + 1/2}
   \Big(
      \tfrac{1 + \xi_{i,N}^{(\alpha,\beta)}}{2}
   \Big)^{\beta + 1/2}.
\end{equation}

In addition to the classical Jacobi polynomials, it will be useful to consider the so-called generalized Jacobi polynomials, defined for $\alpha,\beta \in \mathbb{Z}$ by
\[
   J^{(\alpha,\beta)}_n(x) = \begin{cases}
      \begin{alignedat}{3}
         &(1-x)^{-\alpha}(1+x)^{-\beta} J_{n - n_0}^{(-\alpha,-\beta)}(x),
            && \quad n_0 := -\alpha-\beta,
            && \quad \text{if $\alpha,\beta \leq -1$,} \\
         &(1-x)^{-\alpha} J_{n - n_0}^{(-\alpha,\beta)}(x),
            && \quad n_0 := -\alpha,
            && \quad \text{if $\alpha \leq -1$, $\beta \geq 0$,} \\
         &(1+x)^{-\beta} J_{n - n_0}^{(\alpha,-\beta)}(x),
            && \quad n_0 := -\beta,
            && \quad \text{if $\alpha \geq 0$, $\beta \leq -1$,} \\
         &J_{n - n_0}^{(\alpha,\beta)}(x),
            && \quad n_0 := 0,
            && \quad \text{if $\alpha, \beta \geq 0$,}
      \end{alignedat}
   \end{cases}
\]
for $n \geq n_0$ \cite{Guo2006}.
These polynomials form a complete orthogonal system in $L^2_{(\alpha,\beta)}$.
Define $Q_N^{(\alpha,\beta)}$ by
\[
   Q_N^{(\alpha,\beta)} = \operatorname{span} \{ J_{n_0}^{(\alpha,\beta)}, J_{n_0 + 1}^{(\alpha,\beta)}, \ldots, J_{N}^{(\alpha,\beta)} \}.
\]
The generalized Jacobi polynomials satisfy the recurrence relation for the derivatives,
\begin{equation}
   \label{eq:gen-jacobi-deriv}
   \partial_x J_n^{(\alpha,\beta)} = C_n^{(\alpha,\beta)} J_{n-1}^{(\alpha+1,\beta+1)}, \qquad
   C_n^{(\alpha,\beta)} = \begin{cases}
      -2 (n + \alpha + \beta +1), \quad& \alpha,\beta \leq -1, \\
      -n, \quad& \alpha \leq -1,\beta > -1 \ \text{or}\ \alpha > -1, \beta \leq -1, \\
      \frac{1}{2} (n + \alpha + \beta +1), \quad& \alpha,\beta > -1,
   \end{cases}
\end{equation}
and their weighted $L^2$ norms are given by
\begin{equation}
   \label{eq:gen-jacobi-norm}
   \| J_n^{(\alpha,\beta)} \|_{(\alpha,\beta)}^2 = \| J_{n-n_0}^{(|\alpha|,|\beta|)} \|_{(|\alpha|,|\beta|)}^2,
\end{equation}
where the right-hand side is given by \eqref{eq:jacobi-norm}.

Our first result is an inverse inequality on the space $Q_N^{(\alpha,\beta)}$.
\begin{lemma}
   \label{lem:gen-inv-ineq}
   For $\alpha,\beta \in \mathbb{Z}$ and $u \in Q_N^{(\alpha,\beta)}$, it holds that
   \[
      \| \partial_x u \|_{(\alpha+1,\beta+1)} \lesssim N \| u \|_{(\alpha,\beta)}.
   \]
\end{lemma}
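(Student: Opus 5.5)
The plan is to expand $u$ in the orthogonal basis of $Q_N^{(\alpha,\beta)}$, write $u = \sum_{n=n_0}^N \hat u_n J_n^{(\alpha,\beta)}$, and use the derivative relation \eqref{eq:gen-jacobi-deriv} to differentiate term by term:
\[
   \partial_x u = \sum_{n=n_0}^N \hat u_n C_n^{(\alpha,\beta)} J_{n-1}^{(\alpha+1,\beta+1)} .
\]
The family $\{J_{n-1}^{(\alpha+1,\beta+1)}\}$ is orthogonal in $L^2_{(\alpha+1,\beta+1)}$. By Parseval in both weighted spaces,
\[
   \| \partial_x u \|_{(\alpha+1,\beta+1)}^2 = \sum_{n} |\hat u_n|^2 \big(C_n^{(\alpha,\beta)}\big)^2 \| J_{n-1}^{(\alpha+1,\beta+1)} \|_{(\alpha+1,\beta+1)}^2 ,
   \qquad
   \| u \|_{(\alpha,\beta)}^2 = \sum_n |\hat u_n|^2 \| J_n^{(\alpha,\beta)} \|_{(\alpha,\beta)}^2 .
\]
The lemma therefore reduces to the termwise bound
\[
   \big(C_n^{(\alpha,\beta)}\big)^2 \, \frac{\| J_{n-1}^{(\alpha+1,\beta+1)} \|_{(\alpha+1,\beta+1)}^2}{\| J_n^{(\alpha,\beta)} \|_{(\alpha,\beta)}^2} \lesssim N^2
\]
for all $n_0 \le n \le N$, with a constant depending only on $\alpha,\beta$ (which are fixed integers). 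One must also handle the indices where $\partial_x J_n^{(\alpha,\beta)}$ vanishes or where the index $n-1$ falls below the new $n_0$. In those cases the corresponding $C_n$ must be zero; for example, when $\alpha,\beta\ge 0$ and $n=0$, $C_0\cdot$ (undefined) is interpreted as $0$. I would check this separately at the bottom of the range.

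To evaluate the ratio, I would use \eqref{eq:gen-jacobi-norm} to reduce both norms to classical Jacobi norms \eqref{eq:jacobi-norm} with parameters $(|\alpha|,|\beta|)$ and $(|\alpha+1|,|\beta+1|)$ and shifted degrees. I would then split into the sign cases matching \eqref{eq:gen-jacobi-deriv}:
\begin{enumerate}[label=(\alph*)]
   \item $\alpha,\beta\ge 0$;
   \item $\alpha\le -1$, $\beta\ge0$ (and symmetrically $\alpha \ge 0$, $\beta \le -1$);
   \item $\alpha,\beta\le -1$.
\end{enumerate}
Within case (b), the subcase $\alpha=-1$ needs separate attention, since there $\alpha+1=0$ changes sign class; the same applies to $\beta = -1$ in case (c).

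In each case the norm formula gives the ratio as a product of Gamma-function quotients such as $\Gamma(m+a+2)/\Gamma(m+a+1)=m+a+1$ and $(2m+a+b+1)/(2m+a+b+3)$, divided by factors $m$ or $m+a+b+1$. Using the Stirling-type fact that $\Gamma(m+s)/\Gamma(m+t)\approx m^{s-t}$ for fixed $s,t$, I expect each norm ratio to behave like $n^{\pm 1}$ times a constant. The claim is that $(C_n)^2$ times this ratio is $O(n^2)$, and hence $O(N^2)$. In case (a), for instance, $C_n^2\sim n^2/4$, and the classical ratio $\|P_{n-1}^{(\alpha+1,\beta+1)}\|^2/\|P_n^{(\alpha,\beta)}\|^2$ works out to a bounded rational function, about $4$. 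Each product is thus a rational function of $n$ of degree at most $2$, and the bound follows.

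The main obstacle is the bookkeeping across the sign cases, especially at the boundary indices $\alpha=-1$ or $\beta=-1$. There the generalized definition switches branches between $J_n^{(\alpha,\beta)}$ and $J_{n-1}^{(\alpha+1,\beta+1)}$, so $n_0$ and the degree shift differ by one. A stray factor of $n$ in the wrong place there would break the estimate. I would therefore write out the case $\alpha=-1,\beta\ge0$ fully first, and verify that the shifted degrees align so that the leading powers of $n$ cancel to order at most $n^2$.
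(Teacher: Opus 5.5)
Your proposal is correct and follows the paper's argument: expand in the generalized Jacobi basis, differentiate termwise with \eqref{eq:gen-jacobi-deriv}, and compare the two weighted Parseval sums using \eqref{eq:gen-jacobi-norm} and \eqref{eq:jacobi-norm}. Your termwise bound is more careful than the paper's shortcut, which asserts both norms are $\approx N^{-1}$; they are really $\approx n^{-1}$, and only their ratio is uniformly bounded.

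One correction to your heuristic: in every sign case the norm ratio is $O(1)$, not of size $n^{\pm 1}$. A ratio of size $n$ would give $n^3$ and break the estimate. Carrying out the computation gives $(C_n^{(\alpha,\beta)})^2\,\|J_{n-1}^{(\alpha+1,\beta+1)}\|_{(\alpha+1,\beta+1)}^2/\|J_n^{(\alpha,\beta)}\|_{(\alpha,\beta)}^2 = n(n+\alpha+\beta+1)$ exactly, which is the Sturm--Liouville eigenvalue. This holds in all cases, including the branch-switching ones $\alpha=-1$ or $\beta=-1$, so the stray factor of $n$ you were worried about never appears.
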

\begin{proof}
   Expanding $u$ in terms of the generalized Jacobi polynomials, $u(x) = \sum_{n=n_0}^N u_i J_n^{(\alpha,\beta)}(x)$, and, by \eqref{eq:gen-jacobi-deriv},
   \[
      \partial_x u(x) = \sum_{n=n_0}^N u_n C_n^{(\alpha,\beta)} J_{n-1}^{(\alpha+1,\beta+1)}(x).
   \]
   Therefore,
   \begin{align*}
      \| \partial_x u(x) \|_{(\alpha+1,\beta+1)}^2
         &\lesssim N^2 \sum_{n=n_0}^N u_n^2 \| J_{n-1}^{(\alpha+1,\beta+1)} \|_{(\alpha+1,\beta+1)}^2.
   \end{align*}
   By \eqref{eq:gen-jacobi-norm} and \eqref{eq:jacobi-norm},
   \[
      \| J_{n-1}^{(\alpha+1,\beta+1)} \|_{(\alpha+1,\beta+1)}^2 \approx N^{-1}, \qquad
      \| J_{n}^{(\alpha,\beta)} \|_{(\alpha,\beta)}^2 \approx N^{-1},
   \]
   and so
   \[
      \| \partial_x u(x) \|_{(\alpha+1,\beta+1)}^2
         \lesssim N^2 \sum_{n=n_0}^N u_n^2 \| J_{n}^{(\alpha,\beta)} \|_{(\alpha,\beta)}^2
         = N^2 \| u \|_{(\alpha,\beta)}^2,
   \]
   as desired.
\end{proof}

The following Marcinkiewicz--Zygmund-type estimate is a direct consequence of \eqref{eq:lobatto-exact} and \eqref{eq:lobatto-bound}.
\begin{lemma}
   \label{lem:quadrature-bound}
   Let $\alpha, \beta > -1$, and let $\sigma$ and $\tau$ be non-positive integers.
   Set $M = \lfloor N - (\sigma+\tau)/2 \rfloor$ and
   \begin{alignat*}{4}
      \gamma &= \alpha + \sigma, \qquad&\qquad k &= \lfloor \sigma / 2 \rfloor, \\
      \delta &= \beta + \tau, \qquad&\qquad \ell &= \lfloor \tau / 2 \rfloor.
   \end{alignat*}
   Then, for any $u \in Q^{(k,\ell)}_M$, it holds that
   \begin{equation}
      \label{eq:quadrature-bound}
      \| u \|_{(\gamma,\delta)}^2 \leq \sum_{j=0}^{N} u(\xi_{j,N}^{(\alpha,\beta)})^2 (1-\xi_{j,N}^{(\alpha,\beta)})^{\sigma} (1+\xi_{j,N}^{(\alpha,\beta)})^{\tau} \rho_{j,N}^{(\alpha,\beta)},
   \end{equation}
   where the endpoint values are interpreted by continuous extension.
   Equality holds in \eqref{eq:quadrature-bound} if $\deg(u) < N - (\sigma + \tau) / 2$.
\end{lemma}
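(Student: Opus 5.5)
The plan is to absorb the negative-exponent factors into the polynomial and then use the Lobatto quadrature results \eqref{eq:lobatto-exact} and \eqref{eq:lobatto-bound} in the $(\alpha,\beta)$ weight. The key structural fact is that an element of $Q^{(k,\ell)}_M$ with $k,\ell\le 0$ is divisible by $(1-x)^{-k}(1+x)^{-\ell}$. This holds because each basis function $J_n^{(k,\ell)}$ carries that factor whenever $k\le -1$ or $\ell\le -1$.

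\textbf{Step 1: rewrite the integrand.} Since $\sigma,\tau\le 0$, we have $-\sigma\le -2k$, because $k=\lfloor\sigma/2\rfloor$ gives $2k\le\sigma$. Likewise $-\tau\le -2\ell$. Hence $(1-x)^{\sigma}(1+x)^{\tau}u^2$ is a polynomial
\[
p := u^2 (1-x)^{\sigma}(1+x)^{\tau}.
\]
This also justifies the continuous extension at the endpoints. We then have
\[
\|u\|^2_{(\gamma,\delta)} = \int_{-1}^1 p(x)\,(1-x)^\alpha(1+x)^\beta\,dx .
\]
Write $p=v^2 r$, where $v := u\,(1-x)^{k}(1+x)^{\ell}$ is a polynomial. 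Here $r=(1-x)^{\sigma-2k}(1+x)^{\tau-2\ell}$, and each exponent is $0$ or $1$ according to the parity of $\sigma$ and $\tau$.

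\textbf{Step 2: count degrees.} We have $\deg u\le M$, so
\[
\deg p \le 2M+\sigma+\tau \le 2N .
\]
If $\deg u < N-(\sigma+\tau)/2$, then $\deg p\le 2N-1$, and the exactness \eqref{eq:lobatto-exact} immediately gives equality in \eqref{eq:quadrature-bound}. The remaining case has $\deg p = 2N$, where exactness fails, and this is where the main difficulty lies.

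\textbf{Step 3: the top-degree case.} Here I will mimic the proof of \eqref{eq:lobatto-bound} in \cite[Lemma 3.3]{Shen2011} rather than apply it directly. Write $p = c\,P_N^{(\alpha,\beta)}(x)^2 + q$ with $\deg q\le 2N-1$, which is possible by matching leading coefficients. The quadrature is exact on $q$. For the remaining term we need
\[
\sum_j P_N^{(\alpha,\beta)}(\xi_j)^2\rho_j \;\ge\; \|P_N^{(\alpha,\beta)}\|^2_{(\alpha,\beta)} ,
\]
which is the classical fact underlying the left inequality of \eqref{eq:lobatto-bound}. So the sign of the defect is that of $c$. Since $p=v^2 r$ and $r$ is nonnegative on $\Lambda$ with leading coefficient $\pm1$, the sign of $c$ should follow from the parity structure. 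If $r=1$, then $p=v^2$ with $v\in\PP^N$, and \eqref{eq:lobatto-bound} applies directly. If $r$ has a linear factor, then $\sigma+\tau$ is odd, and $M=\lfloor N-(\sigma+\tau)/2\rfloor$ forces $2M+\sigma+\tau\le 2N-1$. In that case we are back in the exact case.

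The delicate point I expect is the case where both exponents are odd, so that $r=1-x^2$. Then the leading coefficient of $p$ is $-(\text{lc } v)^2<0$, and the defect has the wrong sign. I would try to handle this by instead writing $p=(1-x^2)v^2$. The Lobatto quadrature for weight $(\alpha,\beta)$ applied to $(1-x^2)v^2$ vanishes at the endpoints. It therefore equals the Jacobi--Gauss quadrature for weight $(\alpha+1,\beta+1)$ at the interior nodes, which are the zeros of $\partial_xP_N^{(\alpha,\beta)}$. That rule is exact up to degree $2N-3$, and the Gauss remainder for $v^2$ with $\deg v = N-1$ has a definite sign giving the desired inequality. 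Checking this sign carefully is the main obstacle.
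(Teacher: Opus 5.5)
\textbf{Gap in Step 3: the both-odd case cannot be closed, because the statement is false there.}

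Your Steps 1--2 are correct. So are the subcases of Step 3 in which $\sigma$ or $\tau$ is even. They are in fact more careful than the paper's proof. The paper simply invokes \eqref{eq:lobatto-bound} for the degree-$\le 2N$ polynomial $u^2(1-x)^\sigma(1+x)^\tau$. That is legitimate only when this polynomial is the square of an element of $\PP^N$ (your case $r=1$) or has degree $\le 2N-1$ (the odd-sum case).

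The remaining subcase is $\sigma,\tau$ both odd with $\deg u=M$. Your own computation already decides it. There $p=(1-x^2)v^2$ with $\deg v=N-1$, so $c<0$. The quadrature sum minus the integral equals $c\,\bigl(\sum_j P_N^{(\alpha,\beta)}(\xi_{j,N}^{(\alpha,\beta)})^2\rho_{j,N}^{(\alpha,\beta)}-\|P_N^{(\alpha,\beta)}\|_{(\alpha,\beta)}^2\bigr)$. The bracket is strictly positive: the discrete norm of $P_N^{(\alpha,\beta)}$ is $(2+(\alpha+\beta+1)/N)>1$ times the continuous one. Hence the quadrature sum strictly underestimates $\|u\|_{(\gamma,\delta)}^2$.

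Your Gauss-rule reformulation gives the same verdict rather than a rescue. The $(N-1)$-point Gauss remainder for $v^2$ equals $(\text{leading coefficient of } v)^2$ times the weighted norm of the monic nodal polynomial, which is positive. So Gauss quadrature also underestimates $\int v^2(1-x)^{\alpha+1}(1+x)^{\beta+1}\,dx$.

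Here is a concrete counterexample. Take $\sigma=\tau=-1$, so $k=\ell=-1$, $M=N+1$ and $Q^{(-1,-1)}_{N+1}=(1-x^2)\PP^{N-1}$. Let $u=(1-x^2)P_{N-1}^{(\alpha+1,\beta+1)}$, which is a multiple of the nodal polynomial $(1-x^2)\partial_x P_N^{(\alpha,\beta)}$. Every term on the right of \eqref{eq:quadrature-bound} vanishes, including the endpoint terms, since $u^2/(1-x^2)$ vanishes at $\pm1$. Meanwhile $\|u\|_{(\alpha-1,\beta-1)}^2=\|P_{N-1}^{(\alpha+1,\beta+1)}\|_{(\alpha+1,\beta+1)}^2>0$. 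This works already for $N=1$.

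So no argument can finish your Step 3: the statement needs an extra hypothesis, and the paper's one-line proof shares the oversight. What your argument does prove is a corrected version. Inequality \eqref{eq:quadrature-bound} holds whenever at least one of $\sigma,\tau$ is even. When both are odd, it holds (with equality) under the extra assumption $\deg u\le M-1$.

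That corrected version is all the paper actually uses. In \Cref{lem:interp-stability}, the interpolant lies in $Q^{(\sigma,\tau)}_{N+\hat\sigma+\hat\tau}$ with $\sigma,\tau\in\{-2,-1,0\}$. The only both-odd case there is $(\sigma,\tau)=(-1,-1)$, where the degree is at most $N<N+1=M$. So only the exact case ever arises.
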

\begin{proof}
   Since $u \in Q^{(k,\ell)}_M$, $u(x)^2 (1-x)^\sigma (1+x)^\tau$ is a polynomial of degree at most $2M + \sigma + \tau \leq 2N$.
   Then, by \eqref{eq:lobatto-bound}
   \begin{align*}
      \| u \|_{(\gamma, \delta)}^2
         &= \int u(x)^2 (1-x)^{\gamma-\alpha} (1+x)^{\delta-\beta} (1-x)^\alpha (1+x)^\beta \, dx \\
         &\leq \sum_{j=0}^{N} u(\xi_{j,N}^{(\alpha,\beta)})^2 (1-\xi_{j,N}^{(\alpha,\beta)})^{\gamma-\alpha} (1+\xi_{j,N}^{(\alpha,\beta)})^{\delta-\beta} \rho_{j,N}^{(\alpha,\beta)},
   \end{align*}
   from which the conclusion follows.
   Equality holds if $\deg(u) < N - (\sigma + \tau) / 2$ by exactness of the quadrature in \eqref{eq:lobatto-exact}.
\end{proof}

A main object of study will be the degree-$N$ $(\alpha,\beta)$-Jacobi--Gauss--Lobatto interpolation operator $I_N^{(\alpha,\beta)} : C(\Lambda) \to \PP^N$, defined by
\[
   (I_N^{(\alpha,\beta)} u)(\xi_{i,N}^{(\alpha,\beta)}) = u(\xi_{i,N}^{(\alpha,\beta)}).
\]
We also consider a generalized interpolation operator $I^{(\alpha,\beta)}_{(k,\ell),N}$ for non-positive integers $k$ and $\ell$ defined as follows.
Let $k^* = \max\{-1, k\}$, $\ell^* = \max\{-1, \ell\}$, $\hat{k} = \max\{0, -k-1\}$, $\hat{\ell} = \max\{0, -\ell-1\}$, and define
\begin{gather*}
   I^{(\alpha,\beta)}_{(k,\ell),N} : B^1_{(k^*,\ell^*)} \to Q^{(k,\ell)}_{N+\hat{k}+\hat{\ell}},
\end{gather*}
by
\begin{gather*}
   I^{(\alpha,\beta)}_{(k,\ell),N} u (\xi^{(\alpha,\beta)}_{i.N}) = u (\xi^{(\alpha,\beta)}_{i.N}).
\end{gather*}
Note that $I^{(\alpha,\beta)}_{(0,0),N} = I^{(\alpha,\beta)}_N$.
The following useful result is an analogue of \cite[Theorems 3.1 and 4.1]{Bernardi1992}.
\begin{lemma}
   \label{lem:interp-stability}
   Let $\alpha, \beta > -1$, and $\sigma, \tau \in \{-2, -1, 0\}$, and set $\gamma = \alpha + \sigma$ and $\delta = \beta + \tau$.
   Then, for any $u \in B^1_{(\sigma^*,\tau^*)}$, the interpolant $I^{(\alpha,\beta)}_{(\sigma,\tau),N} u \in Q^{(\sigma,\tau)}_{N+\hat{\sigma}+\hat{\tau}}$ satisfies the bound
   \[
      \| I^{(\alpha,\beta)}_{(\sigma,\tau),N} u \|^2_{(\gamma,\delta)} \lesssim \| u \|^2_{(\gamma,\delta)} + N^{-2} \| \partial_x u \|^2_{(\gamma + 1,\delta + 1)}.
   \]
\end{lemma}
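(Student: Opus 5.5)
The plan follows the Bernardi--Maday argument for Jacobi--Gauss--Lobatto interpolation stability (\cite[Theorems 3.1 and 4.1]{Bernardi1992}). The interpolation problem is turned into a weighted discrete norm via \Cref{lem:quadrature-bound}, and the discrete point values are then controlled by local one-dimensional Sobolev inequalities on the subintervals determined by the nodes.

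\textbf{Step 1 (reduction to a discrete sum).} Set $I u := I^{(\alpha,\beta)}_{(\sigma,\tau),N} u \in Q^{(\sigma,\tau)}_{N+\hat\sigma+\hat\tau}$. Since $\sigma,\tau \ge -2$, we have $\hat\sigma,\hat\tau \le 1$. This gives $Iu \in Q^{(k,\ell)}_M$ with $k=\lfloor\sigma/2\rfloor$, $\ell=\lfloor\tau/2\rfloor$ and $M=\lfloor N-(\sigma+\tau)/2\rfloor$; this bookkeeping of degrees against the factors $(1\mp x)^{-k}$ is the first thing to verify case by case. \Cref{lem:quadrature-bound} then gives
\[
   \| Iu \|^2_{(\gamma,\delta)} \le \sum_{j=0}^N u(\xi_j)^2 (1-\xi_j)^{\sigma}(1+\xi_j)^{\tau}\rho_j ,
\]
because $Iu$ and $u$ agree at the nodes. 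Where $\sigma$ or $\tau$ is negative, endpoint terms are interpreted by continuous extension. Functions in $B^1_{(\sigma^*,\tau^*)}$ with $\sigma^*=-1$ have the appropriate vanishing/limiting behaviour at the endpoint, and this should make the corresponding endpoint term either vanish or be controlled by a Hardy-type bound.

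\textbf{Step 2 (weights).} By the Szegő estimate \eqref{eq:szego-weight}, the interior weight satisfies $(1-\xi_j)^\sigma(1+\xi_j)^\tau \rho_j \approx N^{-1}(1-\xi_j)^{\gamma+1/2}(1+\xi_j)^{\delta+1/2}$. Node separations are comparable to $h_j \approx N^{-1}\sqrt{1-\xi_j^2}$ (from the angle localization \eqref{eq:sundermann} and its Jacobi analogue). Hence $(1-\xi_j)^\sigma(1+\xi_j)^\tau\rho_j \approx h_j\, w_{\gamma,\delta}(x)$ uniformly for $x$ in the cell $K_j$ of size $\approx h_j$ around $\xi_j$, where $w_{\gamma,\delta}=(1-x)^\gamma(1+x)^\delta$. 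The weight is essentially constant on each such cell.

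\textbf{Step 3 (local Sobolev bound and summation).} On each interior cell $K_j$ I will use the one-dimensional inequality
\[
   h_j u(\xi_j)^2 \lesssim \|u\|^2_{L^2(K_j)} + h_j^2 \|\partial_x u\|^2_{L^2(K_j)} .
\]
Multiplying by $w_{\gamma,\delta}(\xi_j)\approx w_{\gamma,\delta}|_{K_j}$ and using $h_j^2 \approx N^{-2}(1-x)(1+x)$ on $K_j$ gives
\[
   u(\xi_j)^2 \cdot(\text{weight}) \lesssim \|u\|^2_{(\gamma,\delta),K_j} + N^{-2}\|\partial_x u\|^2_{(\gamma+1,\delta+1),K_j} .
\]
The cells have bounded overlap, so summing over $j$ yields the claim for interior nodes.

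\textbf{Main obstacle (endpoint nodes $\xi_0,\xi_N$).} Here the weights behave like $N^{-2(\alpha+1)}$ rather than following the interior pattern, and for $\sigma,\tau<0$ the weight factors blow up. I would first treat the endpoint cells $[1-cN^{-2},1]$ directly. In the case $\gamma>-1$, bound $|u(1)|$ by the $L^2$ average of $u$ over the cell plus $\int |\partial_x u|$, then apply Cauchy--Schwarz with weight $(1-x)^{\gamma+1}$. The integral $\int_{1-cN^{-2}}^1 (1-x)^{-\gamma-1}dx$ is finite only when $\gamma<0$, so for $\gamma\ge 0$ I would instead use the Hardy-type trick: integrate from a point $x_*$ at distance $N^{-2}$, picking up a logarithm-free factor $N^{-2(\gamma+1)}\cdot N^{2\gamma}$. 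When $\sigma=-2$ the interpolant carries a factor $(1-x)$ and $u$ must vanish at $1$ in the $B^1_{(-1,\cdot)}$ sense. In that case the endpoint term is replaced by the limit $u(x)/(1-x)$, which I would control by $\partial_x u$ via the fundamental theorem of calculus and the same weighted Cauchy--Schwarz. Checking that these exponent computations close for each of the finitely many $(\sigma,\tau)$ cases is where I expect the real work to lie.
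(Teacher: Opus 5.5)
Your interior argument is essentially the paper's. The paper also reduces to the weighted quadrature sum via \Cref{lem:quadrature-bound} and the Szeg\H{o} weight estimate. It then does the local Sobolev step in the angle variable, on intervals $J_j$ of length $\sim N^{-1}$, bounding $\cot(\theta/2)$ and $\tan(\theta/2)$ by $N$. You instead freeze the weight on cells of size $h_j\approx N^{-1}\sqrt{1-\xi_j^2}$, which is equivalent. The gap is in your endpoint treatment.

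\textbf{Case $\sigma<0$ (resp.\ $\tau<0$).} This is the case the paper actually proves, and it needs no endpoint estimate; the one you propose would not work. By the definition of the generalized Jacobi polynomials, every element of $Q^{(\sigma,\tau)}_{N+\hat\sigma+\hat\tau}$ contains the full factor $(1-x)^{-\sigma}(1+x)^{-\tau}$. For $\sigma=-2$ this is $(1-x)^2$, not $(1-x)$. Hence $(Iu)^2(1-x)^{\sigma}(1+x)^{\tau}$ still vanishes at $\pm1$, both endpoint terms of the quadrature sum are zero, and only $1\le j\le N-1$ remain. The vanishing comes from the structure of $Iu$, not from the behaviour of $u$. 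Your substitute, replacing the endpoint term by the limit of $u(x)/(1-x)$, conflates $u$ with $Iu$. The continuously extended term is $\lim_{x\to1}(Iu)^2(1-x)^{\sigma}(1+x)^{\tau}$. If $Iu$ had only one factor of $(1-x)$, this would be a multiple of $((Iu)'(1))^2$, a quantity fixed by all the nodal values rather than by $u$ near $x=1$. Moreover, for $u\in B^1_{(-1,\cdot)}$ one only has $|u(x)|\le(1-x)^{1/2}\|\partial_x u\|_{L^2(x,1)}$, so $u(x)/(1-x)$ need not even be bounded.

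\textbf{Case $\sigma=0$ (or $\tau=0$).} Here the endpoint term $\rho_{\mathrm{end}}u(1)^2$, with $\rho_{\mathrm{end}}\approx N^{-2(\alpha+1)}$, is genuinely present. Your Cauchy--Schwarz argument handles it only when $\gamma=\alpha<0$. The ``Hardy-type trick'' for $\gamma\ge0$ cannot succeed, since $\int_{x_*}^1(1-x)^{-\gamma-1}\,dx=\infty$ for every $x_*<1$. In fact the inequality itself fails there:
\begin{itemize}
\item Take $\sigma=\tau=0$, $\alpha\ge0$, and $u(x)=\phi(N^2(1-x))$ with $\phi(0)=1$.
\item Let $\phi$ be supported in $[0,c)$, with $c\approx 1$, so that $u$ vanishes at all other nodes.
\item Choose $\phi$ so that $\int_0^c(t^{\alpha}\phi^2+t^{\alpha+1}(\phi')^2)\,dt$ is arbitrarily small. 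This is possible exactly because $\alpha\ge0$, e.g.\ a logarithmic cutoff when $\alpha=0$.
\end{itemize}
Then $Iu$ is the Lagrange polynomial of the node $x=1$, so $\|Iu\|^2_{(\alpha,\beta)}\gtrsim\rho_{\mathrm{end}}$ by \eqref{eq:lobatto-bound}. By scaling, the right-hand side is a constant times $N^{-2\alpha-2}\int_0^c(t^{\alpha}\phi^2+t^{\alpha+1}(\phi')^2)\,dt$, which is arbitrarily small relative to $\rho_{\mathrm{end}}$.

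So this case needs an extra hypothesis such as $u(\pm1)=0$. That hypothesis holds where the paper uses it: in \Cref{thm:interp-stability} the lemma is applied to $u_h-\Pi^{(\alpha-1,\beta-1)}_{N,0}u_h$, and then the endpoint terms again simply vanish. The paper itself writes out only $\sigma,\tau<0$ and dismisses the rest as a ``simplified argument.'' The idea missing from your plan is that the endpoint contributions are made to vanish by the structure of the interpolant (or by $u(\pm1)=0$), not estimated.
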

\begin{proof}
   We will prove the case when $\sigma, \tau < 0$.
   The case of $\sigma = 0$ or $\tau = 0$ can be handled by a simplified argument, which does not involve singular weights.

   Note that since $\sigma, \tau \geq -2$, it holds that $N + \hat{\sigma} + \hat{\tau} \leq \lfloor N - (\sigma+\tau)/2 \rfloor$, and so \Cref{lem:quadrature-bound} applies to $I^{(\alpha,\beta)}_{(\sigma,\tau),N} u$, resulting in
   \[
      \| I^{(\alpha,\beta)}_{(\sigma,\tau),N} u \|^2_{(\gamma,\delta)}
         \leq \sum_{j=1}^{N-1} u(\xi_{j,N}^{(\alpha,\beta)})^2 (1-\xi_{j,N}^{(\alpha,\beta)})^{\sigma} (1+\xi_{j,N}^{(\alpha,\beta)})^{\tau} \rho_{j,N}^{(\alpha,\beta)},
   \]
   where the first and last term in the quadrature sum vanish since $(1-x)^{-2\sigma}(1+x)^{-2\tau}$ divides $I^{(\alpha,\beta)}_{(\sigma,\tau),N} u(x)^2$.
   The estimate \eqref{eq:szego-weight} for the weight implies
   \[
      (1-\xi_{j,N}^{(\alpha,\beta)})^{\sigma} (1+\xi_{j,N}^{(\alpha,\beta)})^{\tau} \rho_{j,N}^{(\alpha,\beta)}
      \approx N^{-1} (1-\xi_{j,N}^{(\alpha,\beta)})^{\gamma + 1/2 } (1+\xi_{j,N}^{(\alpha,\beta)})^{\delta + 1/2}
   \]
   which, after making the substitution $v(\theta) = u(\cos \theta)$, gives
   \[
      \| I^{(\alpha,\beta)}_{(\sigma,\tau),N} u \|^2_{(\gamma,\delta)}
         \lesssim N^{-1} \sum_{j=1}^{N-1} v(\theta_{j,N}^{(\alpha,\beta)})^2
         (1 - \cos \theta_{j,N}^{(\alpha,\beta)} )^{\gamma + 1/2} (1 + \cos\theta_{j,N}^{(\alpha,\beta)})^{\delta + 1/2}.
   \]
   The angles $\theta_{j,N}^{(\alpha,\beta)}$ are approximately uniformly distributed: each angle belongs to an interval $J_j$ of length $\sim N^{-1}$, and which intersects only a bounded number of other such intervals; in the Gauss--Lobatto case, see \eqref{eq:sundermann}.
   This implies, using the half-angle identity,
   \[
      \| I^{(\alpha,\beta)}_{(\sigma,\tau),N} u \|^2_{(\gamma,\delta)}
         \lesssim N^{-1} \sum_{j=1}^{N-1} \sup_{\theta \in J_j} \left(
            v(\theta)
            (\sin \tfrac{\theta}{2} )^{\gamma + 1/2} (\cos \tfrac{\theta}{2})^{\delta + 1/2} \right)^2.
   \]
   The Sobolev embedding of $H^1(J_j) \hookrightarrow L^\infty(J_j)$ (cf.\ \cite[Lemma B.4]{Shen2011})
   \[
      \| f \|_{L^\infty(J_j)}^2 \lesssim
         N \| f \|_{L^2(J_j)}^2 + N^{-1} \| f' \|_{L^2(J_j)}^2,
   \]
   gives the bound
   \begin{equation}
      \mathtoolsset{multlined-width=0.75\displaywidth}
      \begin{multlined}
         \label{eq:embedding-bound}
         \| I^{(\alpha,\beta)}_{(\sigma,\tau),N} u \|^2_{(\gamma,\delta)}
            \lesssim \sum_{j=1}^{N-1} \Big(
               \| v(\theta) (\sin \tfrac{\theta}{2} )^{\gamma + 1/2} (\cos \tfrac{\theta}{2})^{\delta + 1/2} \|_{L^2(J_j)}^2 \\
               + N^{-2} \| \partial_\theta (v(\theta) (\sin \tfrac{\theta}{2} )^{\gamma + 1/2} (\cos \tfrac{\theta}{2})^{\delta + 1/2}) \|_{L^2(J_j)}^2 \Big).
      \end{multlined}
   \end{equation}
   To bound the second term on the right-hand side, note
   \begin{equation}
      \label{eq:deriv-computation}
      \mathtoolsset{multlined-width=0.85\displaywidth}
      \begin{multlined}
         \partial_\theta \big(v(\theta) (\sin \tfrac{\theta}{2} )^{\gamma + 1/2} (\cos \tfrac{\theta}{2})^{\delta + 1/2}\big) =
         (\sin \tfrac{\theta}{2} )^{\gamma + 1/2} (\cos \tfrac{\theta}{2})^{\delta + 1/2}
            \Big( \partial_\theta v(\theta) \\
             + v(\theta) \big( \tfrac{1}{2} (\gamma+\tfrac{1}{2}) \cot(\tfrac{\theta}{2}) - \tfrac{1}{2} (\delta+\tfrac{1}{2})\tan(\tfrac{\theta}{2}) \big)
            \Big).
      \end{multlined}
   \end{equation}
   Because of the Jacobi--Gauss--Lobatto angle spacing, $\sin(\tfrac{\theta}{2}) \gtrsim N^{-1}$ and $\cos(\tfrac{\theta}{2}) \gtrsim N^{-1}$ on $J_i$, $1 \leq i \leq N - 1$, and therefore
   \begin{equation}
      \label{eq:sin-bound}
      \left( \tfrac{1}{2} (\gamma+\tfrac{1}{2}) \cot(\tfrac{\theta}{2}) - \tfrac{1}{2} (\delta+\tfrac{1}{2})\tan(\tfrac{\theta}{2}) \right)^2 \lesssim N^2.
   \end{equation}
   Together, \eqref{eq:deriv-computation} and \eqref{eq:sin-bound} give
   \begin{equation}
      \label{eq:deriv-bound}
      \mathtoolsset{multlined-width=0.9\displaywidth}
      \begin{multlined}
         \| \partial_\theta (v(\theta) (\sin \tfrac{\theta}{2} )^{\gamma + 1/2} (\cos \tfrac{\theta}{2})^{\delta + 1/2}) \|_{L^2(J_j)}^2
         \lesssim  \| (\partial_\theta v(\theta)) (\sin \tfrac{\theta}{2} )^{\gamma + 1/2} (\cos \tfrac{\theta}{2})^{\delta + 1/2} \|_{L^2(J_j)}^2 \\
         + N^2  \| v(\theta) (\sin \tfrac{\theta}{2} )^{\gamma + 1/2} (\cos \tfrac{\theta}{2})^{\delta + 1/2} \|_{L^2(J_j)}^2.
      \end{multlined}
   \end{equation}
   From \eqref{eq:embedding-bound} and \eqref{eq:deriv-bound}, it holds
   \begin{align*}
      \| I^{(\alpha,\beta)}_{(\sigma,\tau),N} u \|^2_{(\gamma,\delta)}
         &\lesssim \sum_{j=1}^{N-1} \Big(
            \| v(\theta) (\sin \tfrac{\theta}{2} )^{\gamma + 1/2} (\cos \tfrac{\theta}{2})^{\delta + 1/2} \|_{L^2(J_j)}^2 \\ &\hspace*{2cm} +
            N^{-2} \| (\partial_\theta v(\theta)) (\sin \tfrac{\theta}{2} )^{\gamma + 1/2} (\cos \tfrac{\theta}{2})^{\delta + 1/2}) \|_{L^2(J_j)}^2 \Big) \\
         &\leq \int_0^\pi (v(\theta)^2 + N^{-2}(\partial_\theta v(\theta))^2) (\sin \tfrac{\theta}{2} )^{2\gamma + 1} (\cos \tfrac{\theta}{2})^{2\delta + 1} \, d\theta \\
         &= \int_0^\pi (v(\theta)^2 + N^{-2}(\partial_\theta v(\theta))^2) (1 - \cos\theta)^{\gamma + 1/2} (1 + \cos\theta)^{\delta + 1/2} \, d\theta \\
         &= \int_{-1}^1 \left( u(x)^2 + N^{-2} (\partial_x u(x))^2 (1-x^2) \right) (1-x)^\gamma (1+x)^\delta \, dx,
   \end{align*}
   from which the result follows.
\end{proof}

\subsection{Stability of high-order--low-order interpolation}

Bernardi and Maday showed that the $(\alpha,\alpha)$-Jacobi--Gauss--Lobatto interpolation operator for $-1 < \alpha < 1$ is stable in $H^1_{(\alpha,\alpha)}(\Lambda)$ \cite{Bernardi1992}.
For the purposes of low-order preconditioning on the triangle, the Duffy transformation necessitates stability estimates in shifted norms.
In this context, it is sufficient to restrict the domain of the interpolation operator to low-order functions, i.e.\ certain piecewise polynomials.
Let $V_{h,N}^{(\alpha,\beta)}$ denote the space of functions that are piecewise linear with respect to the Jacobi--Gauss--Lobatto points $\xi_{j,N}^{(\alpha,\beta)}$.
We also use $V_{h,N,0}^{(\alpha,\beta)}$ to denote elements of $V_{h,N}^{(\alpha,\beta)}$ that vanish at $\pm 1$.
The main result of this section concerns stability in the $H^1_{(\alpha,\beta)}$-seminorm of the interpolant $I_N^{(0,0)} : V_{h,N}^{(0,0)} \to \PP^N$, for $\alpha,\beta \in \{ 0, 1 \}$.

Let $\Pi_N^{(\alpha,\beta)}$ denote the $(\alpha,\beta)$-weighted $L^2$-orthogonal projection onto $Q_N^{(\alpha,\beta)}$.
The accuracy properties of $\Pi_N^{(\alpha,\beta)}$ were summarized by \citeauthor{Shen2011} in \cite{Shen2011}.
\begin{lemma}[Theorem 6.1 of \cite{Shen2011}]
   \label{lem:proj-approx}
   For any $\alpha, \beta \in \mathbb{Z}$ and $m \leq N +1$, for $u \in L^2_{(\alpha,\beta)}(\Lambda)$ with $\partial_x^k u \in L^2_{(\alpha+k,\beta+k)}(\Lambda)$ for all $k \leq m \leq N + 1$ and $0 \leq \mu \leq m$,
   \[
      \| \partial_x^\mu( \Pi_N^{(\alpha,\beta)}u - u ) \|_{\alpha + \mu, \beta + \mu}^2
         \lesssim \frac{(N - m + 1)!}{(N - \mu + 1)!} (N + m)^{\mu - m} \| \partial_x^m u \|_{(\alpha + m, \beta + m)}^2.
   \]
\end{lemma}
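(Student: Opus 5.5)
The plan is to prove the estimate by a direct spectral (modal) computation in the basis of generalized Jacobi polynomials. The only tools needed are the derivative recurrence \eqref{eq:gen-jacobi-deriv} and the norm formula \eqref{eq:gen-jacobi-norm}--\eqref{eq:jacobi-norm}. First, expand $u = \sum_{n \geq n_0} \hat u_n J_n^{(\alpha,\beta)}$ in $L^2_{(\alpha,\beta)}(\Lambda)$; this is legitimate because the $J_n^{(\alpha,\beta)}$ form a complete orthogonal system. Since $\Pi_N^{(\alpha,\beta)}$ is the orthogonal projection onto $Q_N^{(\alpha,\beta)}$, the error is the tail
\[
   u - \Pi_N^{(\alpha,\beta)} u = \sum_{n > N} \hat u_n J_n^{(\alpha,\beta)} .
\]

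Next, iterate \eqref{eq:gen-jacobi-deriv} $\mu$ times to get
\[
   \partial_x^\mu J_n^{(\alpha,\beta)} = \kappa_{n,\mu} J_{n-\mu}^{(\alpha+\mu,\beta+\mu)}, \qquad \kappa_{n,\mu} = \prod_{j=0}^{\mu-1} C_{n-j}^{(\alpha+j,\beta+j)} .
\]
Because $\{J_{n-\mu}^{(\alpha+\mu,\beta+\mu)}\}$ is orthogonal in $L^2_{(\alpha+\mu,\beta+\mu)}$, this yields
\[
   \| \partial_x^\mu (u - \Pi_N^{(\alpha,\beta)} u)\|^2_{(\alpha+\mu,\beta+\mu)} = \sum_{n>N} \hat u_n^2\, \kappa_{n,\mu}^2\, h_{n-\mu}^{(\alpha+\mu,\beta+\mu)},
\]
where $h$ denotes the squared norm. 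In the same way, $\|\partial_x^m u\|^2_{(\alpha+m,\beta+m)} = \sum_n \hat u_n^2 \kappa_{n,m}^2 h_{n-m}^{(\alpha+m,\beta+m)}$, and the tail $n>N$ of this sum is bounded by the full sum. (The term-by-term differentiation of the series is justified by the hypothesis $\partial_x^k u \in L^2_{(\alpha+k,\beta+k)}$, via a density or Parseval argument.) It therefore suffices to bound, uniformly for $n \geq N+1$, the ratio
\[
   R_n := \frac{\kappa_{n,\mu}^2\, h_{n-\mu}^{(\alpha+\mu,\beta+\mu)}}{\kappa_{n,m}^2\, h_{n-m}^{(\alpha+m,\beta+m)}} = \frac{h_{n-\mu}^{(\alpha+\mu,\beta+\mu)}}{h_{n-m}^{(\alpha+m,\beta+m)}} \prod_{j=\mu}^{m-1} \bigl(C_{n-j}^{(\alpha+j,\beta+j)}\bigr)^{-2}.
\]
The target bound is $R_n \lesssim \frac{(n-m)!}{(n-\mu)!}(n+m)^{\mu-m}$.

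To evaluate $R_n$, insert the Gamma-function formula \eqref{eq:jacobi-norm}, with the index shifts $n_0$ and the absolute values of \eqref{eq:gen-jacobi-norm}. Each step $j \mapsto j+1$ multiplies the norm ratio by a rational factor comparable to $(n-j)(n+j+\alpha+\beta+1)$, up to constants depending only on $\alpha,\beta$. Meanwhile each $C_{n-j}^2$ is comparable to $(n-j+\alpha+\beta+j+1)^2$ in the ``positive'' regime and to $(n-j)^2$ in the negative regimes. After cancellation the product behaves like $\prod_{j=\mu}^{m-1} \bigl[(n-j)(n+j)\bigr]^{-1}$. Bounding $(n-j)^{-1}$ exactly by the factorial ratio and $(n+j)^{-1} \leq (n+m)^{-1}\cdot\frac{n+m}{n+j}$ with $\frac{n+m}{n+j} \leq 2$ gives $R_n \lesssim \frac{(n-m)!}{(n-\mu)!}(n+m)^{\mu-m}$, since $m \leq n$. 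This expression is nonincreasing in $n$, because each factor $(n-j)^{-1}$ decreases and $\mu \leq m$. Hence $\sup_{n \geq N+1} R_n$ is attained at $n=N+1$, which gives exactly the stated constant $\frac{(N-m+1)!}{(N-\mu+1)!}(N+m)^{\mu-m}$.

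The main obstacle is the bookkeeping in the last step. The generalized polynomials change regime as the weight indices $\alpha+j,\beta+j$ cross from $\leq -1$ to $> -1$ during differentiation, so the constants $C$, the offsets $n_0$, and the absolute values in \eqref{eq:gen-jacobi-norm} all switch along the product. I would handle this by checking that in each of the three cases of \eqref{eq:gen-jacobi-deriv}, the single-step ratio $h_{n-j-1}^{(\alpha+j+1,\beta+j+1)}\big/\bigl[(C_{n-j}^{(\alpha+j,\beta+j)})^2 h_{n-j}^{(\alpha+j,\beta+j)}\bigr]$ is comparable to $[(n-j)(n+j)]^{-1}$, with constants depending only on $\alpha,\beta$. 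Telescoping these one-step estimates then avoids any global case analysis.
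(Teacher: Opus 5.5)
Your argument is correct and is essentially the standard proof behind the cited Theorem 6.1 of Shen--Tang--Wang; the paper gives no proof of its own and simply imports that result. The steps match: Parseval in the generalized Jacobi basis, the derivative recurrence, a bound on the ratio of derivative norms, and monotonicity in $n$.

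One bookkeeping correction to your last paragraph: the one-step quantity that telescopes is $h_{n-j}^{(\alpha+j,\beta+j)}\big/\bigl[(C_{n-j}^{(\alpha+j,\beta+j)})^2 h_{n-j-1}^{(\alpha+j+1,\beta+j+1)}\bigr]$, whereas your displayed fraction is upside down, and your intermediate description of the per-step norm factor is also off. In every regime this quantity equals exactly $[(n-j)(n+j+\alpha+\beta+1)]^{-1}$, by the Sturm--Liouville identity $\|\partial_x J_k^{(a,b)}\|^2_{(a+1,b+1)} = k(k+a+b+1)\|J_k^{(a,b)}\|^2_{(a,b)}$, so no case analysis is needed.
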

As a consequence of \Cref{lem:proj-approx}, we can obtain weighted $H^1$ stability of the interpolation operator $I^{(0,0)}_{(-2,-1),N}$, which is of use in the analysis of the Duffy-weighted $L^2$-conforming space.
\begin{theorem}
   \label{thm:weighted-interp-stability}
   For $u \in B^1_{(-2,-1)}$, it holds that
   \begin{equation}
      \label{eq:weighted-interp-stability}
      \| \partial_x I^{(0,0)}_{(-2,-1),N} u \|_{(-1,0)} \lesssim \| \partial_x u \|_{(-1,0)}.
   \end{equation}
\end{theorem}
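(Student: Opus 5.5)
We use the standard projection-plus-interpolation splitting. Set $I := I^{(0,0)}_{(-2,-1),N}$ and $\Pi := \Pi^{(-2,-1)}_{N+1}$, the $(-2,-1)$-weighted projection onto $Q^{(-2,-1)}_{N+1}$. This is exactly the target space of $I$, since $\hat\sigma = 1$ and $\hat\tau = 0$.

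Because $I$ reproduces $Q^{(-2,-1)}_{N+1}$, we have $I\Pi u = \Pi u$, and hence
\[
   \partial_x I u = \partial_x \Pi u + \partial_x I (u - \Pi u).
\]
The first term is handled by \Cref{lem:proj-approx} with $m = \mu = 1$ and $(\alpha,\beta) = (-2,-1)$:
\[
   \| \partial_x(\Pi u - u) \|_{(-1,0)} \lesssim \| \partial_x u \|_{(-1,0)}.
\]
The triangle inequality then gives $\| \partial_x \Pi u \|_{(-1,0)} \lesssim \| \partial_x u \|_{(-1,0)}$.

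For the second term, write $e = u - \Pi u$. The interpolant $Ie$ lies in $Q^{(-2,-1)}_{N+1}$, so the inverse inequality of \Cref{lem:gen-inv-ineq} with $(\alpha,\beta) = (-2,-1)$ gives
\[
   \| \partial_x I e \|_{(-1,0)} \lesssim N \| I e \|_{(-2,-1)}.
\]
Next apply \Cref{lem:interp-stability} with $\alpha = \beta = 0$ and $(\sigma,\tau) = (-2,-1)$, so that $(\gamma,\delta) = (-2,-1)$:
\[
   \| I e \|_{(-2,-1)}^2 \lesssim \| e \|_{(-2,-1)}^2 + N^{-2} \| \partial_x e \|_{(-1,0)}^2.
\]
Finally, \Cref{lem:proj-approx} with $m = 1$ and $\mu = 0$ gives
\[
   \| e \|_{(-2,-1)}^2 \lesssim \frac{N!}{(N+2)!}(N+2)^{-1} \| \partial_x u \|_{(-1,0)}^2 \approx N^{-2} \| \partial_x u \|_{(-1,0)}^2 .
\]
The $\mu = 1$ bound from the first step controls $\| \partial_x e \|_{(-1,0)}$. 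Multiplying through by $N$, we get $\| \partial_x I e \|_{(-1,0)} \lesssim \| \partial_x u \|_{(-1,0)}$, and combining the two terms yields \eqref{eq:weighted-interp-stability}.

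The main obstacle is justifying the hypotheses of these lemmas for $u \in B^1_{(-2,-1)}$.

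First, \Cref{lem:proj-approx} requires $u \in L^2_{(-2,-1)}$, with the generalized-Jacobi degree index taken consistently ($N+1$ rather than $N$). Membership in $B^1_{(-1,-1)}$, as the domain of $I$ is stated, must be reconciled with the $(-2,-1)$ weights. I would check that $u$ vanishing at $x = 1$ (to second order in the weighted sense) together with $\partial_x u \in L^2_{(-1,0)}$ gives the Hardy-type bound $\| u \|_{(-2,-1)} \lesssim \| \partial_x u \|_{(-1,0)}$. The same reasoning would apply to $e$.

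Second, \Cref{lem:interp-stability} is applied to $e$ and not only to $u$. This needs $e \in B^1_{(-1,-1)}$, which holds because $\Pi u$ is a polynomial with the same vanishing structure.

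If the factorial constants in \Cref{lem:proj-approx} turn out not to give exactly $N^{-2}$ for the shifted index, I would fall back on a direct generalized-Jacobi expansion argument, mirroring the proof of \Cref{lem:gen-inv-ineq}.
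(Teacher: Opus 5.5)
Your proposal is correct and follows the paper's proof essentially step for step: compare $Iu$ with a $(-2,-1)$-weighted projection, bound $\partial_x(\Pi u-u)$ by \Cref{lem:proj-approx}, and control $\partial_x I(u-\Pi u)$ through \Cref{lem:gen-inv-ineq}, \Cref{lem:interp-stability}, and a second use of \Cref{lem:proj-approx}; your $\Pi^{(-2,-1)}_{N+1}$, the exact range of $I$, in place of the paper's $\Pi^{(-2,-1)}_{N}$ makes no difference. Two small corrections: at index $N+1$ the $m=1$, $\mu=0$ factor is $\frac{(N+1)!}{(N+2)!}(N+2)^{-1}=(N+2)^{-2}$ (your displayed expression is smaller by a factor $N+1$, though the order $N^{-2}$ you state is right), and no Hardy inequality is needed, since $u\in L^2_{(-2,-1)}$ is part of the hypothesis $u\in B^1_{(-2,-1)}$, which embeds trivially in $B^1_{(-1,-1)}$.
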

\begin{proof}
   First, note that
   \begin{align*}
       \| \partial_x (I^{(0,0)}_{(-2,-1),N} u - u) \|_{(-1,0)}
         &\leq \| \partial_x (I^{(0,0)}_{(-2,-1),N} u - \Pi^{(-2,-1)}_N u) \|_{(-1,0)}
            + \| \partial_x (\Pi^{(-2,-1)}_N u - u) \|_{(-1,0)} \\
         &\lesssim \| \partial_x (I^{(0,0)}_{(-2,-1),N} u - \Pi^{(-2,-1)}_N u) \|_{(-1,0)}
            + \| \partial_x u  \|_{(-1,0)},
   \end{align*}
   where the last step follows from \Cref{lem:proj-approx}.
   Note that $\partial_x (I^{(0,0)}_{(-2,-1),N} u - \Pi^{(-2,-1)}_N u) \in Q^{(-2,-1)}_N$, and so, by the inverse inequality \Cref{lem:gen-inv-ineq},
   \begin{align*}
      \| \partial_x (I^{(0,0)}_{(-2,-1),N} u - \Pi^{(-2,-1)}_N u) \|_{(-1,0)}
      &\lesssim N \| I^{(0,0)}_{(-2,-1),N} u - \Pi^{(-2,-1)}_N u \|_{(-2,-1)} \\
      &= N \| I^{(0,0)}_{(-2,-1),N} (u - \Pi^{(-2,-1)}_N u) \|_{(-2,-1)}.
   \end{align*}
   By \Cref{lem:interp-stability},
   \[
      \| I^{(0,0)}_{(-2,-1),N} (u - \Pi^{(-2,-1)}_N u) \|_{(-2,-1)}
      \lesssim \| u - \Pi^{(-2,-1)}_N u \|_{(-2,-1)} + N^{-1} \| \partial_x (u - \Pi^{(-2,-1)}_N u) \|_{(-1,0)},
   \]
   and so, by another application of \Cref{lem:proj-approx},
   \[
      \| \partial_x (I^{(0,0)}_{(-2,-1),N} u - u) \|_{(-1,0)}
         \lesssim \| \partial_x \|_{(-1,0)},
   \]
   from which \eqref{eq:weighted-interp-stability} follows by the triangle inequality.
\end{proof}
We now turn to stability estimates of the Gauss--Lobatto interpolation operator $I^{(0,0)}_N$ in shifted Jacobi norms.
We first establish some technical results concerning the projection operators.
The following result gives a bound for the endpoint behavior of the orthogonal projection.
\begin{lemma}
   \label{lem:endpoint-bound}
   Let $u_h \in V_{h,N}^{(0,0)}$ with $u_h(\pm 1) = 0$.
   Then, for $\alpha, \beta \in \{ 0, 1 \}$,
   \[
      | \Pi_N^{(\alpha - 1, \beta - 1)} u_h(\pm 1) | \lesssim \| \partial_x u_h \|_{(\alpha,\beta)}.
   \]
\end{lemma}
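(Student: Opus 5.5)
The case split is by the value of the projection weight at the endpoint. Consider first the endpoint $x=1$; the endpoint $x=-1$ is symmetric, with $\beta$ in place of $\alpha$.

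\textbf{Trivial case $\alpha=0$.} The projection weight at $x=1$ is then $(1-x)^{-1}$. By the definition of the generalized Jacobi polynomials, every element of $Q_N^{(-1,\beta-1)}$ carries a factor $(1-x)$. So $\Pi_N^{(-1,\beta-1)}u_h(1)=0$ and there is nothing to prove.

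\textbf{Remaining case $\alpha=1$.} Here the projection is onto $Q_N^{(0,\beta-1)}$, and the bound must control $\Pi_N^{(0,\beta-1)}u_h(1)$ by $\|\partial_x u_h\|_{(1,\beta)}$. A naive Jacobi-coefficient argument loses a factor $\sqrt{\log N}$. Write $u_h=\sum_n \hat u_n J_n^{(0,\beta-1)}$. By \eqref{eq:gen-jacobi-deriv} and \eqref{eq:gen-jacobi-norm},
\[
\|\partial_x u_h\|_{(1,\beta)}^2 \approx \sum_n n\,\hat u_n^2 .
\]
Since $|J_n^{(0,\beta-1)}(1)|\lesssim 1$, Cauchy--Schwarz then only gives $|\Pi_N u_h(1)|\lesssim(\log N)^{1/2}\|\partial_x u_h\|_{(1,\beta)}$. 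This reflects the fact that $H^1_{(1,\cdot)}$ does not embed in $L^\infty$ near $x=1$. The proof must therefore use two further ingredients: the oscillation of the reproducing kernel, and the piecewise-linear structure of $u_h$.

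\textbf{Step 1: kernel representation.} I will write
\[
\Pi_N^{(0,\beta-1)}u_h(1)=(u_h,K_N)_{(0,\beta-1)},\qquad K_N(x)=\sum_{n\le N} J_n(1)J_n(x)/\|J_n\|^2 .
\]
By Christoffel--Darboux summation at the endpoint, $K_N$ equals $c_N P_N^{(1,\beta-1)}$ up to the $(1+x)$ factor coming from the generalized polynomials, with $c_N\approx N$.

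\textbf{Step 2: integrate by parts.} I will use the Rodrigues-type identity
\[
\partial_x\big[(1-x)^{a+1}(1+x)^{b+1}P_{n-1}^{(a+1,b+1)}\big]=-2n(1-x)^a(1+x)^bP_n^{(a,b)},
\]
or equivalently the antiderivative of $(1+x)^{\beta-1}K_N$, to move the derivative onto $u_h$. The boundary terms vanish because $u_h(\pm1)=0$. This gives
\[
\Pi_N u_h(1)=\int_{-1}^1 \partial_x u_h\, G_N\,dx ,
\]
where $G_N$ is explicit and of size roughly $N^{-1}c_N(1-x)(1+x)^{\beta}P_{N-1}^{(2,\beta)}$.

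\textbf{Step 3: split the integral and estimate each piece.} I will write $\int_{-1}^1 = \int_{1-x<N^{-2}}+\int_{1-x\ge N^{-2}}$, and similarly near $x=-1$ when $\beta=1$.
\begin{itemize}[label={}]
\item In the outer region, I will apply Cauchy--Schwarz against the weight $(1-x)(1+x)^\beta$ and use the pointwise Jacobi bounds $|P_M^{(a,b)}(\cos\theta)|\lesssim M^{-1/2}(\sin\frac\theta2)^{-a-1/2}(\cos\frac\theta2)^{-b-1/2}$. The hope is that $\int (1-x)^{-1}(1+x)^{-\beta}G_N^2\,dx$ stays $O(1)$ on this region.
\item In the inner region $1-x<N^{-2}$, the Lobatto spacing from \eqref{eq:sundermann} shows that this region lies inside the last subinterval $[\xi_{N-1,N}^{(0,0)},1]$, whose length is $\approx N^{-2}$. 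There $\partial_x u_h$ is constant, say equal to $s$. Since $u_h$ is linear there, $s^2 N^{-4}\approx \int_{\xi_{N-1}}^1(1-x)s^2\,dx\le\|\partial_x u_h\|_{(1,\beta)}^2$. Combined with $|G_N|\lesssim N^2$ near $x=1$ (from $P_{N-1}^{(2,\beta)}(1)\sim N^2$ and the factor $(1-x)$), the inner contribution is at most $|s|\cdot N^{-2}\cdot\sup|G_N|\lesssim \|\partial_x u_h\|_{(1,\beta)}$.
\end{itemize}

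\textbf{Main obstacle.} The hard step is the outer-region estimate. I must check that the square-integrability of $G_N$ against the inverse weight is uniform in $N$ rather than logarithmic; the cutoff at $N^{-2}$ is exactly where the logarithm could reappear. If this fails, my fallback is to also treat the first $O(1)$ subintervals adjacent to $x=1$ using the piecewise-linear structure. On each subinterval $[\xi_{j},\xi_{j+1}]$ I will replace $G_N$ by its mean. The resulting sum against the values $u_h(\xi_j)$ then converges thanks to the alternating sign of $P_{N-1}^{(2,\beta)}$ at consecutive Lobatto nodes, via summation by parts.
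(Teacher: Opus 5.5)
Your argument is correct in outline and takes a genuinely different route from the paper, but three details need repair.

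\textbf{(i) The formula for $G_N$.} The Rodrigues identity with $(a,b)=(1,\beta-1)$ produces an antiderivative of $(1-x)(1+x)^{\beta-1}P_N^{(1,\beta-1)}$, not of $(1+x)^{\beta-1}K_N$. The weight has exponent $0$ at $x=1$ while the Jacobi index is $1$, so your formula for $G_N$ is wrong. The true antiderivatives are simpler:
\begin{itemize}
\item For $\beta=1$, telescoping $(2n+1)P_n=P_{n+1}'-P_{n-1}'$ gives $K_N=\tfrac12(P_N'+P_{N+1}')$. Hence $G_N=\tfrac12(P_N+P_{N+1})=\tfrac12(1+x)P_N^{(0,1)}$.
\item For $\beta=0$, one finds $(1+x)^{-1}K_N=P_N'$, hence $G_N=P_N$.
\end{itemize}
Both satisfy $G_N(1)=1$ rather than vanishing there. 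In general no antiderivative vanishes at both ends: for $\beta=1$, $\int_{-1}^1 K_N\,dx=1$. The additive constant is harmless, however, since $\int_{-1}^1\partial_x u_h\,dx=0$.

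\textbf{(ii) The inner-region bound.} There you need $\sup|G_N|\lesssim 1$, not $N^2$. You have $|s|\lesssim N^2\|\partial_x u_h\|_{(1,\beta)}$ on an interval of length $N^{-2}$, so an $N^2$ bound on $G_N$ would leave a stray factor $N^2$. The bound $\lesssim 1$ does hold: $|P_N|\le 1$, and $|P_N^{(0,1)}|\lesssim 1$ for $1-x\le N^{-2}$.

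\textbf{(iii) The ``main obstacle''.} This step goes through directly. Muckenhoupt's bound $|P_N^{(0,b)}(x)|\lesssim N^{-1/2}(1-x)^{-1/4}$ on $[0,1-N^{-2}]$ gives
\[
\int_0^{1-N^{-2}}\frac{G_N(x)^2}{1-x}\,dx\lesssim N^{-1}\int_{N^{-2}}^1 t^{-3/2}\,dt\lesssim 1 .
\]
The contribution of $[-1,0]$ is $O(N^{-1})$:
\begin{itemize}
\item for $\beta=1$, the integrand is at most $\tfrac14(1+x)(P_N^{(0,1)})^2$, and $\|P_N^{(0,1)}\|_{(0,1)}^2=2/(N+1)$;
\item for $\beta=0$, the weight is bounded there and $\|P_N\|_0^2=2/(2N+1)$.
\end{itemize}
So the summation-by-parts fallback is unnecessary. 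The $\sqrt{\log N}$ loss disappears because the integrand behaves like $N^{-1}t^{-3/2}$. Its mass therefore sits at the cutoff scale $t\approx N^{-2}$, which is exactly where the linearity of $u_h$ takes over.

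\textbf{Comparison with the paper.} The paper argues locally on $J=[\xi_{0,N}^{(0,0)},\xi_{1,N}^{(0,0)}]$ at the endpoint $x=-1$ with $\beta=1$. It applies the scaled embedding $\|f\|_{L^\infty(J)}^2\lesssim N^2\|f\|_{L^2(J)}^2+N^{-2}\|f'\|_{L^2(J)}^2$ to $f=\Pi_N^{(\alpha-1,\beta-1)}u_h-u_h$. It bounds $\|f\|_{L^2(J)}$ by the global projection error estimate, using that the weight is bounded below on $J$. It bounds $f'$ by an inverse estimate, using that $u_h$ is linear on $J$, and concludes from $u_h(-1)=0$.

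That argument is shorter and reuses the approximation theory already in place, with no kernel computations. Both proofs ultimately rest on the same fact: $u_h$ is linear on the boundary cell of width $\approx N^{-2}$.

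The delicate point in the paper's version is the derivative term. An inverse inequality on an interval of length $\approx N^{-2}$ rescales with a factor $N^8$, not the stated $N^4$. The needed bound $\|f'\|_{L^2(J)}^2\lesssim N^2\|\partial_x u_h\|_{(\alpha,\beta)}^2$ is better obtained from three ingredients: linearity of $u_h$ on $J$, a weighted Nikolskii bound for $\partial_x\Pi_N^{(\alpha-1,\beta-1)}u_h$, and $H^1$-stability of the projection.

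Your route avoids inverse estimates altogether and makes the mechanism explicit: kernel decay away from the endpoint versus linearity of $u_h$ below scale $N^{-2}$. The cost is that it depends on closed-form Christoffel--Darboux kernels for these particular integer weights.
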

\begin{proof}
   We consider $x = -1$; the case of $x = 1$ follows by an analogous argument.
   If $\beta = 0$, then $\Pi_N^{(\alpha - 1, \beta - 1)} u_h(-1) = 0$ by construction, and so we consider only $\beta = 1$.
   Define the interval $J = [\xi_{0,N}^{(0,0)}, \xi_{1,N}^{(0,0)}]$.
   Note that the width of this interval is $h \approx N^{-2}$.
   By the embedding of $H^1$ into $L^\infty$ \cite[Lemma B.4]{Shen2011},
   \begin{equation}
      \label{eq:endpoint-embed}
      \| \Pi_N^{(\alpha - 1, \beta - 1)} u_h - u_h \|_{L^\infty(J)}^2 \lesssim
         N^2 \| \Pi_N^{(\alpha - 1, \beta - 1)} u_h - u_h \|_{L^2(J)}^2 + N^{-2} \| \partial( \Pi_N^{(\alpha - 1, \beta - 1)} u_h - u_h ) \|_{L^2(J)}^2.
   \end{equation}
   The first term on the right-hand side can be estimated, since $(1 - x)^{\alpha - 1} \geq 1$ pointwise,
   \begin{align*}
      \| \Pi_N^{(\alpha - 1, \beta - 1)} u_h - u_h \|_{L^2(J)}^2
         &= \int_{\xi_{0,N}^{(0,0)}}^{\xi_{1,N}^{(0,0)}} (\Pi_N^{(\alpha - 1, \beta - 1)} u_h(x) - u_h(x))^2 \, dx \\
         &\lesssim \int_{\xi_{0,N}^{(0,0)}}^{\xi_{1,N}^{(0,0)}} (\Pi_N^{(\alpha - 1, \beta - 1)} u_h(x) - u_h(x))^2 (1-x)^{\alpha - 1} \, dx \\
         &= \| \Pi_N^{(\alpha - 1, \beta - 1)} u_h - u_h \|_{J,(\alpha - 1,\beta - 1)}^2 \\
         &\leq \| \Pi_N^{(\alpha - 1, \beta - 1)} u_h - u_h \|_{(\alpha - 1,\beta - 1)}^2
   \end{align*}
   and so by \Cref{lem:proj-approx},
   \begin{equation}
      \label{eq:l2-shift-inequality}
      \| \Pi_N^{(\alpha - 1, \beta - 1)} u_h - u_h \|_{L^2(J)}^2
         \lesssim N^{-2} \| \partial_x u_h \|_{(\alpha,\beta)}^2,
   \end{equation}
   where the last step follows from \Cref{lem:proj-approx}.
   Since $u_h$ is piecewise linear with respect to the points $\xi_{j,N}^{(0,0)}$, it is linear on $J$, and so $\Pi_N^{(\alpha - 1, \beta - 1)} u_h - u_h$ restricted to $J$ is a polynomial of degree at most $N$.
   Therefore, by the polynomial inverse inequality \cite[Lemma 2.1]{Canuto1982},
   \begin{align*}
      \| \partial_x (\Pi_N^{(\alpha - 1, \beta - 1)} u_h - u_h) \|_{L^2(J)}^2
         &\lesssim N^4 \| \Pi_N^{(\alpha - 1, \beta - 1)} u_h - u_h \|_{L^2(J)}^2 \\
         &\lesssim N^2 \| \partial_x u_h \|_{(\alpha,\beta)}^2,
   \end{align*}
   using again \eqref{eq:l2-shift-inequality}.
   From \eqref{eq:endpoint-embed},
   \[
      \| \Pi_N^{(\alpha - 1, \beta - 1)} u_h - u_h \|_{L^\infty(J)}^2 \lesssim \| \partial_x u_h \|_{(\alpha,\beta)}^2,
   \]
   from which the endpoint bounds follow.
\end{proof}

We will also consider the projection $\Pi_{N,0}^{(\alpha,\beta)}$ onto the space $\PP^N_0$ of polynomials vanishing at $\pm 1$.
This projection satisfies
\begin{equation}
   \label{eq:def-zero-proj}
   (\Pi_{N,0}^{(\alpha,\beta)} u, v)_{(\alpha,\beta)} = (u, v)_{(\alpha,\beta)} \qquad\text{for all $v \in \PP^N_0$}.
\end{equation}
It can be seen that this projection can be written equivalently as
\[
   \Pi_{N,0}^{(\alpha,\beta)} u = (1-x^2) \Pi_{N-2}^{(\alpha+2,\beta+2)} (u / (1-x^2)).
\]
The following representation, relating $\Pi_{N,0}^{(\alpha,\beta)}$ to the unconstrained projection $\Pi_{N}^{(\alpha,\beta)}$, will be useful for the purposes of the stability analysis.

\begin{lemma}
   \label{lem:proj-correction}
   The projection $\Pi_{N,0}^{(\alpha,\beta)}$ is given by
   \begin{equation}
      \label{eq:proj-correction}
      \Pi_{N,0}^{(\alpha,\beta)} u = \Pi_{N}^{(\alpha,\beta)} u + c_1 J_{N-1}^{(\alpha+1,\beta+1)} + c_2 J_N^{(\alpha+1,\beta+1)},
   \end{equation}
   where $c_1$ and $c_2$ satisfy the linear equations
   \begin{align}
      \label{eq:lin-eqn-1}
      c_1 (-1)^{N-1} \binom{N + \beta}{N-1} + c_2 (-1)^{N} \binom{N + \beta + 1}{N} &= -(\Pi_{N}^{(\alpha,\beta)} u)(-1), \\
      \label{eq:lin-eqn-2}
      c_1 \binom{N + \alpha}{N-1} + c_2 \binom{N + \alpha + 1}{N} &= -(\Pi_{N}^{(\alpha,\beta)} u)(1).
   \end{align}
\end{lemma}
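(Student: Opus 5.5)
The plan is to characterize $\Pi_{N,0}^{(\alpha,\beta)} u$ by two properties and then show that the right-hand side of \eqref{eq:proj-correction} has both. The two properties are: (a) it lies in $\PP^N_0$; (b) $(\Pi_{N,0}^{(\alpha,\beta)}u - u, v)_{(\alpha,\beta)} = 0$ for all $v \in \PP^N_0$. Such an element is unique, since the weighted inner product restricted to $\PP^N_0$ is positive definite. So it suffices to show that $w := \Pi_N^{(\alpha,\beta)} u + c_1 J_{N-1}^{(\alpha+1,\beta+1)} + c_2 J_N^{(\alpha+1,\beta+1)}$ satisfies (a) and (b) when $c_1,c_2$ solve \eqref{eq:lin-eqn-1}--\eqref{eq:lin-eqn-2}. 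I take $\alpha,\beta>-1$ (in the application they are $\ge -1$ shifted, i.e.\ classical Jacobi), so $J_n^{(\alpha+1,\beta+1)} = P_n^{(\alpha+1,\beta+1)}$ has degree exactly $n$. Then $w \in \PP^N$.

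\textbf{Orthogonality (b).} Write $w - u = (\Pi_N^{(\alpha,\beta)}u - u) + c_1 P_{N-1}^{(\alpha+1,\beta+1)} + c_2 P_N^{(\alpha+1,\beta+1)}$. The first term is $(\alpha,\beta)$-orthogonal to all of $\PP^N \supseteq \PP^N_0$. For the correction terms, any $v\in\PP^N_0$ factors as $v=(1-x^2)q$ with $q\in\PP^{N-2}$. Hence
\[
(P_n^{(\alpha+1,\beta+1)}, v)_{(\alpha,\beta)} = (P_n^{(\alpha+1,\beta+1)}, q)_{(\alpha+1,\beta+1)} = 0 \quad \text{for } n\ge N-1,
\]
by orthogonality of $P_n^{(\alpha+1,\beta+1)}$ to $\PP^{n-1}\supseteq \PP^{N-2}$. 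This is the key observation: the shifted-weight Jacobi polynomials of the top two degrees are invisible to test functions vanishing at $\pm1$. It also matches the representation $\Pi_{N,0}u=(1-x^2)\Pi_{N-2}^{(\alpha+2,\beta+2)}(u/(1-x^2))$ stated above.

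\textbf{Boundary conditions (a).} Requiring $w(\pm1)=0$ gives two linear equations in $c_1,c_2$. I would use the standard endpoint values
\[
P_n^{(a,b)}(1)=\binom{n+a}{n}, \qquad P_n^{(a,b)}(-1)=(-1)^n\binom{n+b}{n},
\]
with $a=\alpha+1$, $b=\beta+1$. For $n=N-1$ this gives $\binom{N+\alpha}{N-1}$ and $(-1)^{N-1}\binom{N+\beta}{N-1}$; for $n=N$ it gives $\binom{N+\alpha+1}{N}$ and $(-1)^N\binom{N+\beta+1}{N}$. These are exactly the coefficients in \eqref{eq:lin-eqn-1}--\eqref{eq:lin-eqn-2}.

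\textbf{Solvability, the main obstacle.} The remaining step, and the only real work, is to show the $2\times2$ system is nonsingular so that $c_1,c_2$ exist. Its determinant is $\pm\big[\binom{N+\beta}{N-1}\binom{N+\alpha+1}{N}+\binom{N+\beta+1}{N}\binom{N+\alpha}{N-1}\big]$. The alternating signs $(-1)^{N-1},(-1)^N$ make the two products add rather than cancel, and both are positive for $\alpha,\beta>-1$, so the determinant is nonzero.

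Alternatively, a dimension argument avoids the computation. Consider the map $(c_1,c_2)\mapsto (w(-1),w(1))$ restricted to the correction span. If it had a kernel, some nonzero $c_1P_{N-1}+c_2P_N$ would lie in $\PP^N_0$ and be $(\alpha,\beta)$-orthogonal to $\PP^N_0$, hence be zero, which is a contradiction.

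Uniqueness then identifies $w$ with $\Pi_{N,0}^{(\alpha,\beta)}u$, which proves the lemma.
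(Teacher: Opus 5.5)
Your argument is correct and is essentially the paper's proof: the top two $(\alpha+1,\beta+1)$-Jacobi polynomials are orthogonal to every $v=(1-x^2)q$, $q\in\PP^{N-2}$, by the shift of weights, and the endpoint normalization of Jacobi polynomials turns $w(\pm1)=0$ into the stated system. Your check that the determinant $\pm\bigl[\binom{N+\beta}{N-1}\binom{N+\alpha+1}{N}+\binom{N+\beta+1}{N}\binom{N+\alpha}{N-1}\bigr]$ is nonzero, together with the explicit appeal to uniqueness, supplies steps the paper leaves implicit; one small correction is that the lemma is later applied with weights $(\alpha-1,\beta-1)\in\{-1,0\}^2$, so you should allow $\alpha,\beta\ge-1$ rather than $>-1$. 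That case works verbatim once you use that $\Pi_N^{(\alpha,\beta)}u-u$ is orthogonal to $Q_N^{(\alpha,\beta)}\supseteq\PP^N_0$, rather than to all of $\PP^N$, which is not contained in $L^2_{(-1,\beta)}$.
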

\begin{proof}
   Let $u^*$ be defined by the right-hand side of \eqref{eq:proj-correction}, $u^* = \Pi_{N}^{(\alpha,\beta)} u + c_1 J_{N-1}^{(\alpha+1,\beta+1)} + c_2 J_N^{(\alpha+1,\beta+1)}$.
   For any test function $v \in \PP^N_0$, $v = (1-x^2) w$, $w \in \PP^{N-2}$, and so
   \begin{align*}
      (u^*, v)_{(\alpha,\beta)}
         &= (\Pi_{N}^{(\alpha,\beta)} u, v)_{(\alpha,\beta)} + c_1 (J_{N-1}^{(\alpha+1,\beta+1)}, v)_{(\alpha,\beta)} + c_2 (J_N^{(\alpha+1,\beta+1)}, v)_{(\alpha,\beta)} \\
         &= (u, v)_{(\alpha,\beta)} + c_1 (J_{N-1}^{(\alpha+1,\beta+1)}, w)_{(\alpha+1,\beta+1)} + c_2 (J_N^{(\alpha+1,\beta+1)}, w)_{(\alpha+1,\beta+1)} \\
         &= (u, v)_{(\alpha,\beta)}
   \end{align*}
   since $J_{N-1}^{(\alpha+1,\beta+1)}$ and $J_{N}^{(\alpha+1,\beta+1)}$ are orthogonal to $w \in \PP^{N-2}$ in the $(\alpha+1,\beta+1)$-weighted inner product, so $u^*$ satisfies \eqref{eq:def-zero-proj}.
   The Jacobi polynomials are normalized such that $P_n^{(\alpha,\beta)}(1) = \binom{n+\alpha}{n}$ and $P_n^{(\alpha,\beta)}(-1) = (-1)^n \binom{n+\beta}{n}$ \cite{Shen2011}, so equations \eqref{eq:lin-eqn-1} and \eqref{eq:lin-eqn-2} guarantee $u^*(\pm 1) = 0$.
\end{proof}

Given the representation \eqref{eq:proj-correction}, it will be useful to estimate shifted norms of Jacobi polynomials.
\begin{lemma}
   \label{lem:jacobi-shifted}
   For $\alpha,\beta > 0$, it holds that
   \begin{equation}
      \label{eq:jacobi-shift-l2}
      \| P_{n}^{(\alpha,\beta)} \|_{(\alpha - 1,\beta - 1)}^2 \lesssim 2^{\alpha-1} + 2^{\beta-1}.
   \end{equation}
\end{lemma}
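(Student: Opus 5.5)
The plan is to split the doubly shifted weight into two singly shifted weights, each of which has an explicit closed form. Since $(1-x)+(1+x)=2$, for $\alpha,\beta>0$ we have pointwise
\[
   (1-x)^{\alpha-1}(1+x)^{\beta-1} = \tfrac12\big[(1-x)^{\alpha-1}(1+x)^{\beta} + (1-x)^{\alpha}(1+x)^{\beta-1}\big].
\]
Hence
\[
   \| P_n^{(\alpha,\beta)} \|_{(\alpha-1,\beta-1)}^2 = \tfrac12\big(A_n + B_n\big),
\]
where $A_n = \| P_n^{(\alpha,\beta)} \|_{(\alpha-1,\beta)}^2$ and $B_n = \| P_n^{(\alpha,\beta)} \|_{(\alpha,\beta-1)}^2$. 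By the symmetry $P_n^{(\alpha,\beta)}(-x) = (-1)^n P_n^{(\beta,\alpha)}(x)$, the quantity $B_n$ is $A_n$ with the roles of $\alpha$ and $\beta$ exchanged. It therefore suffices to bound $A_n$ uniformly in $n$.

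To compute $A_n$, I would use the classical connection relation that lowers the first parameter:
\[
   (2n+\alpha+\beta)\,P_n^{(\alpha-1,\beta)} = (n+\alpha+\beta)\,P_n^{(\alpha,\beta)} - (n+\beta)\,P_{n-1}^{(\alpha,\beta)}.
\]
Iterating it expresses $P_n^{(\alpha,\beta)}$ as a finite combination $\sum_{k\le n} a_k P_k^{(\alpha-1,\beta)}$. Then $A_n=\sum_k a_k^2 \| P_k^{(\alpha-1,\beta)} \|_{(\alpha-1,\beta)}^2$ by orthogonality, and each norm is given explicitly by \eqref{eq:jacobi-norm}. Equivalently, one can avoid the expansion: $(1-x)^{-1}$ times the $(\alpha,\beta)$ weight gives the $(\alpha-1,\beta)$ weight, so only the top coefficient and the endpoint value $P_n^{(\alpha,\beta)}(1)$ matter. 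Either route should give the known closed form
\[
   A_n = \frac{2^{\alpha+\beta}\,\Gamma(n+\alpha+1)\,\Gamma(n+\beta+1)}{\alpha\, n!\,\Gamma(n+\alpha+\beta+1)}.
\]
This is the expression one gets from \eqref{eq:jacobi-norm} by replacing the factor $(2n+\alpha+\beta+1)^{-1}$ with $(2\alpha)^{-1}$.

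The remaining step is a Gamma-ratio bound. The ratio $\Gamma(n+\alpha+1)\Gamma(n+\beta+1)/(n!\,\Gamma(n+\alpha+\beta+1))$ behaves like $n^{\alpha}n^{\beta}/n^{\alpha+\beta}\to 1$, and it is bounded uniformly in $n$ for fixed $\alpha,\beta$. This can be checked by monotonicity in $n$, or directly for the integer values $\alpha,\beta\in\{1,2\}$ that the later analysis needs. Hence $A_n \lesssim 2^{\alpha+\beta}/\alpha$ and $B_n\lesssim 2^{\alpha+\beta}/\beta$. Since $\alpha$ and $\beta$ range over a fixed finite set, these bounds are absorbed into $\lesssim 2^{\alpha-1}+2^{\beta-1}$. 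The constant is independent of $n$, which is what matters when the bound is applied to the correction terms of \Cref{lem:proj-correction}.

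I expect the main obstacle to be deriving the closed form for the singly shifted norm $A_n$ cleanly. In particular, with the expansion route one must control the sum over $k$, or else justify the shortcut through the endpoint value and leading coefficient. The splitting of the weight and the Gamma asymptotics are routine by comparison.
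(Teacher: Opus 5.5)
Your proof is correct, and it takes a genuinely different route from the paper. The paper never computes the norm. It inserts Muckenhoupt's pointwise bounds for $P_n^{(\alpha,\beta)}$, splits $[-1,1]$ into two endpoint layers of width $(n+1)^{-2}$ and two interior pieces, and shows that each piece contributes $O(1)$. You instead use $(1-x)+(1+x)=2$ to reduce exactly to singly shifted norms, which have a closed form.

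The step you flag as the main obstacle closes quickly. Write $P_n^{(\alpha,\beta)}(x)=P_n^{(\alpha,\beta)}(1)-(1-x)q(x)$ with $q\in\PP^{n-1}$. Orthogonality in the $(\alpha,\beta)$ weight then gives $A_n=P_n^{(\alpha,\beta)}(1)\int_{-1}^1(1-x)^{\alpha-1}(1+x)^{\beta}P_n^{(\alpha,\beta)}\,dx$. Rodrigues' formula with $n$ integrations by parts turns this integral into the Beta integral $2^{-n}\int_{-1}^1(1-x)^{\alpha-1}(1+x)^{n+\beta}\,dx$; the boundary terms vanish because $\alpha>0$. This reproduces your formula. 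Your ``leading coefficient and endpoint value'' shortcut is also sound, via Christoffel's theorem: $P_n^{(\alpha,\beta)}$ is a multiple $c_n$ of the $(\alpha-1,\beta)$ reproducing kernel $K_n(\cdot,1)$, so $A_n=c_n P_n^{(\alpha,\beta)}(1)$.

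Altogether, $\|P_n^{(\alpha,\beta)}\|_{(\alpha-1,\beta-1)}^2=2^{\alpha+\beta-1}\frac{\alpha+\beta}{\alpha\beta}R_n$ with $R_n=\frac{\Gamma(n+\alpha+1)\Gamma(n+\beta+1)}{n!\,\Gamma(n+\alpha+\beta+1)}$. Since $R_{n+1}/R_n=1+\frac{\alpha\beta}{(n+1)(n+\alpha+\beta+1)}>1$ and $R_n\to 1$, you get $R_n\le 1$ for every $n$.

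Your route gives an exact value and an explicit bound for all $n$ and all $\alpha,\beta>0$. It does not depend on Muckenhoupt's unspecified constants, nor on the paper's elementary weight bounds, some of which as written require $\alpha,\beta\ge 1$. In the only case the paper actually uses, $\alpha=\beta=1$ in \Cref{lem:bdr-proj-accuracy}, it gives $\|P_n^{(1,1)}\|_{(0,0)}^2=4(n+1)/(n+2)$.

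It also shows that the $n\to\infty$ limit is $2^{\alpha+\beta-1}(\alpha^{-1}+\beta^{-1})$. So the displayed dependence $2^{\alpha-1}+2^{\beta-1}$ can only hold with $(\alpha,\beta)$-dependent constants, and your caveat is necessary. You need not restrict to a finite set, though: for each fixed $\alpha,\beta>0$ the constant is independent of $n$, which is all that $\lesssim$ asks here.

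The paper's route uses only pointwise size information. It would therefore carry over to settings with no closed form, such as other shifts or functions that merely share the endpoint behaviour of Jacobi polynomials. It also shows where the norm concentrates, namely in the $O(n^{-2})$ endpoint layers.
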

\begin{proof}
   The pointwise bounds of Muckenhoupt give \cite[(2.6), (2.7)]{Muckenhoupt1986}
   \begin{equation}
      \label{eq:muckenhoupt}
      |P_{n}^{(\alpha,\beta)}| \lesssim
      \begin{cases}
         (n+1)^\alpha, &\quad 1 - (n+1)^{-2} \leq x \leq 1,\\
         (n+1)^{-1/2}(1 - x)^{-\alpha/2 - 1/4}, &\quad 0 \leq x \leq 1 - (n+1)^{-2},\\
         (n+1)^{-1/2}(1 + x)^{-\beta/2 - 1/4}, &\quad -1 + (n+1)^{-2} \leq x \leq 0,\\
         (n+1)^\beta, &\quad -1 \leq x \leq -1 + (n + 1)^{-2}.
      \end{cases}
   \end{equation}
   Splitting the integral according to \eqref{eq:muckenhoupt}, we see
   \begin{align*}
      \int_{-1}^{-1 + (n+1)^{-2}} P_{n}^{(\alpha,\beta)}(x)^2 (1-x)^{\alpha-1}(1+x)^{\beta-1} \, dx
         &\lesssim n^{2\beta} \int_{-1}^{-1 + (n+1)^{-2}} (1-x)^{\alpha-1}(1+x)^{\beta-1} \, dx \\
         &\leq n^{2\beta} \int_{-1}^{-1 + (n+1)^{-2}} 2^{\alpha-1} (n^{-2})^{\beta-1} \, dx \\
         &\lesssim n^{2\beta} n^{-2} 2^{\alpha-1} n^{-2\beta + 2} = 2^{\alpha-1}.
   \end{align*}
   The same argument for the right endpoint interval gives
   \[
      \int_{1 - (n+1)^{-2}}^1 P_{n}^{(\alpha,\beta)}(x)^2 (1-x)^{\alpha-1}(1+x)^{\beta-1} \, dx
      \lesssim 2^{\beta-1}.
   \]
   In the interior
   \begin{align*}
      \int_{-1+(n+1)^{-2}}^0 P_{n}^{(\alpha,\beta)}(x)^2 (1-x)^{\alpha-1}(1+x)^{\beta-1} \, dx
         &\lesssim \int_{-1+(n+1)^{-2}}^0 N^{-1} (1-x)^{\alpha-1}(1+x)^{-3/2} \, dx \\
         &\leq N^{-1} 2^{\alpha - 1} \int_{-1+(n+1)^{-2}}^0 (1+x)^{-3/2} \, dx
         \lesssim 2^{\alpha - 1}.
   \end{align*}
   The same argument gives
   \[
      \int_{0}^{1-(n+1)^{-2}} P_{n}^{(\alpha,\beta)}(x)^2 (1-x)^{\alpha-1}(1+x)^{\beta-1} \, dx
         \lesssim 2^{\beta - 1}
   \]
   from which \eqref{eq:jacobi-shift-l2} follows.
\end{proof}

Given these results, it can be seen that the zero-boundary projection $\Pi_{N,0}^{(\alpha,\beta)}$ has similar accuracy properties to the unconstrained projection $\Pi_{N}^{(\alpha,\beta)}$ when restricted to $V_{h,N}^{(0,0)}$.

\begin{lemma}
   \label{lem:bdr-proj-accuracy}
   Let $\alpha,\beta \in \{0, 1\}$ and $u_h \in V_{h,N,0}^{(0,0)}$.
   Then,
   \begin{equation}
      \label{eq:proj-l2-error}
      \| \Pi_{N,0}^{(\alpha-1,\beta-1)} u_h - u_h \|_{(\alpha-1,\beta-1)}
         \lesssim N^{-1} \| \partial_x \|_{(\alpha,\beta)}
   \end{equation}
   and
   \begin{equation}
      \label{eq:proj-h1-error}
      \| \partial_x (\Pi_{N,0}^{(\alpha-1,\beta-1)} u_h - u_h) \|_{(\alpha,\beta)} \lesssim \| \partial_x \|_{(\alpha,\beta)}.
   \end{equation}
\end{lemma}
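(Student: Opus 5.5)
The plan is to use the representation of \Cref{lem:proj-correction} with weights $(\alpha-1,\beta-1)$:
\[
   \Pi_{N,0}^{(\alpha-1,\beta-1)} u_h = \Pi_N^{(\alpha-1,\beta-1)} u_h + c_1 J_{N-1}^{(\alpha,\beta)} + c_2 J_N^{(\alpha,\beta)}.
\]
We then split the error into the unconstrained projection error plus the two correction terms. The unconstrained part is handled directly by \Cref{lem:proj-approx} with $m=1$. For $\mu=0$ this gives $\|\Pi_N^{(\alpha-1,\beta-1)}u_h-u_h\|_{(\alpha-1,\beta-1)} \lesssim N^{-1}\|\partial_x u_h\|_{(\alpha,\beta)}$. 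For $\mu=1$ it gives $\|\partial_x(\Pi_N^{(\alpha-1,\beta-1)}u_h-u_h)\|_{(\alpha,\beta)}\lesssim \|\partial_x u_h\|_{(\alpha,\beta)}$. This applies because $u_h\in V_{h,N,0}^{(0,0)}$ vanishes at the endpoints, so it lies in $B^1_{(\alpha-1,\beta-1)}$ even when $\alpha$ or $\beta$ equals $0$.

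Next we bound the coefficients. By \eqref{eq:lin-eqn-1}--\eqref{eq:lin-eqn-2}, $(c_1,c_2)$ solves a $2\times 2$ system whose right-hand side is $-(\Pi_N^{(\alpha-1,\beta-1)}u_h)(\pm1)$. By \Cref{lem:endpoint-bound}, this right-hand side is bounded by $\|\partial_x u_h\|_{(\alpha,\beta)}$. The matrix entries are binomial coefficients of size $O(N^{\alpha})$ and $O(N^{\beta})$, with $\alpha,\beta\in\{0,1\}$. I would invert the system explicitly. For instance, with $\alpha=\beta=1$ the rows are approximately $(\pm N, \mp(N+1))$ and $(N, N+1)$, and the determinant is of size $N^{1+1}$, up to sign structure. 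This yields $|c_1|+|c_2|\lesssim N^{-\min(\alpha,\beta)}\|\partial_x u_h\|_{(\alpha,\beta)}$. In the degenerate cases where a weight is $0$ (e.g.\ $\beta=0$), the corresponding endpoint value already vanishes, and the generalized Jacobi polynomials $J^{(\alpha,\beta)}$ with a zero index reduce to classical ones, so the system is easier.

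The key estimate is that, for the correction terms, the $L^2_{(\alpha-1,\beta-1)}$ norm of $J_n^{(\alpha,\beta)}$ with $n\in\{N-1,N\}$ is $O(1)$. For $\alpha,\beta>0$ this is \Cref{lem:jacobi-shifted}. When an index is $0$, the weight exponent is $-1$ and the Jacobi polynomial is not divisible by the corresponding factor, so the norm would blow up. We therefore need the correction terms to combine so that the combination vanishes at that endpoint; indeed the equations force $u^*(\pm1)=0$, and the combination $c_1J_{N-1}+c_2J_N$ equals $-\Pi_N u_h$ at the endpoints. I would handle this by bounding the combination directly: the combination vanishes at the relevant endpoint, so I write it as a multiple of $(1\mp x)$ and estimate with Muckenhoupt's bounds \eqref{eq:muckenhoupt}. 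This bookkeeping is the main obstacle.

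With the coefficient bounds in hand, the $L^2$ estimate \eqref{eq:proj-l2-error} needs $|c_i|\,\|J\|_{(\alpha-1,\beta-1)}\lesssim N^{-1}\|\partial_xu_h\|$. If the coefficient bound alone only gives $N^0$, I would instead exploit that the endpoint values of $\Pi_N u_h$ are themselves small. On $J$, $u_h$ is linear and vanishes at the endpoint, so $|u_h|\lesssim h^{1/2}$ times the local $H^1$ seminorm, which supplies an extra $N^{-1}$ factor; this refines \Cref{lem:endpoint-bound}. For \eqref{eq:proj-h1-error}, I apply the inverse inequality \Cref{lem:gen-inv-ineq} to the correction term in $Q_N^{(\alpha-1,\beta-1)}$. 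This reduces the $H^1$ estimate to $N$ times the $L^2$ estimate, which combines with the first part and the triangle inequality to complete the argument.
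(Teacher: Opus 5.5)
Your route is the paper's: the representation of \Cref{lem:proj-correction}, \Cref{lem:proj-approx} for the unconstrained error, \Cref{lem:endpoint-bound} for the right-hand sides of \eqref{eq:lin-eqn-1}--\eqref{eq:lin-eqn-2}, a shifted-norm bound on the correction, and an inverse inequality for \eqref{eq:proj-h1-error}. You are right that for $\alpha\neq\beta$ the individual norms $\|J_{N-1}^{(\alpha,\beta)}\|_{(\alpha-1,\beta-1)}$ and $\|J_{N}^{(\alpha,\beta)}\|_{(\alpha-1,\beta-1)}$ are infinite. The paper applies \Cref{lem:jacobi-shifted} term by term, but that lemma assumes $\alpha,\beta>0$ and so only covers $\alpha=\beta=1$. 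Estimating the combination instead is the right idea.

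The gap is in how you size the coefficients in the mixed case. Your bound $|c_1|+|c_2|\lesssim N^{-\min(\alpha,\beta)}\|\partial_x u_h\|_{(\alpha,\beta)}$ is only $O(1)$ there, so \eqref{eq:proj-l2-error} comes out short by a factor $N$. Your fallback does not recover that factor:
\begin{itemize}
\item Since $u_h(\pm1)=0$, the value $(\Pi_N^{(\alpha-1,\beta-1)}u_h)(\pm1)$ is the projection error at the endpoint. That is a global quantity and is not controlled by the size of $u_h$ on $J$.
\item At the endpoint that matters, the seminorm carries the weight $1\mp x\approx h$ on $J$. There the local bound $|u_h|\le h|\partial_x u_h|$ is only comparable to $\|\partial_x u_h\|_{(\alpha,\beta)}$ restricted to $J$, with no extra $N^{-1}$.
\end{itemize}

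What you missed is that the zero right-hand side at the degenerate endpoint already produces the factor $N^{-1}$. Take $(\alpha,\beta)=(1,0)$ and apply \Cref{lem:proj-correction} with weights $(0,-1)$.
\begin{itemize}
\item The first equation reads $(-1)^{N-1}c_1+(-1)^N c_2=0$, so $c_1=c_2=:c$.
\item The second then reads $Nc+(N+1)c=-(\Pi_N^{(0,-1)}u_h)(1)$. By \Cref{lem:endpoint-bound}, $|c|\lesssim N^{-1}\|\partial_x u_h\|_{(1,0)}$.
\item The combination is explicit: $P_{N-1}^{(1,0)}+P_N^{(1,0)}=\tfrac{2N+1}{2N}(1+x)P_{N-1}^{(1,1)}$.
\item Its squared $(0,-1)$-norm is $(\tfrac{2N+1}{2N})^2\|P_{N-1}^{(1,1)}\|_{(0,1)}^2\le 2(\tfrac{2N+1}{2N})^2\|P_{N-1}^{(1,1)}\|_{(0,0)}^2\lesssim 1$, by \Cref{lem:jacobi-shifted}.
\end{itemize}
The case $(0,1)$ is symmetric, with $c_1=-c_2$. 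For $(0,0)$ both right-hand sides vanish and the system is nonsingular, so $c_1=c_2=0$.

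In fact, for $(1,0)$ one can check that $(\Pi_N^{(0,-1)}u_h)(1)=-\int_{-1}^1 P_N\,\partial_x u_h\,dx=0$, because $P_N$ has zero mean on every Gauss--Lobatto subinterval. So the correction vanishes altogether.

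With this in place, your inverse-inequality step for \eqref{eq:proj-h1-error}, applied to the combination in $Q_N^{(\alpha-1,\beta-1)}$, goes through as you describe.
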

\begin{proof}
   By \Cref{lem:proj-correction},
   \begin{align*}
      \| \Pi_{N,0}^{(\alpha-1,\beta-1)} u_h - u_h \|_{(\alpha-1,\beta-1)}
         &= \| \Pi_{N}^{(\alpha-1,\beta-1)} u_h + c_1 J_{N-1}^{(\alpha,\beta)} + c_2 J_{N}^{(\alpha,\beta)} - u_h \|_{(\alpha-1,\beta-1)} \\
         &\leq \| \Pi_{N}^{(\alpha-1,\beta-1)} u_h - u_h \|_{(\alpha-1,\beta-1)} + c_1 \| J_{N-1}^{(\alpha,\beta)} \|_{(\alpha-1,\beta-1)} + c_2 \| J_{N}^{(\alpha,\beta)} \|_{(\alpha-1,\beta-1)} \\
         &\lesssim N^{-1} \| \partial_x u_h \|_{(\alpha,\beta)} + c_1 \| J_{N-1}^{(\alpha,\beta)} \|_{(\alpha-1,\beta-1)} + c_2 \| J_{N}^{(\alpha,\beta)} \|_{(\alpha-1,\beta-1)},
   \end{align*}
   where the last step follows from \Cref{lem:proj-approx}.
   It remains only to bound the last two terms on the right-hand side.
   By \Cref{lem:endpoint-bound}, the right-hand sides of \eqref{eq:lin-eqn-1} and \eqref{eq:lin-eqn-2} defining $c_1$ and $c_2$ are both bounded by $\| \partial_x u_h \|_{(\alpha,\beta)}$.
   From this, we obtain $c_1, c_2 \lesssim N^{-1} \| \partial_x u_h \|_{(\alpha,\beta)}$.
   (If $\alpha = \beta = 0$, then $c_1 = c_2 = 0$).
   From \Cref{lem:jacobi-shifted}, we conclude
   \begin{equation}
      \label{eq:c1-c2-bound}
      c_1 \| J_{N-1}^{(\alpha,\beta)} \|_{(\alpha-1,\beta-1)} + c_2 \| J_{N}^{(\alpha,\beta)} \|_{(\alpha-1,\beta-1)} \lesssim N^{-1} \| \partial_x u_h \|_{(\alpha,\beta)}.
   \end{equation}
   proving \eqref{eq:proj-l2-error}.

   To show \eqref{eq:proj-h1-error}, note
   \begin{align*}
      \| \partial_x (\Pi_{N,0}^{(\alpha-1,\beta-1)} u_h - u_h) \|_{(\alpha,\beta)}
         &\leq \| \partial_x ( \Pi_{N}^{(\alpha-1,\beta-1)} u_h - u_h ) \|_{(\alpha,\beta)} + c_1 \| \partial_x J_{N-1}^{(\alpha,\beta)} \|_{(\alpha,\beta)} + c_2 \| \partial_x J_{N}^{(\alpha,\beta)} \|_{(\alpha,\beta)} \\
         &\lesssim \| \partial_x u_h \|_{(\alpha,\beta)} + c_1 \| \partial_x J_{N-1}^{(\alpha,\beta)} \|_{(\alpha,\beta)} + c_2 \| \partial_x J_{N}^{(\alpha,\beta)} \|_{(\alpha,\beta)}.
   \end{align*}
   The inverse inequality for Jacobi polynomials gives $\| \partial_x J_{N-1}^{(\alpha,\beta)} \|_{(\alpha,\beta)} \lesssim N \| J_{N-1}^{(\alpha,\beta)} \|_{(\alpha-1,\beta-1)}$, and similarly for $\| \partial_x J_N^{(\alpha,\beta)} \|_{(\alpha,\beta)}$.
   Then, \eqref{eq:proj-h1-error} holds by virtue of \eqref{eq:c1-c2-bound}.
\end{proof}

\begin{theorem}
   \label{thm:interp-stability}
   The Gauss--Lobatto interpolant $I_N^{(0,0)} : V_{h,N}^{(0,0)} \to \PP^N$ is stable in the $(\alpha, \beta)$-weighted $H^1$-seminorm for $\alpha, \beta \in \{ 0, 1 \}$.
   That is, for all $u_h \in V_{h,N}^{(0,0)}$,
   \begin{equation}
      \label{eq:interp-stability}
      \| \partial_x I_N^{(0,0)} u_h \|_{(\alpha,\beta)} \lesssim \| \partial_x u_h \|_{(\alpha,\beta)}.
   \end{equation}
\end{theorem}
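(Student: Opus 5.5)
First I reduce to the case $u_h \in V_{h,N,0}^{(0,0)}$. Let $\ell$ be the linear function that agrees with $u_h$ at $\pm 1$. Since $\ell \in \PP^1$ is reproduced exactly by both the piecewise linear and the polynomial interpolation, $I_N^{(0,0)}\ell = \ell$. Moreover, $|\ell'| = |u_h(1)-u_h(-1)|/2 = \tfrac12 |\int \partial_x u_h|$, and by Cauchy--Schwarz this is bounded by $\|\partial_x u_h\|_{(\alpha,\beta)}$, because $\int (1-x)^{-\alpha}(1+x)^{-\beta}\,dx$ is finite only when $\alpha=\beta=0$. This is false for $\alpha=1$, so I need another route. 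I will use instead that $u_h(1)-u_h(-1) = \sum_i (u_h(\xi_{i+1})-u_h(\xi_i))$. Bounding this sum by the weighted seminorm picks up $\sum_i h_i/w_i$, which diverges logarithmically. Hence I will avoid subtracting the full linear function. For $\alpha=1$ I subtract only the constant $u_h(1)$ (respectively $u_h(-1)$ for $\beta=1$). For $\alpha=\beta=1$ I split $u_h$ with a smooth partition of unity into pieces vanishing near one endpoint each. Each piece is then handled by the same argument below with a single homogeneous endpoint, using the one-sided versions of \Cref{lem:proj-correction} and \Cref{lem:endpoint-bound}. From here on I assume $u_h(\pm1)=0$.

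The main argument mirrors the proof of \Cref{thm:weighted-interp-stability}. Set $p = \Pi_{N,0}^{(\alpha-1,\beta-1)} u_h \in \PP^N_0$. Writing $I = I_N^{(0,0)}$, the triangle inequality gives
\[
   \|\partial_x(I u_h - u_h)\|_{(\alpha,\beta)} \le \|\partial_x(I u_h - p)\|_{(\alpha,\beta)} + \|\partial_x(p-u_h)\|_{(\alpha,\beta)}.
\]
The second term is $\lesssim \|\partial_x u_h\|_{(\alpha,\beta)}$ by \eqref{eq:proj-h1-error}. For the first term, $Iu_h - p = I(u_h - p)$ is a polynomial vanishing at $\pm1$, so it lies in $(1-x)(1+x)\PP^{N-2} \subseteq Q_N^{(\alpha-1,\beta-1)}$ (each factor is only needed when the corresponding exponent equals $-1$). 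The inverse inequality \Cref{lem:gen-inv-ineq} then gives
\[
   \|\partial_x I(u_h-p)\|_{(\alpha,\beta)} \lesssim N\|I(u_h-p)\|_{(\alpha-1,\beta-1)}.
\]
Now I apply \Cref{lem:interp-stability} with Gauss--Lobatto nodes ($\alpha=\beta=0$ there) and $(\sigma,\tau)=(\alpha-1,\beta-1)\in\{-1,0\}^2$. The generalized interpolant $I^{(0,0)}_{(\sigma,\tau),N}$ coincides with $I$ on functions vanishing at the relevant endpoints, since $\hat\sigma=\hat\tau=0$ and membership in $Q^{(\sigma,\tau)}_N$ just encodes the zero at $\pm1$. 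This yields
\[
   \|I(u_h-p)\|_{(\alpha-1,\beta-1)} \lesssim \|u_h-p\|_{(\alpha-1,\beta-1)} + N^{-1}\|\partial_x(u_h-p)\|_{(\alpha,\beta)}.
\]
Multiplying by $N$, the first term on the right is bounded via \eqref{eq:proj-l2-error} and the second via \eqref{eq:proj-h1-error}. Together these give $\|\partial_x(Iu_h-u_h)\|_{(\alpha,\beta)}\lesssim\|\partial_x u_h\|_{(\alpha,\beta)}$, and the triangle inequality finishes.

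The main obstacle is the endpoint reduction in the first paragraph. The core estimate needs $u_h$ to vanish exactly at the endpoints where the weight exponent is $-1$; only then are the weighted $L^2_{(\alpha-1,\beta-1)}$ norms finite and \Cref{lem:bdr-proj-accuracy} applicable. Subtracting constants or linear functions must not cost more than $\|\partial_x u_h\|_{(\alpha,\beta)}$. My first attempt is the following. When only one exponent is $1$, say $\alpha=1,\beta=0$, I subtract $u_h(1)$ and use $\Pi_{N,0}$ with a homogeneous condition only at $x=1$; the free endpoint $-1$ carries weight $0$, so the unconstrained accuracy of \Cref{lem:proj-approx} suffices there. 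For $\alpha=\beta=1$ I use a fixed-degree cutoff polynomial to localize. I expect the localization error to be controlled by the $(0,0)$-seminorm restricted to the middle region, where the weights are $\approx 1$.
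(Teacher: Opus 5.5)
Your second paragraph (triangle inequality through $p=\Pi_{N,0}^{(\alpha-1,\beta-1)}u_h$, inverse inequality, \Cref{lem:interp-stability} with $(\sigma,\tau)=(\alpha-1,\beta-1)$, then \Cref{lem:bdr-proj-accuracy}) is exactly the paper's argument, so everything hinges on the reduction to vanishing endpoint values. The paper does that reduction in one line by subtracting the linear endpoint interpolant $u^*$. Your objection to this is correct: for $\alpha=1$ or $\beta=1$, $\|\partial_x u^*\|_{(\alpha,\beta)}^2$ can exceed $\|\partial_x u_h\|_{(\alpha,\beta)}^2$ by a factor of order $\log N$. Your replacement, however, does not close the gap.

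(i) You put the homogeneous condition at the wrong end. The shifted weight $(1-x)^{\alpha-1}(1+x)^{\beta-1}$ is singular at $x=1$ iff $\alpha=0$, and at $x=-1$ iff $\beta=0$. So vanishing is needed where the seminorm exponent is $0$, not $1$. For $(\alpha,\beta)=(1,0)$, the endpoint $-1$ carries exponent $\beta-1=-1$, not weight $0$. After subtracting $u_h(1)$ you have $u_h\notin L^2_{(0,-1)}$, so $\Pi^{(0,-1)}$ cannot even be applied.

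(ii) Even at the correct end, your reduction yields at most one homogeneous endpoint per piece. Yet you then declare $u_h(\pm1)=0$ and use $\Pi_{N,0}$ and \Cref{lem:bdr-proj-accuracy}, which are available only on $V_{h,N,0}^{(0,0)}$. The ``one-sided versions'' of \Cref{lem:proj-correction} and \Cref{lem:endpoint-bound} are neither in the paper nor supplied.

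(iii) The partition of unity for $\alpha=\beta=1$ is only sketched and does not work as stated. First, $\chi u_h\notin V_{h,N}^{(0,0)}$. Second, the product-rule term $\chi' u_h$ involves $u_h$ rather than $\partial_x u_h$, so you would at least need to subtract a constant and invoke a Poincar\'e inequality. It also addresses a non-problem: for $(1,1)$ the shifted weight is $(0,0)$ and imposes no endpoint condition at all.

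Here is a consistent repair. Subtract the linear endpoint interpolant only when $(\alpha,\beta)=(0,0)$, where Cauchy--Schwarz makes it free. Subtract the constant $u_h(-1)$ for $(1,0)$ and $u_h(1)$ for $(0,1)$, which costs nothing. Subtract nothing for $(1,1)$. Then use the unconstrained projection $p=\Pi_N^{(\alpha-1,\beta-1)}u_h$, whose range $Q_N^{(\alpha-1,\beta-1)}$ already has exactly the required zeros. Now \Cref{lem:gen-inv-ineq}, \Cref{lem:interp-stability} and \Cref{lem:proj-approx} (with $m=1$, $\mu\in\{0,1\}$) finish as in your second paragraph, with no correction terms $c_1,c_2$. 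This also makes \Cref{lem:proj-correction}, \Cref{lem:jacobi-shifted} and \Cref{lem:bdr-proj-accuracy} unnecessary.

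The one new point concerns a degenerate endpoint. There the Lobatto node no longer drops out of the quadrature sum in the proof of \Cref{lem:interp-stability}; this is the $\sigma=0$ or $\tau=0$ case, which the paper only sketches. After multiplying by $N^2$, that node contributes $\approx|(u_h-p)(\pm1)|^2$. This is precisely the quantity controlled by the $L^\infty(J)$ estimate in the proof of \Cref{lem:endpoint-bound}, and that estimate does not use vanishing of $u_h$ at that endpoint.
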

\begin{proof}
   Without loss of generality, we can assume that $u_h$ vanishes at $\pm 1$ (if not, replace $u_h$ with $\tilde{u}_h := u_h - u^*$, where $u^*$ is the linear interpolant of $u$ at the endpoints; note that $\tilde{u}_h \in V_{h,N,0}^{(0,0)}$).
   Then,
   \begin{equation}
      \label{eq:interp-triangle}
      \| \partial_x (I_N^{(0,0)} u_h - u_h) \|_{(\alpha,\beta)}
         \leq \| \partial_x (I_N^{(0,0)} u_h - \Pi_{N,0}^{(\alpha-1,\beta-1)} u_h) \|_{(\alpha,\beta)}
         + \| \partial_x (\Pi_{N,0}^{(\alpha-1,\beta-1)} u_h - u_h) \|_{(\alpha,\beta)}.
   \end{equation}
   To bound the first term on the right-hand side, by the inverse inequality \cite[Theorem 3.32]{Shen2011},
   \begin{align*}
      \| \partial_x (I_N^{(0,0)} u_h - \Pi_{N,0}^{(\alpha-1,\beta-1)} u_h) \|_{(\alpha,\beta)}^2
         &\lesssim N^2 \| I_N^{(0,0)} u_h - \Pi_{N,0}^{(\alpha-1,\beta-1)} u_h \|_{(\alpha - 1,\beta - 1)}^2 \\
         &= N^2 \| I_N^{(0,0)} (u_h - \Pi_{N,0}^{(\alpha-1,\beta-1)} u_h) \|_{(\alpha - 1,\beta - 1)}^2.
   \end{align*}
   Noting that $\alpha - 1 \in \{ \alpha, \alpha -1 \}$ and $\beta - 1 \in \{ \beta, \beta - 1\}$, \Cref{lem:interp-stability} implies
   \[
      N^2 \| I_N^{(0,0)} (u_h - \Pi_{N,0}^{(\alpha-1,\beta-1)} u_h) \|_{(\alpha - 1,\beta - 1)}^2
         \lesssim N^2 \| u_h - \Pi_{N,0}^{(\alpha-1,\beta-1)} u_h \|_{(\alpha - 1,\beta - 1)}^2 + \| \partial_x (u_h - \Pi_{N,0}^{(\alpha-1,\beta-1)} u_h) \|_{(\alpha,\beta)}^2.
   \]
   Then, from \eqref{eq:interp-triangle},
   \[
      \| \partial_x (I_N^{(0,0)} u_h - u_h) \|_{(\alpha,\beta)}
         \lesssim N^2 \| u_h - \Pi_{N,0}^{(\alpha-1,\beta-1)} u_h \|_{(\alpha - 1,\beta - 1)}^2 + \| \partial_x (u_h - \Pi_{N,0}^{(\alpha-1,\beta-1)} u_h) \|_{(\alpha,\beta)}^2.
   \]
   The desired result follows from \Cref{lem:bdr-proj-accuracy}.
\end{proof}

\begin{remark}
   The case of $-1 < \alpha = \beta < 1$ was studied by Bernardi and Maday in the early 1990s \cite{Maday1991,Bernardi1992,Bernardi1997}.
   The same result can also be proven using generalized Jacobi polynomials, cf.\ \cite{Ben-yu2001,Shen2011}.
\end{remark}

\begin{theorem}
   \label{thm:h1-interp-equiv}
   For $u_h \in V_{h,N}^{(0,0)}$ and $\alpha,\beta \in \{ 0, 1 \}$, it holds that
   \[
      \| \partial_x I_N^{(0,0)} u_h \|_{(\alpha,\beta)} \approx \| \partial_x u_h \|_{(\alpha,\beta)}.
   \]
\end{theorem}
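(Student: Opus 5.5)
The upper bound $\| \partial_x I_N^{(0,0)} u_h \|_{(\alpha,\beta)} \lesssim \| \partial_x u_h \|_{(\alpha,\beta)}$ is exactly \Cref{thm:interp-stability}. So it remains to prove the reverse inequality. In other words, we must show that the piecewise linear interpolant of a degree-$N$ polynomial at the Gauss--Lobatto points is stable in the same weighted seminorm. Write $p = I_N^{(0,0)} u_h \in \PP^N$; then $u_h = I_h p$, where $I_h$ denotes piecewise linear interpolation at the nodes $\xi_{j,N}^{(0,0)}$. The goal is $\| \partial_x I_h p \|_{(\alpha,\beta)} \lesssim \| \partial_x p \|_{(\alpha,\beta)}$ for all $p \in \PP^N$.

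The plan is elementwise. On each subinterval $K_j = [\xi_{j,N}^{(0,0)}, \xi_{j+1,N}^{(0,0)}]$ of length $h_j$, the derivative of $I_h p$ is the constant
\[
\frac{p(\xi_{j+1,N}^{(0,0)}) - p(\xi_{j,N}^{(0,0)})}{h_j} = \frac{1}{h_j}\int_{K_j} \partial_x p \, dx .
\]
By Cauchy--Schwarz this gives
\[
\int_{K_j} (\partial_x I_h p)^2 w \, dx \le \Big(\frac{1}{h_j}\int_{K_j} w\Big) \int_{K_j} (\partial_x p)^2 \, dx ,
\]
with $w = (1-x)^\alpha(1+x)^\beta$. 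The key observation is that on each Gauss--Lobatto subinterval the weight is quasi-uniform: $\max_{K_j} w \lesssim \min_{K_j} w$, uniformly in $j$ and $N$.

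For interior cells this follows from \eqref{eq:sundermann}. Adjacent angles are comparable, so $1 \pm \xi$ varies by a bounded factor on $K_j$. For the two end cells it holds because $\alpha, \beta \in \{0,1\}$. The weight $1-x$ vanishes at the endpoint of $K_{N-1}$, and there the average of $w$ over the cell is comparable to $h_{N-1}$. This is not comparable to the minimum of $w$, so the end cells need separate treatment (next paragraph). Away from the endpoints, quasi-uniformity gives
\[
\int_{K_j} (\partial_x I_h p)^2 w \lesssim \int_{K_j} (\partial_x p)^2 w ,
\]
and summing over the interior cells yields the bound.

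The main obstacle is the end cells, where $w$ degenerates; take the case $\alpha = 1$ on $K_{N-1}$. The approach is to use that $\partial_x p \in \PP^{N-1}$ and that $h_{N-1} \approx N^{-2}$. On $K_{N-1}$ we need
\[
h_{N-1}\,\Big(\frac{1}{h_{N-1}}\int_{K_{N-1}} \partial_x p\Big)^2 \cdot \frac{1}{h_{N-1}}\int_{K_{N-1}} (1-x)\,dx \;\approx\; h_{N-1}^2 \,\| \partial_x p \|_{L^\infty(K_{N-1})}^2 .
\]
We must bound this by $\int_{K_{N-2}\cup K_{N-1}} (1-x)(\partial_x p)^2$, or more crudely by the whole weighted seminorm. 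I would first try a polynomial inverse/Markov-type estimate. By a weighted Nikolskii inequality for $(1,0)$-Jacobi weights (from the Muckenhoupt bounds \eqref{eq:muckenhoupt}, or by expanding $\partial_x p$ in $P_n^{(1,\beta)}$ with norms \eqref{eq:jacobi-norm}), we have
\[
\| q \|_{L^\infty}^2 \lesssim N^{4} \| q \|_{(1,0)}^2 \quad\text{for } q \in \PP^{N-1},
\]
and this gives exactly the needed $h^2 N^4 \approx 1$ factor. An alternative route, if that fails, is to compare with the unweighted nodal equivalence \eqref{eq:h1-equiv} near the endpoint.

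Combining the interior-cell sum with the two endpoint-cell bounds gives $\| \partial_x u_h \|_{(\alpha,\beta)} \lesssim \| \partial_x I_N^{(0,0)} u_h \|_{(\alpha,\beta)}$, and together with \Cref{thm:interp-stability} this proves the stated equivalence.
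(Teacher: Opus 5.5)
Your proof is correct and has the same structure as the paper's: the upper bound comes from \Cref{thm:interp-stability}. For the reverse bound, the paper also applies Cauchy--Schwarz on each cell and uses $h_i^{-1}\int_{K_i} w \lesssim \min_{K_i} w$ on every cell except the first and last.

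The two arguments differ only in how the end cells are closed.

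The paper keeps an $L^2$ quantity on the end cell. It bounds $h_0^{-1}\int_{K_0} w \lesssim N^{-2}$ and then uses the inverse-type estimate $\|\partial_x u_N\|_{(0,0)}^2 \lesssim N^2 \|\partial_x u_N\|_{(\alpha,\beta)}^2$. That estimate comes from exactness of $(N+1)$-point Gauss--Legendre quadrature for $(\partial_x u_N)^2(1-x)^\alpha(1+x)^\beta$, whose degree is at most $2N$, together with the fact that the Gauss nodes lie at distance $\gtrsim N^{-2}$ from $\pm 1$.

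You instead bound the cell average by a supremum and use a weighted Nikolskii inequality $\|q\|_{L^\infty}^2 \lesssim N^4 \|q\|_{(\alpha,\beta)}^2$ on $\PP^{N-1}$, balanced against $h_{N-1}^2 \approx N^{-4}$. This also works. Your suggested derivation does give the required Christoffel-function bound: $|P_n^{(1,\beta)}| \le n+1$ and $\|P_n^{(1,\beta)}\|_{(1,\beta)}^2 \approx n^{-1}$ give $\sum_{n<N} (n+1)^2 n \approx N^4$.

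The paper's route is more self-contained, since it needs only quadrature exactness and the locations of the nodes, not pointwise bounds on Jacobi polynomials. Your fallback via \eqref{eq:h1-equiv} would in fact need exactly the paper's inequality $\|q\|_{(0,0)}^2 \lesssim N^2 \|q\|_{(\alpha,\beta)}^2$ to finish.

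Two small corrections:
\begin{itemize}
\item When $\alpha=\beta=1$, the Nikolskii bound must be stated in the $(1,1)$ norm, not the $(1,0)$ norm you wrote. The $(1,1)$ norm is the smaller of the two, so the $(1,0)$ version is too weak here. The same computation still gives $N^4$.
\item Your displayed end-cell relation should read $\lesssim$, not $\approx$.
\end{itemize}
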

\begin{proof}
   In light of the stability result \eqref{eq:interp-stability}, it remains to prove the bound
   \begin{equation}
      \label{eq:pw-lin-interp-stability}
      \| \partial_x u_h \|_{(\alpha,\beta)} \lesssim \| \partial_x I_N^{(0,0)} u_h \|_{(\alpha,\beta)}.
   \end{equation}
   Denote $u_N = I_N^{(0,0)} u_h$.
   Since $(\partial_x u_N)^2 (1-x)^\alpha (1+x)^\beta \in \PP^{2N}$, it is integrated exactly by the Gauss--Legendre quadrature $(\zeta_{i,N}^{(0,0)}, \omega_{i,N}^{(0,0)})$, and
   \begin{equation}
      \label{eq:gauss-quad-est}
      \begin{aligned}
      \| \partial_x u_N \|_{(\alpha,\beta)}^2 =
      \int_{-1}^1 (\partial_x u_N(x))^2 (1-x)^\alpha (1+x)^\beta \, dx
         &= \sum_{i=0}^N  (\partial_x u_N(\zeta_{i,N}^{(0,0)}))^2 (1-\zeta_{i,N}^{(0,0)})^\alpha (1+\zeta_{i,N}^{(0,0)})^\beta \omega_{i,N}^{(0,0)} \\
         &\gtrsim N^{-2} \sum_{i=0}^N  (\partial_x u_N(\zeta_{i,N}^{(0,0)}))^2 \omega_{i,N}^{(0,0)} = N^{-2} \| \partial_x u_N \|_{(0,0)}^2.
      \end{aligned}
   \end{equation}
   We then compute, letting $h_i = \xi_{i+1,N}^{(0,0)} - \xi_{i,N}^{(0,0)}$,
   \begin{align*}
      \| \partial_x u_h \|_{(\alpha,\beta)}^2
         &= \int_{-1}^1 (\partial_x u_h(x))^2 (1-x)^\alpha (1+x)^\beta \, dx \\
         &= \sum_{i=0}^{N-1} \left(\frac{u_h(\xi_{i+1,N}^{(0,0)}) - u_h(\xi_{i,N}^{(0,0)})}{h_i}\right)^2 \int_{\xi_{i,N}^{(0,0)}}^{\xi_{i+1,N}^{(0,0)}} (1-x)^\alpha (1+x)^\beta \, dx \\
         &= \sum_{i=0}^{N-1} h_i^{-2} \left( \int_{\xi_{i,N}^{(0,0)}}^{\xi_{i+1,N}^{(0,0)}} \partial_x u_N(x) \, dx \right)^2 \int_{\xi_{i,N}^{(0,0)}}^{\xi_{i+1,N}^{(0,0)}} (1-x)^\alpha (1+x)^\beta \, dx \\
         &\leq \sum_{i=0}^{N-1} h_i^{-1} \int_{\xi_{i,N}^{(0,0)}}^{\xi_{i+1,N}^{(0,0)}} (\partial_x u_N(x))^2 \, dx \int_{\xi_{i,N}^{(0,0)}}^{\xi_{i+1,N}^{(0,0)}} (1-x)^\alpha (1+x)^\beta \, dx,
   \end{align*}
   where the last step follows from the Cauchy--Schwarz inequality.
   On all but the first and the last interval,
   \[
      h_i^{-1} \int_{\xi_{i,N}^{(0,0)}}^{\xi_{i+1,N}^{(0,0)}} (1-x)^\alpha (1+x)^\beta \, dx
         \lesssim \min_{x \in [\xi_{i,N}^{(0,0)}, {\xi_{i+1,N}^{(0,0)}}]} (1-x)^\alpha (1+x)^\beta,
   \]
   and so
   \begin{align*}
      &\sum_{i=1}^{N-2} h_i^{-1} \int_{\xi_{i,N}^{(0,0)}}^{\xi_{i+1,N}^{(0,0)}} (\partial_x u_N(x))^2 \, dx \int_{\xi_{i,N}^{(0,0)}}^{\xi_{i+1,N}^{(0,0)}} (1-x)^\alpha (1+x)^\beta \, dx \\
         & \hspace{5cm} \lesssim \sum_{i=1}^{N-2} \int_{\xi_{i,N}^{(0,0)}}^{\xi_{i+1,N}^{(0,0)}} (\partial_x u_N(x))^2 \, dx \min_{x \in [\xi_{i,N}^{(0,0)}, {\xi_{i+1,N}^{(0,0)}}]} (1-x)^\alpha (1+x)^\beta \\
         & \hspace{5cm} \leq \sum_{i=1}^{N-2}  \int_{\xi_{i,N}^{(0,0)}}^{\xi_{i+1,N}^{(0,0)}} (\partial_x u_N(x))^2 (1-x)^\alpha (1+x)^\beta \, dx
         \leq \| \partial_x u_N \|_{(\alpha,\beta)}^2.
   \end{align*}
   It remains to bound the terms for $i=0$ and $i=N-1$.
   Considering the first interval, if $\beta = 0$, the bound is immediate, so we can take $\beta = 1$.
   Note that
   \[
      h_0^{-1} \int_{\xi_{0,N}^{(0,0)}}^{\xi_{1,N}^{(0,0)}} (1-x)^\alpha (1+x)^\beta \, dx
         \lesssim N^{-2\beta},
   \]
   and so by \eqref{eq:gauss-quad-est},
   \begin{align*}
      h_0^{-1} \int_{\xi_{0,N}^{(0,0)}}^{\xi_{1,N}^{(0,0)}} (\partial_x u_N(x))^2 \, dx \int_{\xi_{0,N}^{(0,0)}}^{\xi_{1,N}^{(0,0)}} (1-x)^\alpha (1+x)^\beta \, dx
         &\lesssim N^{-2\beta} \int_{\xi_{0,N}^{(0,0)}}^{\xi_{1,N}^{(0,0)}} (\partial_x u_N(x))^2 \, dx \\
         &\leq N^{-2\beta} \| \partial_x u_N \|_{(0,0)}^2
         \lesssim \| \partial_x u_N \|_{(\alpha,\beta)}^2.
   \end{align*}
   The same argument with $\alpha$ and $\beta$ interchanged proves the bound for the last interval, and \eqref{eq:pw-lin-interp-stability} follows.
\end{proof}

\subsection{Weighted \texorpdfstring{$L^2$}{L2} estimates and mass preconditioning}
\label{sec:mass}

In this section, we consider the $L^2$ norm with Jacobi weight $(0,1)$ or $(1,0)$.
By symmetry, we henceforth consider only the $(0,1)$ case.
We are interested in diagonal preconditioners for the weighted mass matrix in the Gauss--Lobatto Lagrange basis.
The degree-$N$ mass matrix $M$ is given by
\begin{equation}
   \label{eq:mass-defn}
   M_{ij} = \int_{-1}^1 L_i(x) L_j(x) (1+x) \, dx,
\end{equation}
where $L_i$ and $L_j$ are the Lagrange polynomials defined by the nodes $\xi_{k,N}^{(0,0)}$.
The analysis is complicated by the use of $(0,0)$-weighted nodal points for the nodal basis functions, while the inner product in \eqref{eq:mass-defn} is weighted by the $(0,1)$ Jacobi weight.
We construct a diagonal preconditioner $D$ for $M$ by
\begin{equation}
   \label{eq:mass-diag-defn}
   D_{ii} = \widetilde{\rho}_i := \begin{cases}
      \rho_{0,N}^{(0,1)}, &\qquad i = 0, \\
      (1 + \xi_{i,N}^{(0,0)}) \rho_{i,N}^{(0,0)} & \qquad i \geq 1.
   \end{cases}
\end{equation}
For $u, v \in \PP^N$, let $\langle \cdot \, , \cdot \rangle_{\tilde{N}}$ denote the discrete inner product, and $\| \cdot \|_{\tilde{N}}$ the induced norm, associated with the diagonal matrix $D$,
\begin{align}
   \label{eq:discrete-inner-prod}
   \langle u, v \rangle_{\tilde{N}} &= \sum_{i=0}^N u_i(\xi_{i,N}^{(0,0)}) v_i(\xi_{i,N}^{(0,0)}) \tilde{\rho}_i, \\
   \label{eq:discrete-norm}
   \| u \|_{\tilde{N}}^2 &= \langle u, u \rangle_{\tilde{N}}.
\end{align}

We now turn to a lemma which is a useful tool to compute the approximation error of the discrete quadrature.
We first recall that the leading coefficient of the Jacobi polynomial is given by
\begin{equation}
   \label{eq:jacobi-leading}
   k_{n,n}^{(\alpha,\beta)} = \frac{\Gamma(2n + \alpha + \beta + 1)}{2^n n! \Gamma(n + \alpha + \beta + 1)}.
\end{equation}
Using the expression
\[
   P_n^{(\alpha,\beta)}(x)
      = \frac{\Gamma(n+\alpha+1)}{n! \Gamma(n + \alpha + \beta + 1)} \sum_{k=0}^n \binom{n}{k} \frac{\Gamma(n + k + \alpha + \beta + 1)}{2^k \Gamma(k + \alpha + 1)} (x-1)^k,
\]
the next coefficient is found to be
\begin{equation}
   \label{eq:jacobi-second}
   k_{n-1,n}^{(\alpha,\beta)}
      = k_{n,n}^{(\alpha,\beta)} \frac{n(\alpha - \beta)}{2n + \alpha + \beta}
      = \frac{(\alpha - \beta)\Gamma(2n + \alpha + \beta + 1)}{2^n (2n + \alpha + \beta) (n-1)! \Gamma(n + \alpha + \beta + 1)}.
\end{equation}

\begin{lemma}
   \label{lem:quadrature-error}
   Let $u \in \PP^{2N+1}$, $u(x) = \sum_{i=0}^{2N+1} u_i x^i$.
   Then,
   \begin{equation}
      \label{eq:quadrature-error}
      \sum_{i=0}^N u(\xi_{i,N}^{(0,0)}) \rho_{i,N}^{(0,0)}
         = \int_{-1}^1 u(x)\,dx + u_{2N} \frac{2^{2N+1} N ((N-1)!)^2((N+1)!)^2}{(N+1)(2N+1)((2N)!)^2}.
   \end{equation}
\end{lemma}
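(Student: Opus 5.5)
The plan is to reduce the error to a single polynomial, the nodal polynomial of the Gauss--Lobatto rule. Let $\omega(x) = (1-x^2)\,\partial_x P_N^{(0,0)}(x)$, a polynomial of degree $N+1$ vanishing at all abscissas $\xi_{i,N}^{(0,0)}$. Its leading coefficient is $-N k_{N,N}^{(0,0)}$, using \eqref{eq:jacobi-leading}. Divide $u$ by $\omega$ to get $u = q\,\omega + r$ with $\deg r \le N$ and $\deg q \le N$. Since $r\in\PP^{2N-1}$, the quadrature integrates $r$ exactly by \eqref{eq:lobatto-exact}, and the quadrature sum of $q\omega$ is zero. Hence
\[
   \sum_{i=0}^N u(\xi_{i,N}^{(0,0)}) \rho_{i,N}^{(0,0)} - \int_{-1}^1 u\,dx = -\int_{-1}^1 q(x)\,\omega(x)\,dx .
\]

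Next I identify which part of $q$ survives. Write $\omega = (1-x^2)\partial_x P_N^{(0,0)}$. Up to a constant, $\partial_x P_N^{(0,0)}$ equals $P_{N-1}^{(1,1)}$, by \eqref{eq:gen-jacobi-deriv} with constant $C=\tfrac12(N+1)$. So $\int q\,\omega\,dx = \tfrac{N+1}{2}(q, P_{N-1}^{(1,1)})_{(1,1)}$, which vanishes for $\deg q \le N-2$. Thus only the two top coefficients $q_N, q_{N-1}$ of $q$ matter.

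Two facts keep this simple. First, $\omega$ has definite parity, so $x^N\omega$ integrates against $P_{N-1}^{(1,1)}$ like $x^{N-1}$ times a parity shift. Second, $x^{N-1}$ projects onto $P_{N-1}^{(1,1)}$ with coefficient $1/k_{N-1,N-1}^{(1,1)}$. The contribution of $q_N x^N$ comes from its degree-$(N-1)$ component in the $(1,1)$-orthogonal expansion. For the symmetric weight this component vanishes by parity, since $x^N$ and $P_{N-1}^{(1,1)}$ have opposite parity, so $\int x^N \omega\,dx = 0$. Therefore only $q_{N-1}$ contributes:
\[
   \int q\,\omega\,dx = q_{N-1}\,\tfrac{N+1}{2}\,\|P_{N-1}^{(1,1)}\|_{(1,1)}^2 / k_{N-1,N-1}^{(1,1)} .
\]

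Now I express $q_{N-1}$ through the coefficients of $u$. Write $\omega = c\,(x^{N+1} + e\,x^{N-1} + \dots)$; by parity there is no $x^N$ term, and $c = -N k_{N,N}^{(0,0)}$. Matching the top coefficients of $u = q\omega + r$ gives
\[
   q_N = \frac{u_{2N+1}}{c}, \qquad q_{N-1} = \frac{u_{2N}}{c}.
\]
The second identity uses that the $x^{2N}$ coefficient of $q\omega$ is $c(q_{N-1} + 0\cdot q_N)$. So the error depends only on $u_{2N}$, as the statement asserts, and the constant is
\[
   -\frac{N+1}{2}\,\frac{\|P_{N-1}^{(1,1)}\|_{(1,1)}^2}{c\,k_{N-1,N-1}^{(1,1)}} .
\]

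The remaining step, and the main obstacle, is bookkeeping: evaluating this constant with \eqref{eq:jacobi-norm} and \eqref{eq:jacobi-leading}, keeping the signs straight (the sign of $c$ from $(1-x^2)$ is negative), and simplifying the Gamma functions to
\[
   \frac{2^{2N+1} N ((N-1)!)^2((N+1)!)^2}{(N+1)(2N+1)((2N)!)^2}.
\]
As a check I will verify it at $N=1$: the trapezoid rule applied to $x^2$ gives error $2 - 2/3 = 4/3$, and the formula gives $2^3\cdot1\cdot1\cdot4/(2\cdot3\cdot4) = 4/3$.
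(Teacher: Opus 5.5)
Your argument is correct and is essentially the paper's. The paper adds $a(1-x^2)(P_{N-1}^{(1,1)})^2 + b(1-x^2)P_{N-1}^{(1,1)}P_N^{(1,1)}$, i.e.\ your nodal polynomial $\omega$ times an element of $\operatorname{span}\{P_{N-1}^{(1,1)},P_N^{(1,1)}\}$, to lower the degree to $2N-1$ without changing nodal values; you instead take the full Euclidean quotient $q$ and discard all but its top two coefficients by orthogonality and parity, which is the same computation. The bookkeeping you defer does close: with $-1/c = 2^N(N!)^2/(N(2N)!)$, $\|P_{N-1}^{(1,1)}\|_{(1,1)}^2 = 8N/((2N+1)(N+1))$ and $1/k_{N-1,N-1}^{(1,1)} = 2^{N-1}(N-1)!(N+1)!/(2N)!$, your constant reduces exactly to the stated one.
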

\begin{proof}
   Define the polynomial $\Psi$ by
   \[
      \Psi(x) = u(x) + a (1-x^2) P_{N-1}^{(1,1)}(x)^2 + b (1-x^2) P_{N-1}^{(1,1)}(x) P_{N}^{(1,1)}.
   \]
   The coefficients $a$ and $b$ are chosen such that $\deg(\Psi) \leq 2N - 1$.
   To this end, set
   \begin{align*}
      a &= \frac{u_{2N}}{(k_{N-1,N-1}^{(1,1)})^2}
         = u_{2N} \frac{2^{2N-2}((N-1)!)^2((N+1)!)^2}{(2N)!}, \\
      b &= \frac{u_{2N + 1}}{(k_{N-1,N-1}^{(1,1)})k_{N,N}^{(1,1)}}
         = u_{2N + 1} \frac{2^{2N-1} (N-1)! N! (N+1)! (N+2)! }{ (2N+2)! (2N)! }.
   \end{align*}
   Note that the degree $2N$ coefficient of $(1 - x^2) P_{N-1}^{(1,1)}(x) P_N^{(1,1)}(x)$ is $k_{N-1,N-1}^{(1,1)} k_{N-1,N}^{(1,1)} + k_{N-2,N-1}^{(1,1)} k_{N,N}^{(1,1)} = 0$, since, by \eqref{eq:jacobi-second}, $k_{n-1,n}^{(1,1)} = 0$ for all $n$.
   Therefore, $a$ and $b$ cancel the degree $2N$ and $2N + 1$ terms in $u(x)$.
   Furthermore, $\Psi(\xi_{i,N}^{(0,0)}) = u(\xi_{i,N}^{(0,0)})$ for all $i$, since $\partial_x P_N^{(0,0)} = \frac{1}{2}(N+1)P_{N-1}^{(1,1)}$.
   By exactness of the Gauss--Lobatto quadrature,
   \begin{align*}
      \sum_{i=0}^N u(\xi_{i,N}^{(0,0)}) \rho_{i,N}^{(0,0)}
         &= \sum_{i=0}^N \Psi(\xi_{i,N}^{(0,0)}) \rho_{i,N}^{(0,0)}
         = \int_{-1}^1 \Psi(x) \, dx \\
         &= \int_{-1}^1 u(x) \, dx + a (P_{N-1}^{(1,1)}, P_{N-1}^{(1,1)})_{(1,1)} + b(P_{N-1}^{(1,1)}, P_{N}^{(1,1)})_{(1,1)} \\
         &= \int_{-1}^1 u(x) \, dx + a \frac{8N}{(2N+1)(N+1)},
   \end{align*}
   by \eqref{eq:jacobi-norm}; \eqref{eq:quadrature-error} follows.
\end{proof}

\begin{remark}
   The same aliasing proof technique of \Cref{lem:quadrature-error} can be used to compute the quadrature error of higher-degree integrands.
   However, for the purposes of mass matrix preconditioning, it is sufficient to consider polynomials of degree at most $2N + 1$.
\end{remark}

We introduce a basis for $\PP^{N}$ that will be useful in the following analysis.
Define $\{ \Phi_i \}_{i=0}^N$ by
\begin{equation}
   \label{eq:phi-basis-defn}
   \Phi_i(x) = \begin{cases}
      (1+x) P_i^{(0,3)}(x), &\qquad i \leq N-1, \\
      P_N^{(0,2)}(x), &\qquad i = N.
   \end{cases}
\end{equation}
This basis is orthogonal in the weighted inner product $(\cdot\,,\cdot)_{(0,1)}$.
Since $\Phi_i(-1) = 0$ for $i \leq N - 1$, by exactness of the Gauss--Lobatto quadrature \eqref{eq:lobatto-exact}, for $i + j \leq 2N - 4$, $i \neq j$, $\langle \Phi_i, \Phi_j \rangle_{\tilde{N}} = 0$.
By the same reasoning, $\Phi_N$ is discretely orthogonal to $\Phi_i$ for $i \leq N - 3$.
The discrete norm and inner product properties of this basis are summarized in the following result.

\begin{lemma}
   \label{lem:discrete-quad-properties}
   Defining $\Phi_i$ by \eqref{eq:phi-basis-defn}, the following identities and estimates hold.

   \noindent
   \begin{minipage}{0.5\textwidth}
      \begin{alignat}{3}
         \label{eq:phi-orth}
         &(\Phi_i, \Phi_j)_{(0,1)} = 0, \quad i \neq j, \\
         \label{eq:phi-i-norm}
         &\| \Phi_i \|_{(0,1)}^2 = \tfrac{8}{i + 2}, \quad i \leq N - 1, \\
         \label{eq:phi-n-norm}
         &\| \Phi_N \|_{(0,1)}^2 = 2, \\
         \label{eq:phi-i-disc-orth}
         &\langle \Phi_i, \Phi_j \rangle_{\tilde{N}} = 0, \quad i + j \leq 2N - 4, \quad i \neq j, \\
         \label{eq:phi-n-disc-orth}
         &\langle \Phi_i, \Phi_N \rangle_{\tilde{N}} = 0, \quad i \leq N - 3, \\
         \label{eq:phi-nm2-nm1}
         &\langle \Phi_{N-2}, \Phi_{N-1} \rangle_{\tilde{N}} = \tfrac{8 (N-1)}{(N+1)(N+2)},
      \end{alignat}
   \end{minipage}%
   \begin{minipage}{0.5\textwidth}
      \begin{alignat}{3}
         \label{eq:phi-nm2-n}
         &\langle \Phi_{N-2}, \Phi_{N} \rangle_{\tilde{N}} = \tfrac{8(N-1)}{N(N+2)}, \\
         \label{eq:phi-nm1-n}
         &\langle \Phi_{N-1}, \Phi_{N} \rangle_{\tilde{N}} = \tfrac{-8(N^2 - 5N - 5)}{(N+1)(N+2)^2}, \\
         \label{eq:phi-i-disc-norm}
         &\| \Phi_i \|_{\tilde{N}}^2 = \| \Phi_i \|_{(0,1)}^2, \qquad i \leq N - 2, \\
         \label{eq:phi-nm1-disc-norm}
         &\| \Phi_{N-1} \|_{\tilde{N}}^2 = \| \Phi_{N-1} \|_{(0,1)}^2 + \tfrac{72 N}{(N+1)(N+2)^2}, \\
         \label{eq:phi-n-disc-norm}
         &\| \Phi_N \|_{\tilde{N}}^2 = \| \Phi_N \|_{(0,1)}^2 + \tfrac{2(N^3 - 2N^2 + 16 N + 12)}{N(N+2)^2}.
      \end{alignat}
   \end{minipage}
\end{lemma}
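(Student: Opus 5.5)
The plan is to derive all eleven identities from three ingredients. The first is the explicit Jacobi norm formula \eqref{eq:jacobi-norm}. The second is exactness of Gauss--Lobatto quadrature \eqref{eq:lobatto-exact} up to degree $2N-1$. The third is the aliasing formula of \Cref{lem:quadrature-error} for integrands of degree $2N$ and $2N+1$.

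A preliminary reduction handles the discrete product. For $i,j\le N-1$, both $\Phi_i$ and $\Phi_j$ vanish at $-1$, so the $i=0$ entry of $D$ is irrelevant. For $k\ge 1$ we have $\widetilde\rho_k=(1+\xi_k)\rho_k^{(0,0)}$, so $\langle\Phi_i,\Phi_j\rangle_{\tilde N}$ is the Gauss--Lobatto sum of $u=(1+x)\Phi_i\Phi_j$. When $\Phi_N$ is involved, the node $-1$ carries weight $\rho^{(0,1)}_{0,N}$ instead of $0$. I will therefore write
\[
\langle\Phi_i,\Phi_N\rangle_{\tilde N}=\sum_k (1+\xi_k)\Phi_i\Phi_N(\xi_k)\rho_k^{(0,0)}+\Phi_i(-1)\Phi_N(-1)\rho_{0,N}^{(0,1)}.
\]
The last term vanishes unless $i=N$.

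\textbf{Continuous identities \eqref{eq:phi-orth}--\eqref{eq:phi-n-norm}.} For $i,j\le N-1$, I compute $(\Phi_i,\Phi_j)_{(0,1)}=(P_i^{(0,3)},P_j^{(0,3)})_{(0,3)}$. This gives orthogonality and, via \eqref{eq:jacobi-norm}, the norm $2^4/(2i+4)=8/(i+2)$. For orthogonality of $\Phi_N$ against $\Phi_i$ with $i\le N-1$, I write $(P_N^{(0,2)},P_i^{(0,3)})_{(0,2)}$, which vanishes since $P_i^{(0,3)}\in\PP^{N-1}$. Formula \eqref{eq:jacobi-norm} then gives $\|P_N^{(0,2)}\|_{(0,1)}^2$.

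One caveat: the weight in $\|\Phi_N\|_{(0,1)}$ is $(1+x)$, not $(1+x)^2$. Hence I would instead compute $\int (P_N^{(0,2)})^2(1+x)\,dx$ directly, expanding $P_N^{(0,2)}$ in the $(0,1)$ basis or using known integrals. I expect this to reduce to the value $2$.

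\textbf{Discrete orthogonality and exact norms \eqref{eq:phi-i-disc-orth}, \eqref{eq:phi-n-disc-orth}, \eqref{eq:phi-i-disc-norm}.} For these I use degree counting. If $i,j\le N-1$, then $(1+x)\Phi_i\Phi_j$ has degree $i+j+3$. This is at most $2N-1$ when $i+j\le 2N-4$, so the sum equals the continuous inner product. For $i\le N-3$, the integrand $(1+x)\Phi_i\Phi_N$ has degree $\le 2N-1$. The endpoint term vanishes, so exactness gives $0$.

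\textbf{Aliased entries \eqref{eq:phi-nm2-nm1}--\eqref{eq:phi-n-disc-norm}.} These are the main computation and the main obstacle. The integrands have degree exactly $2N$ or $2N+1$. I apply \Cref{lem:quadrature-error}, whose error term depends only on the $x^{2N}$ coefficient $u_{2N}$. So for each pair I need the top two coefficients of $\Phi_i$, which I get from \eqref{eq:jacobi-leading} and \eqref{eq:jacobi-second}.
\begin{itemize}[label={}]
\item For $(N-2,N-1)$, the integrand $(1+x)^3P_{N-2}^{(0,3)}P_{N-1}^{(0,3)}$ has degree $2N$. Its leading coefficient is $k^{(0,3)}_{N-2,N-2}k^{(0,3)}_{N-1,N-1}$, and the continuous part is $0$.
\item For $(N-1,N-1)$, the integrand has degree $2N+1$. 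I need the $x^{2N}$ coefficient of $(1+x)^3(P_{N-1}^{(0,3)})^2$, which combines the leading and subleading coefficients.
\item For $(N-2,N)$ and $(N-1,N)$, the integrands are $(1+x)^2P_{i}^{(0,3)}P_N^{(0,2)}$ of degree $2N$ and $2N+1$. I treat them similarly, with continuous part $0$.
\item For $(N,N)$, the integrand $(1+x)(P_N^{(0,2)})^2$ has degree $2N+1$. Its coefficient $u_{2N}$ involves $k^{(0,2)}_{N-1,N}$. I must also add the endpoint correction $\rho^{(0,1)}_{0,N}(P_N^{(0,2)}(-1))^2$ with $P_N^{(0,2)}(-1)=(-1)^N\binom{N+2}{N}$, and subtract the $k=0$ term of the Gauss--Lobatto sum, which is zero anyway.
\end{itemize}
This requires a closed form for the Jacobi--Gauss--Lobatto endpoint weight $\rho^{(0,1)}_{0,N}$, which I would take from the standard formula in \cite{Shen2011}.

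The remaining work is simplifying Gamma-function ratios, which is routine but error-prone. I would sanity-check each resulting rational expression at $N=2,3$ by direct numerical quadrature.
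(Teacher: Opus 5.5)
Your proposal is correct and is the paper's argument. Exact Gauss--Lobatto quadrature plus degree counting gives \eqref{eq:phi-i-disc-orth}, \eqref{eq:phi-n-disc-orth} and \eqref{eq:phi-i-disc-norm}. \Cref{lem:quadrature-error}, applied to the degree-$2N$ and degree-$(2N+1)$ integrands, gives the aliased entries, with the extra endpoint term $\rho^{(0,1)}_{0,N}\bigl(P_N^{(0,2)}(-1)\bigr)^2 = 2(N+2)/N$ added for $\|\Phi_N\|_{\tilde N}^2$; this is the ``repeated application'' the paper leaves to the reader, and your coefficient bookkeeping with \eqref{eq:jacobi-leading}--\eqref{eq:jacobi-second} reproduces every stated constant. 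The one value you only asserted, $\|\Phi_N\|_{(0,1)}^2=2$, follows from the classical identity $\int_{-1}^1(1+x)^{\beta-1}\bigl(P_n^{(0,\beta)}\bigr)^2\,dx = 2^\beta/\beta$ with $\beta=2$.
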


These can be shown through repeated application of \Cref{lem:quadrature-error}; details of the proof are omitted for brevity.

\begin{lemma}
   \label{lem:disc-norm-equiv}
   The discrete norm $\| \cdot \|_{\tilde{N}}$ defined by \eqref{eq:discrete-norm} and the weighted $L^2$ norm $\| \cdot \|_{(0,1)}$ satisfy, for all $u \in \PP^N$,
   \begin{equation}
      \label{eq:disc-norm-equiv}
      \| u \|_{\tilde{N}}^2 \lesssim \| u \|_{(0,1)}^2 \lesssim N \| u \|_{\tilde{N}}^2.
   \end{equation}
\end{lemma}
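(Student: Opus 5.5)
Expand $u \in \PP^N$ in the basis $\{\Phi_i\}$ of \eqref{eq:phi-basis-defn}, $u = \sum_{i=0}^N a_i \Phi_i$. By \eqref{eq:phi-orth}--\eqref{eq:phi-n-norm},
\[
   \| u \|_{(0,1)}^2 = \sum_{i=0}^N a_i^2 \| \Phi_i \|_{(0,1)}^2 =: \sum_{i=0}^N a_i^2 m_i .
\]
Lemma~\ref{lem:discrete-quad-properties} shows that the discrete Gram matrix $G_{ij} = \langle \Phi_i, \Phi_j \rangle_{\tilde N}$ is diagonal with $G_{ii} = m_i$, except for the trailing $3\times 3$ block on indices $\{N-2, N-1, N\}$. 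In that block every entry is given explicitly by \eqref{eq:phi-nm2-nm1}--\eqref{eq:phi-n-disc-norm}. Hence $\|u\|_{\tilde N}^2 = \sum_{i \le N-3} a_i^2 m_i + \bm a^T B \bm a$, where $\bm a = (a_{N-2}, a_{N-1}, a_N)$ and $B$ is the $3\times 3$ block. Both inequalities in \eqref{eq:disc-norm-equiv} therefore reduce to comparing the quadratic form $\bm a^T B \bm a$ with $\bm a^T M \bm a$, where $M = \operatorname{diag}(m_{N-2}, m_{N-1}, m_N) = \operatorname{diag}(8/N, 8/(N+1), 2)$. The task is to bound the extreme generalized eigenvalues of the pair $(B, M)$.

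\textbf{Upper bound} $\|u\|_{\tilde N}^2 \lesssim \|u\|_{(0,1)}^2$. Rescale to $\widehat B = M^{-1/2} B M^{-1/2}$, whose entries can be read off:
\begin{itemize}
   \item diagonal: $1$, $1 + O(N^{-1})$, and $1 + (N^3+\cdots)/(N(N+2)^2) = 2 + O(N^{-1})$;
   \item $(N-2, N-1)$ entry: $O(N^{-1})/\sqrt{(8/N)(8/(N+1))} = O(1)$;
   \item $(N-2, N)$ entry: $O(N^{-1})/\sqrt{16/N} = O(N^{-1/2})$;
   \item $(N-1, N)$ entry: $O(N^{-1})/\sqrt{16/(N+1)} = O(N^{-1/2})$.
\end{itemize}
All entries are uniformly bounded, so $\lambda_{\max}(\widehat B) \lesssim 1$, which is the first inequality.

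\textbf{Lower bound} $\|u\|_{(0,1)}^2 \lesssim N \|u\|_{\tilde N}^2$. This requires $\lambda_{\min}(\widehat B) \gtrsim N^{-1}$, and I expect it to be the main obstacle. The $(N-2, N-1)$ entry of $\widehat B$ tends to $1$, so the upper-left $2\times 2$ block tends to $\begin{pmatrix}1&1\\1&1\end{pmatrix}$, which is singular. Near-degeneracy is therefore genuine, and the claimed factor $N$ is presumably sharp.

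To handle this, compute $\det \widehat B$ asymptotically from the exact rational entries. For the $2\times 2$ block, $1\cdot(1 + 9/N + \dots) - (1 - c/N)^2 \approx C/N$ with $C>0$; this constant must be extracted carefully from \eqref{eq:phi-nm2-nm1} and \eqref{eq:phi-nm1-disc-norm}. The coupling to the third row is $O(N^{-1/2})$ against a diagonal entry near $2$. Taking the Schur complement with respect to the $N$-th index therefore perturbs the $2\times 2$ block by $O(N^{-1})$. One has to check that this perturbation does not cancel the $C/N$ determinant, which amounts to computing the $1/N$ coefficient exactly, possibly using a symbolic computation.

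Given that $\det \widehat B \approx N^{-1}$ and that all entries are bounded (so the other two eigenvalues are $\lesssim 1$), it follows that $\lambda_{\min} \gtrsim N^{-1}$, completing the lower bound. Positive definiteness for small $N$ is automatic: $\|\cdot\|_{\tilde N}$ is a norm on $\PP^N$, because all $\tilde\rho_i > 0$ and a polynomial of degree $N$ vanishing at $N+1$ nodes is zero. Finitely many $N$ can thus be absorbed into the constants.
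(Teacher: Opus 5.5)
Your reduction matches the paper's. You expand in the $\Phi_i$ basis, observe from Lemma~\ref{lem:discrete-quad-properties} that the discrete and continuous Gram matrices differ only on the trailing $3\times 3$ block indexed by $(N-2,N-1,N)$, and compare that block with $\operatorname{diag}(8/N,\,8/(N+1),\,2)$. Your upper bound, via uniformly bounded entries of $\widehat B$, is the matrix form of the paper's Young's-inequality bound on the cross terms.

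For the lower bound the paper absorbs the cross terms one at a time with Young's inequality, while you certify $\lambda_{\min}(\widehat B)\gtrsim N^{-1}$ through $\det\widehat B$. That route is sound, but as written you assume the decisive estimate $\det\widehat B\approx N^{-1}$ rather than checking it. The check is short. Write
\[
   \widehat B = \begin{pmatrix} 1 & d & e \\ d & b & f \\ e & f & c \end{pmatrix}, \qquad
   b = 1 + \tfrac{9N}{(N+2)^2},\quad
   d^2 = \tfrac{N(N-1)^2}{(N+1)(N+2)^2},\quad
   e^2 = \tfrac{4(N-1)^2}{N(N+2)^2},\quad
   f^2 = \tfrac{4(N^2-5N-5)^2}{(N+1)(N+2)^4},
\]
with $c = 2 + O(N^{-1})$ and $d, e > 0 > f$ for $N \geq 6$. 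Then $b - d^2 = 16/N + O(N^{-2})$, $e^2 = 4/N + O(N^{-2})$, $f^2 = 4/N + O(N^{-2})$ and $2def = -8/N + O(N^{-2})$, so
\[
   \det \widehat B = c(b - d^2) - f^2 - b e^2 + 2def
   = \tfrac{32}{N} - \tfrac{4}{N} - \tfrac{4}{N} - \tfrac{8}{N} + O(N^{-2})
   = \tfrac{16}{N} + O(N^{-2}).
\]
Eliminating $\Phi_N$ halves the $2\times 2$ determinant but does not cancel it: the Schur complement has determinant $8/N + O(N^{-2})$. Combined with positive definiteness and $\lambda_{\max}(\widehat B)\lesssim 1$, this gives $\lambda_{\min}(\widehat B)\gtrsim N^{-1}$ for large $N$, and your finite-$N$ remark covers the rest. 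The other two eigenvalues tend to $2$, so in fact $\lambda_{\min}(\widehat B) = 4/N + o(N^{-1})$, and the factor $N$ is sharp as you suspected.

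What your route buys is visible in the paper's argument. After the near-degenerate $(N-2,N-1)$ coupling is absorbed, only an $O(N^{-1})$ margin remains on those two coefficients. The squared rescaled $\Phi_N$ couplings, $e^2$ and $f^2\approx 4/N$, are of that same order, so absorbing them depends on explicit constants, not orders of magnitude.

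The paper's weighted Young step bounds $|a_{N-1}a_N\langle\Phi_{N-1},\Phi_N\rangle_{\tilde N}|$ by $\frac{\epsilon_1}{N}a_{N-1}^2\|\Phi_{N-1}\|_{\tilde N}^2 + \frac{1}{\epsilon_1 N}a_N^2\|\Phi_N\|_{\tilde N}^2$. This cannot hold with an $N$-independent constant: for $|a_{N-1}| = \sqrt N$ and $a_N = 1$ the left side is about $8N^{-1/2}$, while the right side is $O(N^{-1})$. With the correct weight $1/\epsilon_1$ the absorption does close, but only because the constants cooperate, which is exactly what your determinant computation verifies. The paper's term-by-term absorption avoids expanding a determinant, but it needs those constants tracked explicitly.
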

\begin{proof}
   Expanding $u \in \PP^N$ in this basis, $u = \sum_{i=0}^N \widehat{u}_i \Phi_i$,
   \begin{equation}
      \label{eq:norm-u-series}
      \| u \|_{(0,1)}^2 = \sum_{i=0}^N \widehat{u}_i^2 \| \Phi_i \|_{(0,1)}^2.
   \end{equation}
   By the discrete orthogonality identities \eqref{eq:phi-i-disc-orth} and \eqref{eq:phi-n-disc-orth},
   \[
      \| u \|_{\tilde{N}}^2 = \sum_{i=0}^N \widehat{u}_i^2 \| \Phi_i \|_{\tilde{N}}^2 + 2 \widehat{u}_{N-2}\widehat{u}_{N-1} \langle \Phi_{N-2}, \Phi_{N-1} \rangle_{\tilde{N}} + 2 \widehat{u}_{N-2}\widehat{u}_{N} \langle \Phi_{N-2}, \Phi_{N} \rangle_{\tilde{N}} + 2 \widehat{u}_{N-1}\widehat{u}_{N} \langle \Phi_{N-1}, \Phi_{N} \rangle_{\tilde{N}}.
   \]
   Applying Young's inequality to the cross terms, and comparing \eqref{eq:phi-nm2-nm1}, \eqref{eq:phi-nm2-n}, and \eqref{eq:phi-nm1-n} with \eqref{eq:phi-i-disc-norm}, \eqref{eq:phi-nm1-disc-norm}, and \eqref{eq:phi-n-disc-norm}, we see
   \[
      \| u \|_{\tilde{N}}^2 \lesssim \sum_{i=0}^N \widehat{u}_i^2 \| \Phi_i \|_{\tilde{N}}^2.
   \]
   The lower bound in \eqref{eq:disc-norm-equiv} is then obtained by comparing the discrete norms with \eqref{eq:phi-i-norm} and \eqref{eq:phi-n-norm} and using \eqref{eq:norm-u-series}.

   To obtain the upper bound, we absorb the cross terms into $\| \Phi_{N-2} \|_{\tilde{N}}^2$, $\| \Phi_{N-1} \|_{\tilde{N}}^2$, and $\| \Phi_{N} \|_{\tilde{N}}^2$.
   From \eqref{eq:phi-nm2-nm1}, \eqref{eq:phi-i-disc-norm}, and \eqref{eq:phi-nm1-disc-norm},
   \begin{align*}
      \| \Phi_{N-2} \|_{\tilde{N}}^2 - \langle \Phi_{N-2}, \Phi_{N-1} \rangle_{\tilde{N}}
         &= \frac{3}{N + 2} \| \Phi_{N-2} \|_{\tilde{N}}^2, \\
      \| \Phi_{N-1} \|_{\tilde{N}}^2 - \langle \Phi_{N-2}, \Phi_{N-1} \rangle_{\tilde{N}}
         &= \frac{11 N^2 + 5N + 2}{N(N^2 + 13N + 4)} \| \Phi_{N-1} \|_{\tilde{N}}^2,
   \end{align*}
   so by Young's inequality
   \begin{equation*}
      \mathtoolsset{multlined-width=0.75\displaywidth}
      \begin{multlined}
         \| u \|_{\tilde{N}}^2 \gtrsim \sum_{i=0}^{N-3} \widehat{u}_i^2 \| \Phi_i \|_{\tilde{N}}^2
            + \frac{1}{N} (\widehat{u}_{N-2}^2 \| \Phi_{N-2} \|_{\tilde{N}}^2
            + \widehat{u}_{N-1}^2 \| \Phi_{N-1} \|_{\tilde{N}}^2)
            + \widehat{u}_N^2 \| \Phi_N \|_{\tilde{N}}^2 \\
            - | 2 \langle \widehat{u}_{N-2}\widehat{u}_{N} \langle \Phi_{N-2}, \Phi_{N} \rangle_{\tilde{N}} | -  | 2 \widehat{u}_{N-1}\widehat{u}_{N} \langle \Phi_{N-1}, \Phi_{N} \rangle_{\tilde{N}} |.
      \end{multlined}
   \end{equation*}
   By the weighted Young's inequality, \eqref{eq:phi-nm1-n}, and \eqref{eq:phi-nm2-n},
   \begin{align*}
      \langle \widehat{u}_{N-1}\widehat{u}_{N} \langle \Phi_{N-1}, \Phi_{N} \rangle_{\tilde{N}}
         &\lesssim \frac{\epsilon_1}{N} \widehat{u}_{N-1}^2 \| \Phi_{N-1} \|_{\tilde{N}}^2
         + \frac{1}{\epsilon_1 N} \widehat{u}_{N}^2 \| \Phi_{N} \|_{\tilde{N}}^2, \\
      \langle \widehat{u}_{N-2}\widehat{u}_{N} \langle \Phi_{N-2}, \Phi_{N} \rangle_{\tilde{N}}
         &\lesssim \frac{\epsilon_2}{N} \widehat{u}_{N-2}^2 \| \Phi_{N-2} \|_{\tilde{N}}^2
         + \frac{1}{\epsilon_2 N} \widehat{u}_{N}^2 \| \Phi_{N} \|_{\tilde{N}}^2.
   \end{align*}
   Since, by \eqref{eq:phi-n-disc-norm}, $\| \Phi_{N} \|_{\tilde{N}}^2 \approx 1$, the weights $\epsilon_1$ and $\epsilon_2$ can be chosen such that
   \begin{align*}
      \| u \|_{\tilde{N}}^2
         &\gtrsim \sum_{i=0}^{N-3} \widehat{u}_i^2 \| \Phi_i \|_{\tilde{N}}^2
            + \frac{1}{N} (\widehat{u}_{N-2}^2 \| \Phi_{N-2} \|_{\tilde{N}}^2
            + \widehat{u}_{N-1}^2 \| \Phi_{N-1} \|_{\tilde{N}}^2)
            + \widehat{u}_N^2 \| \Phi_N \|_{\tilde{N}}^2 \\
         &\geq \frac{1}{N} \sum_{i=0}^{N} \widehat{u}_i^2 \| \Phi_i \|_{\tilde{N}}^2
         \approx \frac{1}{N} \sum_{i=0}^{N} \widehat{u}_i^2 \| \Phi_i \|_{(0,1)}^2
         = \frac{1}{N} \| u \|_{(0,1)}^2. \qedhere
   \end{align*}
\end{proof}

The spectrum of the mass matrix $M$ can be estimated using the Rayleigh quotient $\frac{ \| u \|_{(0,1)}^2}{\sum_{i=0}^N u(\xi_{i,N}^{(0,0)})^2}$.
Taking $u(x) = 1$ gives $\lambda_{\max}(M) \gtrsim N^{-1}$, and taking $u(x) = \Phi_N(x)$ gives $\lambda_{\min}(M) \lesssim N^{-4}$;
the unpreconditioned mass matrix has condition number $\kappa(M) \approx N^3$.
However, as a consequence of \Cref{lem:disc-norm-equiv}, the diagonally preconditioned mass matrix has condition number $\kappa(D^{-1}M) \approx N$.

\begin{corollary}
   \label{cor:mass-gen-eig}
   The generalized eigenvalues of $M$ and $D$ satisfy
   \[
      \bm u^\tr D \bm u \lesssim \bm u^\tr M \bm u \lesssim N \bm u^\tr D \bm u.
   \]
\end{corollary}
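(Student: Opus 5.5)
The plan is to translate the quadratic forms $\bm u^\tr M \bm u$ and $\bm u^\tr D \bm u$ into the continuous and discrete norms already compared in \Cref{lem:disc-norm-equiv}. Given a coefficient vector $\bm u = (u_0,\ldots,u_N)$, let $u = \sum_i u_i L_i \in \PP^N$ be the associated polynomial in the Gauss--Lobatto Lagrange basis, so that $u(\xi_{i,N}^{(0,0)}) = u_i$. By the definition \eqref{eq:mass-defn} of the mass matrix and bilinearity,
\[
   \bm u^\tr M \bm u = \sum_{i,j} u_i u_j \int_{-1}^1 L_i L_j (1+x)\,dx = \| u \|_{(0,1)}^2 .
\]
Since $D$ is diagonal with entries $\widetilde\rho_i$ as in \eqref{eq:mass-diag-defn},
\[
   \bm u^\tr D \bm u = \sum_{i=0}^N u_i^2 \widetilde\rho_i = \sum_{i=0}^N u(\xi_{i,N}^{(0,0)})^2 \widetilde\rho_i = \| u \|_{\tilde N}^2 ,
\]
by \eqref{eq:discrete-inner-prod}--\eqref{eq:discrete-norm}. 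Here the subscripted $u_i(\cdot)$ in \eqref{eq:discrete-inner-prod} is read as point evaluation of $u$.

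The map $\bm u \mapsto u$ is a bijection $\mathbb{R}^{N+1} \to \PP^N$, because Lagrange interpolation at the $N+1$ distinct Gauss--Lobatto nodes is unisolvent. Substituting the two identities into \eqref{eq:disc-norm-equiv} therefore gives
\[
   \bm u^\tr D \bm u \lesssim \bm u^\tr M \bm u \lesssim N\, \bm u^\tr D \bm u
\]
for every $\bm u$. This is exactly the claim, and it is equivalent to the generalized eigenvalues $\lambda$ of $M\bm u = \lambda D \bm u$ lying in $[c, CN]$.

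There is no real obstacle: all the analytic work sits in \Cref{lem:disc-norm-equiv}, which is assumed. The only points to check are that $D$ is positive definite, so the generalized eigenproblem makes sense, and that the weights used in $\|\cdot\|_{\tilde N}$ are the same as those in $D$. Positivity holds because $\rho^{(0,1)}_{0,N} > 0$, and for $i \ge 1$ we have $\rho^{(0,0)}_{i,N} > 0$ and $1 + \xi_{i,N}^{(0,0)} > 0$. The weights agree by construction.
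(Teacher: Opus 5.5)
Your proposal is correct and matches the paper: the corollary is stated as an immediate consequence of \Cref{lem:disc-norm-equiv}, obtained by identifying $\bm u^\tr M \bm u = \|u\|_{(0,1)}^2$ and $\bm u^\tr D \bm u = \|u\|_{\tilde{N}}^2$ for the polynomial $u \in \PP^N$ whose Gauss--Lobatto nodal values are $\bm u$. Your extra check that $D$ is positive definite, so that the generalized eigenproblem is well posed, is a harmless addition that the paper leaves implicit.
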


From the preceding results, we do not expect the high-order Jacobi-weighted mass matrix to be spectrally equivalent to its low-order-refined counterpart, independent of $N$.
However, we can prove weighted $L^2$ equivalence of the high-order and low-order-refined interpolants with certain additional assumptions.

\begin{lemma}
   \label{lem:inv-weighted-l2}
   Let $u_N \in \PP^N$ with $u_N(1) = 0$.
   Let $u_h \in V_{h,N}^{(0,0)}$ be such that $u_h(\xi_{i,N}^{(0,0)}) = u_N(\xi_{i,N}^{(0,0)})$ for all $i$.
   Then,
   \[
      \| u_N \|_{(-1,0)} \approx \| u_h \|_{(-1,0)}.
   \]
\end{lemma}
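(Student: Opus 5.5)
The plan is to show that both sides are equivalent to the same weighted discrete sum of the nodal values $f_i := u_N(\xi_{i,N}^{(0,0)})$, for $i=0,\dots,N-1$; note that $f_N = 0$ by hypothesis.

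\textbf{High-order side.} Since $u_N(1) = 0$, we can write $u_N = (1-x) q$ with $q \in \PP^{N-1}$. Then $u_N(x)^2 (1-x)^{-1} = (1-x) q(x)^2$ is a polynomial of degree $2N-1$. It is therefore integrated exactly by the Gauss--Lobatto rule \eqref{eq:lobatto-exact} with $\alpha=\beta=0$; equivalently, this is the equality case of \Cref{lem:quadrature-bound} with $\sigma = -1$, $\tau = 0$. The node $x = 1$ contributes nothing, so
\[
   \| u_N \|_{(-1,0)}^2 = \sum_{i=0}^{N-1} f_i^2 \, \frac{\rho_{i,N}^{(0,0)}}{1 - \xi_{i,N}^{(0,0)}} .
\]
For the weights I will use Szegő's estimate \eqref{eq:szego-weight}: for interior nodes $1\le i\le N-1$ it gives $\rho_{i,N}^{(0,0)}/(1-\xi_i) \approx N^{-1}(1-\xi_i)^{-1/2}(1+\xi_i)^{1/2}$. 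For the endpoint $i=0$, I will use the explicit Gauss--Lobatto weight $\rho_0 = 2/(N(N+1)) \approx N^{-2}$.

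\textbf{Low-order side.} I will split $\| u_h \|_{(-1,0)}^2$ over the subintervals $I_i = [\xi_i, \xi_{i+1}]$.

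On $I_i$ with $i \le N-2$, the weight $(1-x)^{-1}$ varies by at most a bounded factor. This follows from Sündermann's bound \eqref{eq:sundermann}, which gives $1-\xi_{i+1} \gtrsim 1-\xi_i$ away from the last interval. On such an interval the local $2\times 2$ linear-element mass matrix is spectrally equivalent to its diagonal, so
\[
   \int_{I_i} u_h^2 (1-x)^{-1}\,dx \approx (f_i^2 + f_{i+1}^2)\, \frac{h_i}{1-\xi_i}.
\]
On the last interval, $u_h = f_{N-1} (1-x)/h_{N-1}$, and a direct computation gives $\int_{I_{N-1}} u_h^2 (1-x)^{-1}\,dx = f_{N-1}^2/2$.

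Summing over intervals, and using that neighbouring intervals have comparable values of $h_i/(1-\xi_i)$, I get
\[
   \| u_h \|_{(-1,0)}^2 \approx \sum_{i=0}^{N-2} f_i^2 \frac{h_i}{1-\xi_i} + f_{N-1}^2 .
\]

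\textbf{Matching the weights.} It remains to check that $h_i/(1-\xi_i) \approx \rho_i/(1-\xi_i)$ for each $i$, i.e.\ that $h_i \approx \rho_i$. For interior nodes, Sündermann's angle bounds give $h_i \approx N^{-1}\sqrt{1-\xi_i^2}$, which matches \eqref{eq:szego-weight}. At $i=0$ both $h_0$ and $\rho_0$ are $\approx N^{-2}$. At $i = N-1$ the high-order weight is $\rho_{N-1}/(1-\xi_{N-1}) \approx N^{-2}/N^{-2} \approx 1$, which matches the coefficient $1$ of $f_{N-1}^2$ obtained on the low-order side.

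\textbf{Main obstacle.} The delicate part is the node adjacent to the singularity at $x=1$. There the weight $(1-x)^{-1}$ is not comparable across the interval, so the local mass-matrix argument fails. This is why I handle the last interval by the explicit linear formula, using $u_h(1)=0$, and then check that its contribution $f_{N-1}^2/2$ matches the order-one quadrature weight. A secondary technical point is making sure the Szegő asymptotics \eqref{eq:szego-weight} hold uniformly up to the near-endpoint nodes $i=1$ and $i=N-1$; I will use them only up to constants, together with \eqref{eq:sundermann}.
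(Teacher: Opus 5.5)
Your proposal is correct and follows essentially the same route as the paper. Both arguments use three ingredients: exact Gauss--Lobatto quadrature of $(1-x)^{-1}u_N^2 = (1-x)q^2$; an interval-by-interval comparison of $\|u_h\|_{(-1,0)}^2$ based on $h_i \approx \rho_{i,N}^{(0,0)}$ and the comparability of $(1-x)^{-1}$ on every subinterval except the last (where your bookkeeping, restricting to $i \le N-2$ and treating $\rho_{0,N}^{(0,0)}$ explicitly rather than via Szeg\H{o}'s interior estimate, is slightly more careful than the paper's); and the explicit computation of the last-interval contribution $\tfrac12 u_N(\xi_{N-1,N}^{(0,0)})^2$, matched against the order-one weight $\rho_{N-1,N}^{(0,0)}/(1-\xi_{N-1,N}^{(0,0)})$.
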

\begin{proof}
   Since $u_N(1) = 0$, $(1-x)^{-1} u_N(x)^2$ is a polynomial of degree at most $2N - 1$, and
   \[
      \| u_N \|_{(-1,0)}^2
         = \int_{-1}^1 (1-x)^{-1} u_N(x)^2 \, dx
         = \int_{-1}^1 (1-x) q(x)^2 \, dx
         = \sum_{i=0}^{N-1} (1 - \xi_{i,N}^{(0,0)})^{-1} u_N(\xi_{i,N}^{(0,0)})^2 \rho_{i,N}^{(0,0)}
   \]
   by \eqref{eq:lobatto-exact}.
   Let $h_i$ denote the width of the interval $[\xi_{i,N}^{(0,0)}, \xi_{i+1,N}^{(0,0)}]$, and on the same interval, let $m_i$ denote the minimum of $(1-x)^{-1}$, and let $M_{i}$ denote its maximum.
   Note that, from \eqref{eq:sundermann} and \eqref{eq:szego-weight}, $h_i \approx \rho_{i,N}^{(0,0)} \approx \rho_{i+1,N}^{(0,0)}$.
   Furthermore, for $i < N$, $(1 - \xi_{i,N}^{(0,0)})^{-1} = m_i \approx M_i = (1 - \xi_{i+1,N}^{(0,0)})$.
   Then,
   \begin{align*}
      \| u_h \|_{(-1,0)}^2
         = \sum_{i=0}^{N-1} \int_{\xi_{i,N}^{(0,0)}}^{\xi_{i+1,N}^{(0,0)}} (1-x)^{-1} u_h(x)^2 \, dx
         \gtrsim \sum_{i=0}^{N-1} m_i h_i (u_h(\xi_{i,N}^{(0,0)})^2 + u_h(\xi_{i+1,N}^{(0,0)})^2)
         \approx \| u_N \|_{(-1,0)}^2.
   \end{align*}
   Similarly,
   \begin{align*}
      \| u_h \|_{(-1,0)}^2
         &\lesssim \sum_{i=0}^{N-2} M_i h_i (u_h(\xi_{i,N}^{(0,0)})^2 + u_h(\xi_{i+1,N}^{(0,0)})^2)
            + \int_{\xi_{N-1}^{(0,0)}}^{\xi_{N}^{(0,0)}} (1-x)^{-1} u_h(x)^2 \, dx \\
         &\lesssim \| u_N \|_{(-1,0)}^2 + \int_{\xi_{N-1}^{(0,0)}}^{\xi_{N}^{(0,0)}} (1-x)^{-1} u_h(x)^2 \, dx.
   \end{align*}
   Since $u_h(1) = 0$, on the endpoint interval $[\xi_{N-1}^{(0,0)}, \xi_{N}^{(0,0)}]$, $u_h(x) = u_N(\xi_{N-1,N}^{(0,0)}) (1-x) (1 - \xi_{N-1,N}^{(0,0)})^{-1}$, and
   \[
      \int_{\xi_{N-1}^{(0,0)}}^{\xi_{N}^{(0,0)}} (1-x)^{-1} u_h(x)^2 \, dx
         = \frac{1}{2} u_N(\xi_{N-1}^{(0,0)})^2 \lesssim \| u_N \|_{(-1,0)}^2,
   \]
   since $(1 - \xi_{i,N}^{(0,0)})^{-1} \rho_{i,N}^{(0,0)} \approx 1$.
\end{proof}

\begin{lemma}
   \label{lem:weighted-l2}
   Let $u_N \in \PP^{N-1}$, and let $u_h \in V_{h,N}^{(0,0)}$ be such that $u_h(\xi_{i,N}^{(0,0)}) = u_N(\xi_{i,N}^{(0,0)})$ for all $i$.
   Then, for $\alpha,\beta \in \{ 0, 1 \}$,
   \[
      \| u_N \|_{(\alpha,\beta)} \approx \| u_h \|_{(\alpha,\beta)}.
   \]
\end{lemma}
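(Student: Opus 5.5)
Since $u_N \in \PP^{N-1}$, we have $u_N^2 (1-x)^\alpha (1+x)^\beta \in \PP^{2N-2+\alpha+\beta} \subseteq \PP^{2N}$, with degree at most $2N-1$ unless $\alpha=\beta=1$. The plan mirrors the proof of \Cref{lem:inv-weighted-l2}: express $\|u_N\|_{(\alpha,\beta)}^2$ as a Gauss--Lobatto quadrature sum, and then compare that sum with $\|u_h\|_{(\alpha,\beta)}^2$ interval by interval.

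\textbf{Step 1 (quadrature representation).} For $(\alpha,\beta)\neq(1,1)$, the exactness \eqref{eq:lobatto-exact} gives
\[
   \|u_N\|_{(\alpha,\beta)}^2 = \sum_{i=0}^N u_N(\xi_{i,N}^{(0,0)})^2 (1-\xi_{i,N}^{(0,0)})^\alpha(1+\xi_{i,N}^{(0,0)})^\beta \rho_{i,N}^{(0,0)}.
\]
For $(\alpha,\beta)=(1,1)$ the integrand has degree $2N$. Here I will use \Cref{lem:quadrature-error}: the $x^{2N}$ coefficient of $(1-x^2)u_N^2$ is $-c^2$, where $c$ is the leading coefficient of $u_N$, so the quadrature sum equals the integral minus a nonnegative aliasing term of size $c^2 \cdot O(N^{-1}2^{-2N}\cdot\text{const})$. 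To make this comparable, I will write $u_N = c\,\tilde k^{-1}P_{N-1}^{(1,1)} + (\text{lower})$ and use \eqref{eq:jacobi-norm} and \eqref{eq:jacobi-leading}. This should give an aliasing term bounded by a fixed fraction ($<1$ uniformly in $N$) of $\|u_N\|_{(1,1)}^2$. Hence the quadrature sum lies between $\tfrac{c_0}{1}\|u_N\|^2_{(1,1)}$ and $\|u_N\|^2_{(1,1)}$ for some fixed $c_0>0$.

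Simpler alternative for this case: the endpoint terms vanish, so the sum equals the $(1,1)$ discrete norm. Alternatively, since the nodes are the $(1,1)$-Gauss points of $P_{N-1}^{(1,1)}$ together with the endpoints, \eqref{eq:lobatto-bound}-type estimates (Gauss quadrature with $N-1$ interior nodes, for polynomials of degree $N-1$) give equivalence directly. I would try this first.

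\textbf{Step 2 (comparison with the piecewise linear function).} Write $w(x)=(1-x)^\alpha(1+x)^\beta$ and $f_i=u_N(\xi_{i,N}^{(0,0)})$. On each interval $[\xi_i,\xi_{i+1}]$, $u_h$ is linear, so
\[
   \int_{\xi_i}^{\xi_{i+1}} u_h^2 w\,dx \approx h_i\, \bar w_i\,(f_i^2+f_{i+1}^2)
\]
provided $w$ is comparable to a constant $\bar w_i$ on the interval. This holds for interior intervals, by \eqref{eq:sundermann} (adjacent nodes have comparable distance to $\pm1$). On the two end intervals, $w$ vanishes at the endpoint, and I take $\bar w_0 \approx h_0 \approx N^{-2}$. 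The elementary bound $\int_J u_h^2 w \gtrsim h\,\bar w\,(f_i^2+f_{i+1}^2)$ still holds for a linear function against a weight $(1+x)$ on $J$ (a direct computation for linear times linear weight), as does the upper bound.

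\textbf{Step 3 (matching the weights).} From \eqref{eq:szego-weight} with $(\alpha,\beta)=(0,0)$, $h_i\approx\rho_{i,N}^{(0,0)}\approx \rho_{i+1,N}^{(0,0)}$, so $h_i\bar w_i \approx w(\xi_i)\rho_i$ for all interior nodes. At the endpoint nodes the quadrature weight vanishes, but the endpoint value $f_0^2$ appears on the $u_h$ side with weight $h_0^2\approx N^{-4}$. This is the main obstacle: bounding $N^{-4}f_0^2 \lesssim \|u_N\|_{(\alpha,\beta)}^2$ when $\beta=1$. I will handle it as follows. For a linear function on $J_0$, $f_0^2 \lesssim f_1^2 + h_0^2|\partial_x u_N|_{L^\infty(J_0)}^2$; it therefore suffices to bound $h_0^2 f_0^2$ by terms already controlled. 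Using Markov's inequality for $\PP^{N-1}$ together with \eqref{eq:lobatto-bound}-type control, $|f_0-f_1|\le h_0\|\partial_x u_N\|_\infty \lesssim N^{-2}N^2\|u_N\|_\infty$. Combined with $\|u_N\|_{L^\infty}^2 \lesssim N^{2+2}\|u_N\|_{(0,1)}^2$-type weighted Nikolskii bounds (for example via \Cref{lem:disc-norm-equiv} and Jacobi pointwise bounds), this gives $h_0^2 f_0^2 \lesssim \|u_N\|^2_{(\alpha,\beta)}$. This is where the restriction $u_N\in\PP^{N-1}$ is used: it ensures $u_N$ cannot concentrate like $\Phi_N$ at the endpoint. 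Indeed, Step 1 already shows that $f_0$ with weight $h_0 w$ is controlled by the exact quadrature. Summing over intervals then yields $\|u_h\|_{(\alpha,\beta)}\approx\|u_N\|_{(\alpha,\beta)}$.
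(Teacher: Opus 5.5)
\textbf{The case $(\alpha,\beta)=(1,1)$ cannot be proved, because the equivalence is false there.} Your Step 1 claims the aliasing term is at most a fixed fraction $<1$ of $\|u_N\|_{(1,1)}^2$, and this fails. The interior Gauss--Lobatto nodes form the $(N-1)$-point $(1,1)$-Gauss rule. Hence the quadrature sum equals the squared $(1,1)$-norm of $u_N$ with its $P_{N-1}^{(1,1)}$-component removed, and the aliasing term from \Cref{lem:quadrature-error} is exactly the squared norm of that component. So the fraction reaches $1$ for $u_N=\partial_x P_N^{(0,0)}=\tfrac12(N+1)P_{N-1}^{(1,1)}\in\PP^{N-1}$: this $u_N$ vanishes at every interior node, $(1-x^2)$ kills the endpoint terms, and the sum is $0$. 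Your ``simpler alternative'' fails for the same reason, since an $n$-point Gauss rule annihilates the degree-$n$ orthogonal polynomial and is not a norm on $\PP^n$.

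For this $u_N$, $u_h$ vanishes except on the two end intervals, where it decays linearly from $|u_N(\pm1)|=N(N+1)/2$ to zero. Since $1-x^2$ is at most twice the distance to the nearest endpoint and $h_0\approx N^{-2}$ by \eqref{eq:sundermann}, we get $\|u_h\|_{(1,1)}^2\le N^2(N+1)^2h_0^2/12\lesssim 1$. On the other hand, integrating by parts with Legendre's equation gives $\|u_N\|_{(1,1)}^2=N(N+1)\|P_N^{(0,0)}\|_0^2=2N(N+1)/(2N+1)$. So $\|u_N\|_{(1,1)}\lesssim\|u_h\|_{(1,1)}$ fails by a factor $\sqrt N$. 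Only $\|u_h\|_{(1,1)}\lesssim\|u_N\|_{(1,1)}$ survives: the aliasing term has the favorable sign, so the quadrature sum is bounded by $\|u_N\|_{(1,1)}^2$.

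The paper's proof has the same oversight. \eqref{eq:lobatto-bound} concerns $u\in\PP^N$ sampled at nodes matching the weight, so it does not yield \eqref{eq:un-lobatto-bound} for the $(1,1)$ weight at the $(0,0)$ nodes. The lemma should be restricted to $(\alpha,\beta)\neq(1,1)$, which is all that \eqref{eq:weighted-l2-equiv} requires.

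For $(\alpha,\beta)\in\{(0,0),(1,0),(0,1)\}$ your argument is essentially the paper's. It uses exact Gauss--Lobatto quadrature, an interval-by-interval comparison with $h_i\approx\rho_{i,N}^{(0,0)}$, and a separate bound $N^{-4}u_N(\pm1)^2\lesssim\|u_N\|_{(\alpha,\beta)}^2$ on the end intervals. The paper gets that last bound from the $(\alpha,\beta)$-Jacobi--Gauss--Lobatto rule, which is exact for $u_N^2$ and has endpoint weight $\rho_{0,N}^{(\alpha,1)}\approx N^{-4}$, so $\|u_N\|_{(\alpha,\beta)}^2\ge u_N(-1)^2\rho_{0,N}^{(\alpha,\beta)}$. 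This is the concrete form of the Christoffel-function (Nikolskii) bound you invoke. Your Markov step is unnecessary, since $|f_0|\le\|u_N\|_{L^\infty}$ trivially.

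Two corrections remain. First, your final sentence is wrong: for $\beta=1$ the endpoint weight $(1+\xi_{0,N}^{(0,0)})\rho_{0,N}^{(0,0)}$ in Step 1 is zero, so Step 1 gives no control of $f_0$. Second, the hypothesis $u_N\in\PP^{N-1}$ is what makes Step 1 exact; it is not what controls the endpoint value, since $u(-1)^2\lesssim N^4\|u\|_{(\alpha,1)}^2$ holds on all of $\PP^N$.
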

\begin{proof}
   First note that since $u_N \in \PP^{N-1}$, $\deg(u_N^2 (1-x)^\alpha (1+x)^\beta) \leq 2N$, and so, by \eqref{eq:lobatto-bound},
   \begin{equation}
      \label{eq:un-lobatto-bound}
      \| u_N \|_{(\alpha,\beta)}^2 \approx \sum_{i=0}^N u_N(\xi_{i,N}^{(0,0)})^2 (1-\xi_{i,N}^{(0,0)})^\alpha (1+\xi_{i,N}^{(0,0)})^\beta \rho_i^{(0,0)}.
   \end{equation}
   Let $h_i$ denote the width of the interval $[\xi_{i,N}^{(0,0)}, \xi_{i+1,N}^{(0,0)}]$, and on the same interval, let $m_i$ denote the minimum of $(1-x)^\alpha (1+x)^\beta$, and let $M_{i}$ denote its maximum.
   Note that for $i \neq 0$, $i \neq N-1$, $m_i \approx m_{i+1}$ and $M_i \approx M_{i+1}$.
   Then,
   \begin{align*}
      \| u_h \|_{(\alpha,\beta)}^2
         &= \sum_{i=0}^{N-1} \int_{\xi_{i,N}^{(0,0)}}^{\xi_{i+1,N}^{(0,0)}} u_h(x)^2 (1-x)^\alpha (1+x)^\beta \, dx
         \geq \sum_{i=1}^{N-2} m_i \int_{\xi_{i,N}^{(0,0)}}^{\xi_{i+1,N}^{(0,0)}} u_h(x)^2  \, dx \\
         &\gtrsim \sum_{i=1}^{N-2} m_i h_i (u_h(\xi_{i,N}^{(0,0)})^2 + u_h(\xi_{i+1,N}^{(0,0)})^2)
         \gtrsim \| u_N \|_{(\alpha,\beta)}^2,
   \end{align*}
   by \eqref{eq:un-lobatto-bound}, using that $h_i \approx \rho_{i,N}^{(0,0)}$.
   For the upper bound,
   \begin{align*}
      \| u_h \|_{(\alpha,\beta)}^2
         &= \sum_{i=0}^{N-1} \int_{\xi_{i,N}^{(0,0)}}^{\xi_{i+1,N}^{(0,0)}} u_h(x)^2 (1-x)^\alpha (1+x)^\beta \, dx
         \leq \sum_{i=0}^{N-1} M_i \int_{\xi_{i,N}^{(0,0)}}^{\xi_{i+1,N}^{(0,0)}} u_h(x)^2  \, dx \\
         &\lesssim \sum_{i=1}^{N-2} M_i h_i (u_h(\xi_{i,N}^{(0,0)})^2 + u_h(\xi_{i+1,N}^{(0,0)})^2)
            + M_0 h_0 u_h(\xi_{0,N}^{(0,0)})^2 + M_N h_N u_h(\xi_{N,N}^{(0,0)})^2 \\
         &\lesssim \| u_N \|_{(\alpha,\beta)}^2 + M_0 h_0 u_h(\xi_{0,N}^{(0,0)})^2 + M_N h_N u_h(\xi_{N,N}^{(0,0)})^2.
   \end{align*}
   It remains to show that the terms $M_0 h_0 u_h(\xi_{0,N}^{(0,0)})^2$ and $M_{N-1} h_{N-1} u_h(\xi_{N,N}^{(0,0)})^2$ are bounded by $\| u_N \|_{(\alpha,\beta)}^2$.
   We consider the first term; the second term will follow analogously.
   If $\beta = 0$, this is an immediate consequence of \eqref{eq:un-lobatto-bound}, so we assume $\beta = 1$.
   Then, $M_0 h_0 \approx N^{-4}$.
   Since $u_N \in \PP^{N-1}$, the integral in $\| u_N \|_{(\alpha,\beta)}^2$ can be computed exactly using $(\alpha,\beta)$-Jacobi--Gauss--Lobatto quadrature, i.e.
   \[
      \| u_N \|_{(\alpha,\beta)}^2
         = \sum_{i=0}^N u_N(\xi_{i,N}^{(\alpha,\beta)})^2 \rho_{i,N}^{(\alpha,\beta)}
         \geq u_N(\xi_{0,N}^{(\alpha,\beta)})^2 \rho_{0,N}^{(\alpha,\beta)}
         \gtrsim N^{-4} u_N(\xi_{0,N}^{(\alpha,\beta)})^2,
   \]
   since
   \[
      \rho_{0,N}^{(\alpha,\beta)} = \frac{2^{\alpha + \beta + 1} (\beta + 1) \Gamma^2(\beta + 1) \Gamma(N) \Gamma(N + \alpha + 1)}{\Gamma(N + \beta + 1) \Gamma(N + \alpha + \beta + 2)}
      = \frac{2^{\alpha + 3}}{N(N-1)(N+\alpha)(N + \alpha + 1)}. \qedhere
   \]
   From this, it holds that $\| u_h \|_{(\alpha,\beta)}^2 \lesssim \| u_N \|_{(\alpha,\beta)}^2$, and the conclusion follows.
\end{proof}

\subsection{Choice of nodal lattice points}
\label{sec:lattice-points}

The degrees of freedom specified in \Cref{thm:unisolvent} form a unisolvent set for any choice of distinct lattice points \eqref{eq:lattice-points}.
However, the choice of nodal point locations has an important impact on interpolation accuracy, conditioning of the resulting system matrices, and construction of preconditioners.
The preceding results concern the use of the Gauss--Lobatto abscissas $x_i = y_i = \xi_{i,N}^{(0,0)}$ as nodal interpolation points.
The polynomial interpolation stability results are the main ingredient required to prove $N$-independent spectral equivalence for the low-order-refined systems.
Additionally, the use of Gauss--Lobatto points as nodal points for quadrilateral elements is common.
If these points are also used for the lattice on triangular elements, then interelement continuity on mixed meshes with both triangular and quadrilateral elements may be enforced simply and naturally in the standard way by identifying global degrees of freedom.

In certain contexts, it may be useful to consider different choices of lattice points.
For nonconforming elements (as in discontinuous Galerkin discretizations), the identification of global dofs is not required, and so the requirement that the nodal points coincide on element interfaces may be relaxed.
This allows the use of Jacobi--Gauss--Radau points in $y$, and Jacobi--Gauss points in $x$.
Fully interior points such as Jacobi--Gauss quadrature are disadvantageous since the space compatibility conditions are most naturally given in terms of the right endpoint value.
The $(0,1)$-Jacobi--Gauss-Radau points give rise to a consistent mass matrix that is fully diagonal.

The use of Jacobi--Gauss--Lobatto points with Jacobi weights other than $(0,0)$ also presents advantages in the context of explicit time integration.
For example, the $(0,1)$-Jacobi--Gauss--Lobatto points give rise to a mass matrix, which, after diagonal preconditioning has condition number $\mathcal{O}(1)$.
Furthermore, the polynomial interpolation operator is stable in the $(0,1)$-weighted $H^1$ seminorm, cf.\ \Cref{thm:interp-stability}.
However, the downside of this choice of point set is that they are not distributed symmetrically about the origin, making them unsuitable for use with conforming elements on unstructured meshes.

The $(1,1)$-Jacobi--Gauss--Lobatto points are symmetric about the origin, and the associated mass matrix, after diagonal preconditioning, has $\mathcal{O}(1)$ condition number, as can be seen through an argument similar to the proof of \Cref{cor:mass-gen-eig} and \Cref{lem:disc-norm-equiv}.
However, the polynomial interpolation operator at these points is not uniformly stable in the $(0,1)$-weighted $H^1$ seminorm.

\section{Numerical results}
\label{sec:numerical}

In this section, we numerically study the finite elements and preconditioners described in the preceding sections.
We confirm the analytical results, and study the performance of the preconditioner applied to large, unstructured meshes, using algebraic multigrid as an approximate solver for the low-order-refined system.
For the large-scale test cases in \Cref{sec:meshes,sec:fictitious}, we have implemented the finite elements are collapsed lattice low-order-refined spaces in the framework of the MFEM finite element library \cite{Anderson2020,Andrej2024}.
The algebraic multigrid methods are provided by the \textit{hypre} library \cite{Falgout2002}.

\subsection{One-dimensional estimates}

In this section, we numerically study the constants in the norm equivalences of \Cref{lem:norm-equivalences} and \Cref{cor:norm-equivalences}.
For each of the equivalences, we numerically compute the best constants in the upper and lower bounds for given polynomial degree $N$ by forming the matrix that induces the corresponding norms or seminorms, and then computing the maximum and minimum generalized eigenvalues.
In the case of the weighted $H^1$ seminorms, we first remove the one-dimensional nullspace corresponding to the constant function.
The computed constants for $2 \leq N \leq 256$ are shown in \Cref{fig:1d-bounds}.
In all cases, we see that the lower bound constants are greater than 0.3 and the upper bound constants are less than 3.1.
The ratios of the upper bound to the lower bound ares bounded by 7.05.
The largest ratios occur for the estimate \eqref{eq:inv-weighted-l2-equiv}, $\| u_N \|_{(-1,0)}^2 \approx \| u_h \|_{(-1,0)}^2$ (where $u_N(1) = u_h(1) = 0$).

\begin{figure}
   \centering
   \hspace*{\fill}
   \includegraphics{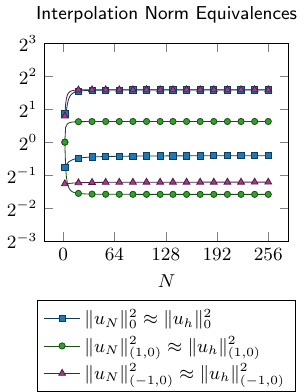}%
   \hspace*{\fill}%
   \includegraphics{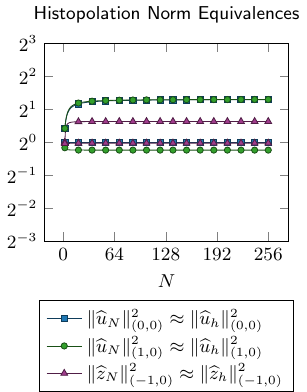}%
   \hspace*{\fill}
   \caption{Computed constants for the upper and lower bounds of the 1D norm equivalences in \Cref{lem:norm-equivalences} and \Cref{cor:norm-equivalences}.}
   \label{fig:1d-bounds}
\end{figure}

\subsection{Condition numbers on the triangle}
\label{sec:condition-number}

We presently consider the condition numbers $\kappa(A_0^{-1} A)$ on the reference triangle $\Delta$, for $A \in \{ A_{\Vh}, A_{\Wh}, A_{\Zh} \}$ and $A_0$ the corresponding low-order-refined matrix in $\{ A_{V_0}, A_{\bm W_0}, A_{Z_0} \}$.
For $2 \leq N \leq 128$, we compute these matrices, and compute the condition number by computing the extremal eigenvalues of $A_0^{-1} A$.
The eigenvalues are computed on the orthogonal complement of the nullspace (i.e.\ the constant functions for $A_{\Vh}$ and the irrotational functions for $A_{\Wh}$).
In the case of $A_{\Wh}$, this is performed using the singular value decomposition of the discrete curl matrix $C_{\bm W}$.
The results are shown in the left panel of \Cref{fig:tri-cond}.

Note that since, as described in the proof of \Cref{thm:spectral-equivalence}, the $\Hcurl$ stiffness matrices can be written as $A_{\Wh} = C_{\bm W}^T A_{Z_h} C_{\bm W}$ and $A_{\bm W_0} = C_{\bm W}^T A_{Z_0} C_{\bm W}$, it holds that $\kappa(A_{\bm W_0}^{-1}A_{\bm W_h}) \leq \kappa(A_{Z_0}^{-1}A_{Z_h})$ for all $N$.
Since the discrete curl is surjective, we have in fact $\kappa(A_{\bm W_0}^{-1}A_{\bm W_h}) = \kappa(A_{Z_0}^{-1}A_{Z_h})$; this is confirmed in the numerical results.

\begin{figure}
   \centering
   \hspace*{\fill}%
   \includegraphics{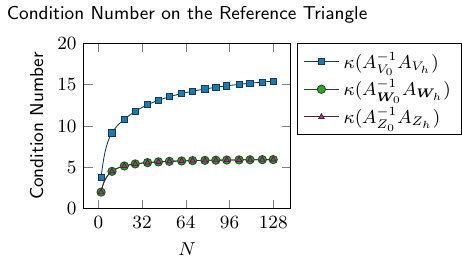}%
   \hspace*{\fill}%
   \includegraphics{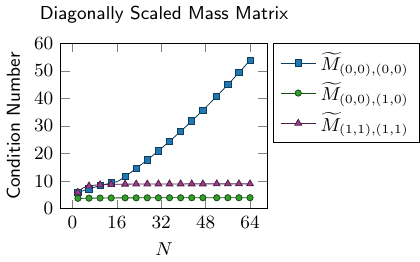}%
   \hspace*{\fill}

   \caption{Left: condition number of preconditioned matrices $A_0^{-1}A$ on the reference triangle $\Delta$. Right: diagonally preconditioned $H^1$ mass matrices.}.
   \label{fig:tri-cond}
\end{figure}

We also consider the condition number of the $H^1$ mass matrix on the space $V_h$, scaled by the inverse of its diagonal.
We let $\widetilde{M}_{(\alpha_x,\beta_x),(\alpha_y,\beta_y)} = D_{(\alpha_x,\beta_x),(\alpha_y,\beta_y)}^{-1} M_{(\alpha_x,\beta_x),(\alpha_y,\beta_y)}$, where $(\alpha_x,\beta_x)$ denotes the Jacobi weights of the interpolation points in the $x$ direction of the lattice, and $(\alpha_y, \beta_y)$ denotes the Jacobi weights in the $y$ direction;
see \Cref{sec:lattice-points} for a discussion of these choices.
The Duffy transformation induces a weight of $(1-y)$ in the mass matrix integral.
By \Cref{cor:mass-gen-eig}, using Gauss--Lobatto points in the $y$ direction will incur a factor of $N$ in the condition number, and by \Cref{lem:norm-equivalences}, the factor from the $x$ direction is $\mathcal{O}(1)$.
This is confirmed by the results shown in the right panel of \Cref{fig:tri-cond}.
The triangular mass matrix using $(0,0),(0,0)$ interpolation points exhibits linear growth in the condition number with respect to $N$.
By the Jacobi--Gauss--Lobatto quadrature bound \eqref{eq:lobatto-bound}, using $(1,0)$ points in the $y$ direction and retaining $(0,0)$ points in the $x$ direction should result in asymptotically constant condition number.
This too is confirmed in the numerical results.
However, this choice may not be practical, since the $(1,0)$ points are not symmetric, and using different points in the $x$ and $y$ directions may result in unaligned nodes on triangular meshes.
For this reason, we also consider using $(1,1)$ points in both directions.
As mentioned in \Cref{sec:lattice-points}, by an argument similar to that of \Cref{lem:disc-norm-equiv}, it can be shown that this set of points also results in uniformly bounded condition numbers, as is confirmed in the numerical results.

\subsection{Unstructured meshes and algebraic multigrid}
\label{sec:meshes}

\begin{figure}
   \centering

   \begin{tabular}{ccc}
      \includegraphics[height=3cm]{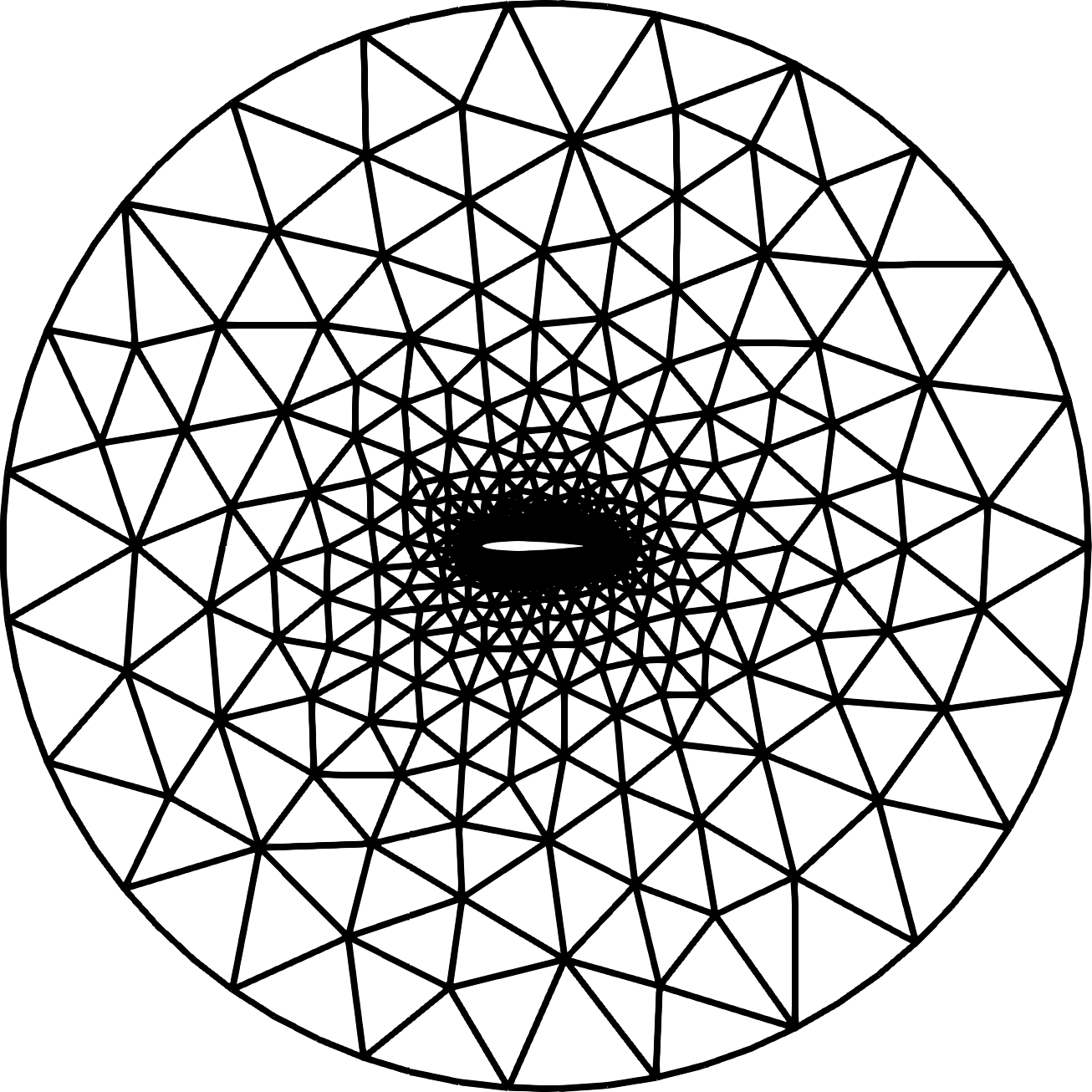} &
      \includegraphics[scale=0.9]{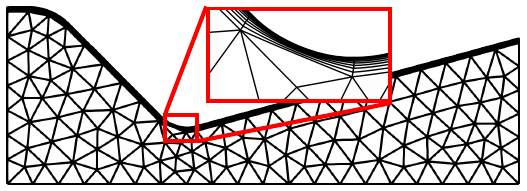} &
      \includegraphics[height=2.9cm]{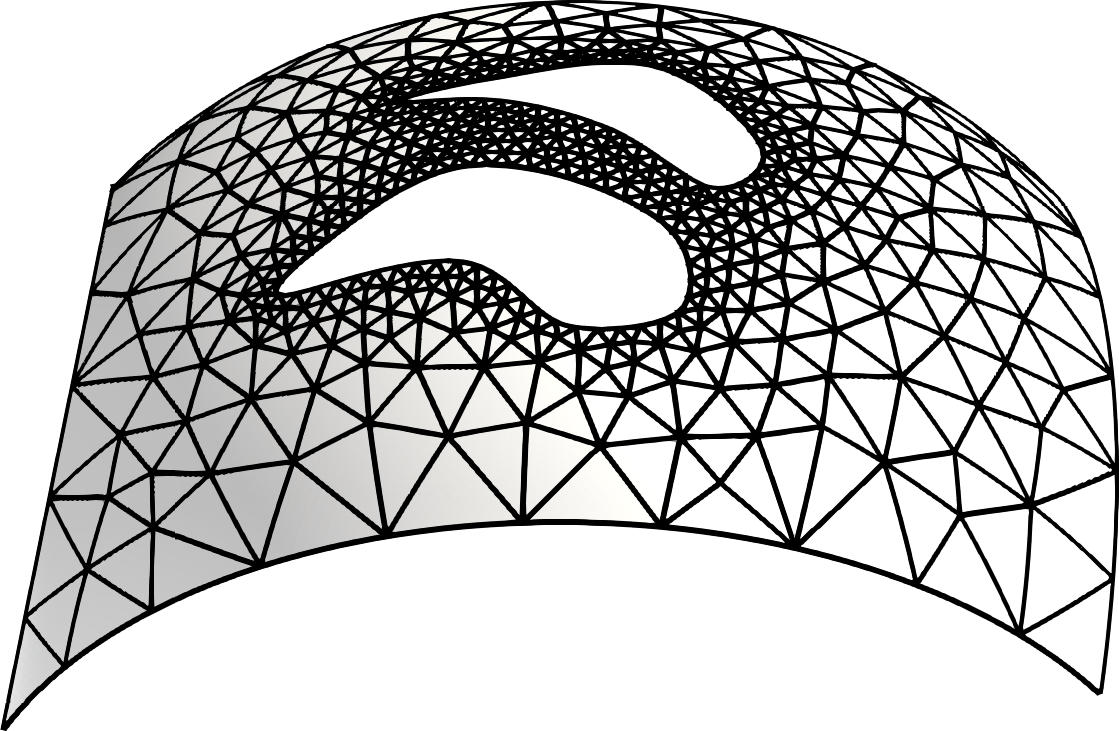} \\[10pt]
      \includegraphics[height=3cm]{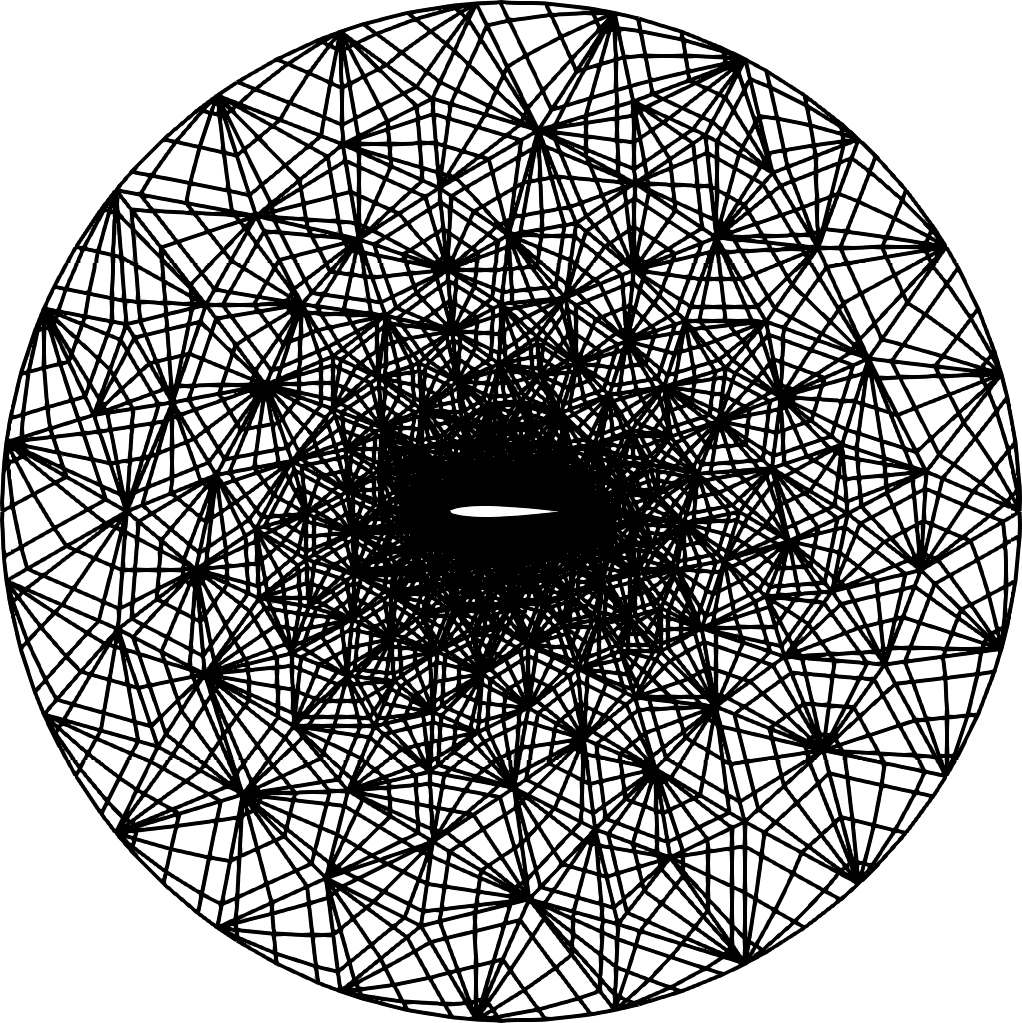} &
      \includegraphics[scale=0.9]{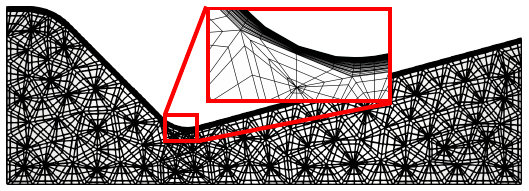} &
      \includegraphics[height=2.9cm]{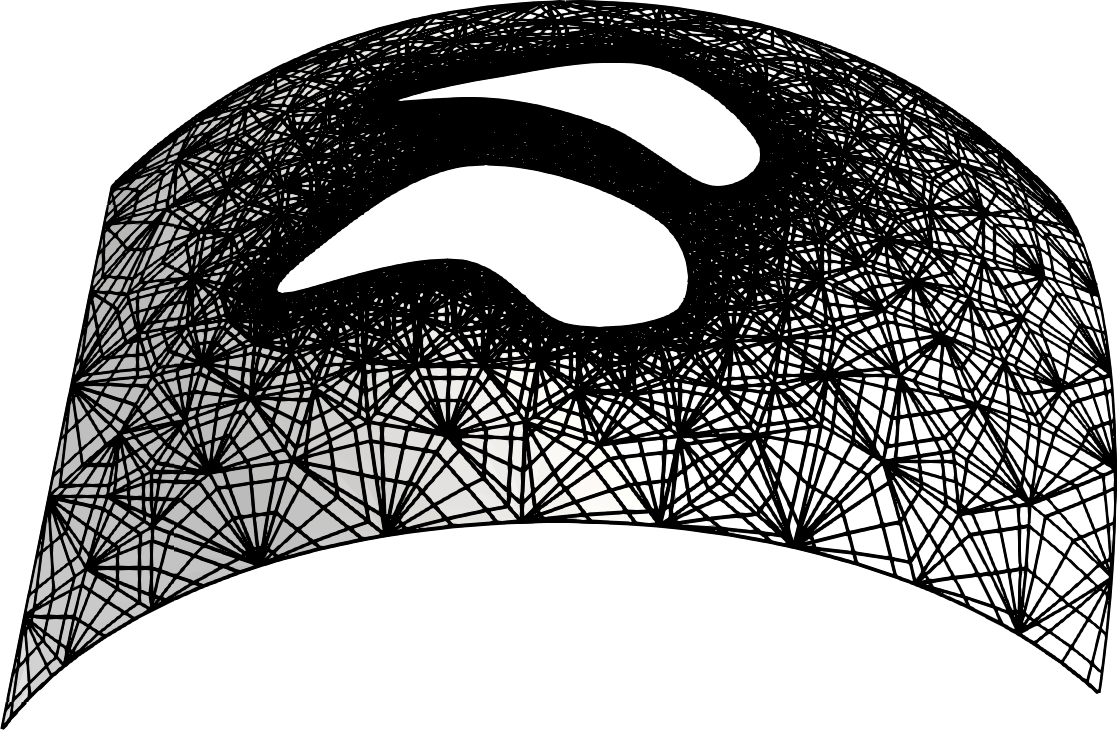} \\[10pt]
      (a) NACA0012 & (b) BGM45-15 nozzle & (c) Spherical surface patch
   \end{tabular}

   \caption{
      Meshes used for \Cref{sec:meshes} test cases.
      Top row: original meshes.
      Bottom row: low-order-refined meshes.
      An inset of the curved boundary layer elements is shown in panel (b).
   }
   \label{fig:meshes}
\end{figure}

In this section, we numerically study the performance of the low-order-refined preconditioner on several representative unstructured meshes, intended to assess the performance of the method in a variety of configurations.
The meshes considered, together with their low-order-refined variants, are shown in \Cref{fig:meshes}.
They include:
(a) a triangular mesh of the NACA0012 airfoil, with degree-3 curved elements at the surface of the airfoil and at the outer circular boundary;
(b) a mixed mesh of the BGM45-15 nozzle \cite{Back1965}, with anisotropic quadrilateral elements at the nozzle surface, and isotropic triangular elements away from the boundary layer; and, (c) a graded surface mesh consisting of a spherical patch with holes.

In each of the cases considered, we solve the problem
\begin{equation}
   \label{eq:mass-plus-laplace}
   u - \Delta u = f \qquad \text{in $\Omega$}
\end{equation}
with homogeneous Dirichlet boundary conditions on $\partial\Omega$ by looking for $u_h \in \Vh$ such that
\[
   a(u_h, v_h) := (u_h, v_h) + (\nabla u_h, \nabla v_h) = (f, v_h)
\]
for all $v_h \in V_h$.
Let $A$ denote the system matrix associated with the bilinear form $a(\cdot \, , \cdot)$ on $\Vh$, and let $A_0$ denote the system matrix associated with the same form on $V_0$.
We solve the system using the conjugate gradient method with a relative tolerance of $10^{-10}$.
As a preconditioner for $A$, we consider either $A_0^{-1}$ or $B_0$, where $A_0^{-1}$ is computed using a sparse direct factorization, and $B_0$ represents one V-cycle of algebraic multigrid applied to $A_0$.
We use the MUMPS library to compute the factorization of $A_0$ \cite{Amestoy2001}.
The V-cycle $B_0$ is computed using \textit{hypre}'s BoomerAMG \cite{Henson2002}.
We use Falgout coarsening with modified classical interpolation, two sweeps of symmetric $\ell^1$ hybrid Gauss--Seidel smoothers, no aggressive coarsening levels, and a strength threshold of 0.5.
In the results below, we report the number of iterations using both $A_0^{-1}$ and $B_0$.
The low-order-refined mesh results in stretched, anisotropic elements and clustering at the collapsed vertices;
consequently, the system $A_0$ itself, despite its sparsity, is somewhat challenging for algebraic multigrid methods.
The application of ordered ILU and line smoothers to address this difficulty in the case of quadrilateral elements was discussed in \cite{Pazner2020a}.

For the first test case, we consider a triangular mesh of the NACA0012 airfoil, consisting of 1,846 triangular elements.
Curved degree-3 elements at the surface of the boundary and at the circular farfield boundary are used.
Although the analysis was performed for the case of affine element transformations, the low-order-refined preconditioner remains effective in cases with curved elements.
The convergence results for an order-refinement study are presented in \Cref{tab:naca}.
With increasing order, up to $N=32$, we see only a mild increase in the number of iterations, consistent with the condition numbers computed in \Cref{sec:condition-number}.
Consistent with the discussion in \Cref{rem:sparsity}, the average number of nonzeros per row of $A_0$ approaches 9.
This is in contrast to the high-order matrix $A$, for which the number of nonzeros per row scales as $N^2$.

\begin{table}
   \centering
   \caption{
      Convergence results for the NACA0012 mesh.
      Iteration counts are reported for $A_0^{-1}$ as preconditioner (using a direct solver) and, in parentheses, for an algebraic multigrid V-cycle $B_0$.
   }
   \label{tab:naca}

   \small

   \begin{tabular}{c c C{7} C{4.2} C{1.2}}
      \multicolumn{5}{c}{NACA0012, $N$ refinement} \\
      \toprule
      {$N$} & {\# It.} & {\# DOFs} & {$\nnz(A) / \text{row}$} & {$\nnz(A_{0}) / \text{row}$} \\
      \midrule
      2 & 18 (18) & 5766 & 11.68 & 8.12 \\
      4 & 23 (27) & 26300 & 28.46 & 8.62 \\
      8 & 25 (33) & 111672 & 86.17 & 8.82 \\
      16 & 30 (40) & 459632 & 297.66 & 8.91 \\
      32 & 38 (52) & 1864416 & 1104.66 & 8.96 \\
      \bottomrule
   \end{tabular}
   \hfill
   \begin{tabular}{c c C{8} C{4.2} C{1.2}}
      \multicolumn{5}{c}{NACA0012, $h$ refinement ($N = 8$)} \\
      \toprule
      {Ref.} & {\# It.} & {\# DOFs} & {$\nnz(A) / \text{row}$} & {$\nnz(A_{0}) / \text{row}$} \\
      \midrule
      0 & 25 (33) & 111672 & 86.17 & 8.82\\
      1 & 27 (36) & 444864 & 86.49 & 8.84\\
      2 & 28 (40) & 1775808 & 86.64 & 8.85\\
      3 & 28 (44) & 7095936 & 86.72 & 8.86\\
      4 & 27 (47) & 28369152 & 86.76 & 8.86\\
      \bottomrule
   \end{tabular}
\end{table}

For the NACA0012 case, we also consider an $h$-refinement study, where the polynomial degree is fixed at $N = 8$, and the mesh is uniformly refined up to four times.
Under $h$ refinement, the condition number of $A_0^{-1}A$ is expected to remain roughly constant, which is confirmed by the results shown in the right panel of \Cref{tab:naca}.
A mild increase in the number of iterations using the AMG preconditioner is observed.
This is on account of marginally slower AMG convergence when applied to $A_0$.
The number of nonzeros per row for the high-order and low-order system remain roughly constant, but the low-order system has about $10\times$ fewer nonzeros.

In \Cref{tab:timings} we compare the runtimes of a matrix-based and matrix-free solution algorithms, illustrating the benefit of low-order-refined preconditioning in the context of matrix-free methods.
We measure the matrix assembly time, the time to compute one matrix-vector product (or, in the case of matrix-free methods, one operator evaluation), and the total time to solution, including conjugate gradient iterations with low-order-refined algebraic multigrid preconditioning.
The computational complexity of standard (non sum-factorized) matrix assembly algorithms scales as $N^{3d}$ in $d$ spatial dimensions.
We observe that the asymptotic growth in assembly time (and total time to solution, which at high orders is dominated by assembly time) approaches 6.
The number of nonzeros in the system matrix and the number of operations to compute a matrix-vector product scale as $N^2$; this too is confirmed in our results.
In contrast, the matrix-free assembly algorithm requires $\mathcal{O}(N^d)$ operations, and the matrix-vector product requires $\mathcal{O}(N^{d+1})$ operations.
These scalings are confirmed in our results.
Asymptotically, the time-to-solution using matrix-free methods scales as $N^3$.

\begin{table}
   \caption{
      Comparison of runtimes (in seconds) of matrix-based and matrix-free methods for the NACA0012 test case.
   }
   \label{tab:timings}
   \centering
   \small
   \begin{tabular}{c|lc|lc|lc}
      \multicolumn{7}{c}{Matrix-based runtimes} \\
      \toprule
      $N$ & \multicolumn{1}{c}{Assembly} & Rate & \multicolumn{1}{c}{Matvec} & Rate & \multicolumn{1}{c}{Total} & Rate \\
      \midrule
      2 & $ 2.84 \times 10^{-3} $ & --- & $ 7.80 \times 10^{-6} $ & --- & $ 2.12 \times 10^{-2} $ & --- \\
      4 & $ 1.14 \times 10^{-2} $ & 2.01 & $ 3.64 \times 10^{-5} $ & 2.22 & $ 6.34 \times 10^{-2} $ & 1.58 \\
      8 & $ 1.65 \times 10^{-1} $ & 3.88 & $ 8.56 \times 10^{-4} $ & 4.57 & $ 3.83 \times 10^{-1} $ & 2.65 \\
      16 & $ 5.16 \times 10^{0} $ & 4.99 & $ 1.77 \times 10^{-2} $ & 4.62 & $ 6.87 \times 10^{0} $ & 4.22 \\
      32 & $ 5.04 \times 10^{2} $ & 6.61 & $ 2.60 \times 10^{-1} $ & 3.88 & $ 5.23 \times 10^{2} $ & 6.25 \\
      \bottomrule
   \end{tabular}

   \vspace{12pt}

   \begin{tabular}{c|ccr|ccr|lcr}
      \multicolumn{10}{c}{Matrix-free runtimes} \\
      \toprule
      $N$ & Assembly & Rate & \multicolumn{1}{c|}{Speedup} & Matvec & Rate & \multicolumn{1}{c|}{Speedup} & \multicolumn{1}{c}{Total} & Rate & \multicolumn{1}{c}{Speedup} \\
      \midrule
      2 & $ 6.48 \times 10^{-4} $ & --- & $ 4.38 \times $ & $ 4.79 \times 10^{-5} $ & --- & $ 0.16 \times $ & $ 1.89 \times 10^{-2} $ & --- & $ 1.12 \times $ \\
      4 & $ 1.76 \times 10^{-3} $ & 1.44 & $ 6.50 \times $ & $ 7.35 \times 10^{-5} $ & 0.62 & $ 0.49 \times $ & $ 5.03 \times 10^{-2} $ & 1.41 & $ 1.26 \times $ \\
      8 & $ 4.88 \times 10^{-3} $ & 1.47 & $ 33.79 \times $ & $ 3.63 \times 10^{-4} $ & 2.39 & $ 2.36 \times $ & $ 1.75 \times 10^{-1} $ & 1.81 & $ 2.19 \times $ \\
      16 & $ 1.55 \times 10^{-2} $ & 1.68 & $ 331.88 \times $ & $ 2.54 \times 10^{-3} $ & 2.66 & $ 6.98 \times $ & $ 8.86 \times 10^{-1} $ & 2.06 & $ 7.76 \times $ \\
      32 & $ 5.77 \times 10^{-2} $ & 1.89 & $ 8732.11 \times $ & $ 1.74 \times 10^{-2} $ & 2.77 & $ 15.01 \times $ & $ 4.87 \times 10^{0} $ & 2.46 & $ 107.41 \times $ \\
      \bottomrule
   \end{tabular}
\end{table}

As a next test case, we consider the BGM45-15 nozzle geometry, which was studied in \cite{Back1965}.
As is common in applications with boundary layers, this mesh has several layers of thin, anisotropic quadrilateral elements adjacent to the top boundary.
Like in the previous case, curved degree-3 elements are used to accurately represent the boundary curvature.
Away from the boundary layer, isotropic triangles are used.
Quadrilaterals are often preferred for boundary layer elements since high-quality orthogonal elements can be constructed.
Away from the boundary, unstructured meshing algorithms often result in triangular elements, resulting in a mixed mesh.
The low-order-refined preconditioner can be applied to such methods in a straightforward manner.
The quadrilateral elements are refined using the square Gauss--Lobatto lattice, and the triangular elements are refined using the collapsed triangular lattice.
All edges are refined using Gauss--Lobatto points, so interface compatibility is automatic.
The convergence results for this case are presented in \Cref{tab:nozzle-surf}.
These results are similar to those of the NACA0012 test case.

For a final test case, we consider a topologically 2D surface mesh embedded in 3D space.
The geometry for this case is a patch of a spherical surface with two holes.
The mesh elements are graded to capture the smaller geometric features.
In this case, the Laplace--Beltrami operator is used in \eqref{eq:mass-plus-laplace}.
The results for this case are shown in \Cref{tab:nozzle-surf}.
The convergence results for all three test cases are quite similar.

\begin{table}
   \caption{
      Convergence results for the BGM45-15 nozzle and spherical surface patch mesh.
      Iteration counts are reported for $A_0^{-1}$ as preconditioner (using a direct solver) and, in parentheses, for an algebraic multigrid V-cycle $B_0$.
   }
   \label{tab:nozzle-surf}
   \centering
   \small

   \begin{tabular}{c c C{6} C{4.2} C{1.2}}
      \multicolumn{5}{c}{BGM45-15 nozzle, $N$ refinement} \\
      \toprule
      {$N$} & {\# It.} & {\# DOFs} & {$\nnz(A) / \text{row}$} & {$\nnz(A_{0}) / \text{row}$} \\
      \midrule
      2 & 25 (25) & 1481 & 13.59 & 8.40 \\
      4 & 27 (30) & 6225 & 31.70 & 8.72 \\
      8 & 30 (33) & 25505 & 92.02 & 8.86 \\
      16 & 34 (39) & 103233 & 308.71 & 8.93 \\
      32 & 39 (52) & 415361 & 1126.12 & 8.97 \\
      \bottomrule
   \end{tabular}
   \hfill
   \begin{tabular}{c c C{6} C{4.2} C{1.2}}
      \multicolumn{5}{c}{Spherical surface patch, $N$ refinement} \\
      \toprule
      {$N$} & {\# It.} & {\# DOFs} & {$\nnz(A) / \text{row}$} & {$\nnz(A_{0}) / \text{row}$} \\
      \midrule
      2 & 19 (19) & 2775 & 11.63 & 8.09 \\
      4 & 24 (27) & 12607 & 28.39 & 8.60 \\
      8 & 27 (34) & 53439 & 86.05 & 8.81 \\
      16 & 29 (40) & 219775 & 297.45 & 8.91 \\
      32 & 33 (52) & 891135 & 1104.26 & 8.95 \\
      \bottomrule
   \end{tabular}
\end{table}

\subsection{Fictitious space preconditioning}
\label{sec:fictitious}

The low-order-refined preconditioners considered in this work can be used to construct fictitious space preconditioners for the standard $\Pp_N$ finite element space that are robust with respect to the polynomial degree.
The fictitious space lemma of Nepomnyaschikh is summarized as follows \cite{Nepomnyaschikh2007,Nepomnyaschikh1991}.
\begin{lemma}[{\cite[Lemma 2.3]{Nepomnyaschikh1991}}]
   \label{lem:fictitious}
   Let $A : V \to V$, and $\widehat{A} : \widehat{V} \to \widehat{V}$ be self-adjoint linear operators on Hilbert spaces, and let $R : \widehat{V} \to V$ be a surjective linear operator.
   Suppose that, for all $\widehat{v} \in \widehat{V}$,
   \begin{equation}
      \label{eq:cR}
      \| R \widehat{v} \|_A^2 \leq c_R \| \widehat{v} \|_{\widehat{A}}^2,
   \end{equation}
   and, given $v \in \Vh$, there exists some $\widehat{v} \in \widehat{V}$ such that $v = R \widehat{v}$, and
   \begin{equation}
      \label{eq:cS}
      \| \widehat{v} \|_{\widehat{A}}^2 \leq c_S \| v \|_A^2,
   \end{equation}
   for some constants $c_R$ and $c_S$.
   Then, for $B = R \widehat{A}^{-1} R^\tr$,
   \[
      c_S^{-1} (A^{-1} v, v) \leq (Bv, v) \leq c_R (A^{-1} v, v),
   \]
   and so the condition number of $BA$ is bounded by
   \[
      \kappa(BA) \leq c_R c_S.
   \]
\end{lemma}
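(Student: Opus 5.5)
The plan is the standard fictitious space argument, working with the operator $B = R\widehat{A}^{-1}R^\tr$ in the $A$-inner product. Throughout, $R^\tr$ denotes the adjoint with respect to the given inner products, i.e.\ $(R^\tr v, \widehat v) = (v, R\widehat v)$. Two estimates are needed: the upper bound $(Bv,v) \le c_R (A^{-1}v,v)$ and the lower bound $(A^{-1}v,v) \le c_S (Bv,v)$. Since $A$ is self-adjoint and positive definite, these bounds say that the spectrum of $BA$, which is similar to $A^{1/2} B A^{1/2}$, lies in $[c_S^{-1}, c_R]$. The condition number bound $\kappa(BA) \le c_R c_S$ then follows immediately.

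For the upper bound, fix $v$ and set $\widehat w = \widehat{A}^{-1} R^\tr v$. Then
\[
(Bv,v) = (\widehat{A}^{-1}R^\tr v, R^\tr v) = (R\widehat w, v).
\]
Using the $A$-inner product and Cauchy--Schwarz,
\[
(R\widehat w, v) = (R\widehat w, A^{-1}v)_A \le \|R\widehat w\|_A \|A^{-1}v\|_A .
\]
By \eqref{eq:cR}, $\|R\widehat w\|_A \le c_R^{1/2}\|\widehat w\|_{\widehat A}$. Moreover $\|\widehat w\|_{\widehat A}^2 = (\widehat A\widehat w,\widehat w) = (R^\tr v,\widehat w) = (Bv,v)$ and $\|A^{-1}v\|_A^2 = (A^{-1}v,v)$. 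Combining these gives $(Bv,v) \le c_R^{1/2}(Bv,v)^{1/2}(A^{-1}v,v)^{1/2}$. Dividing by $(Bv,v)^{1/2}$, with the trivial case $(Bv,v)=0$ handled separately, and squaring yields the upper bound.

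For the lower bound, I would use the stable decomposition. Apply \eqref{eq:cS} to $u = A^{-1}v$ to obtain $\widehat u$ with $R\widehat u = u$ and $\|\widehat u\|_{\widehat A}^2 \le c_S\|u\|_A^2$. Then
\[
(A^{-1}v,v) = (u,v) = (R\widehat u, v) = (\widehat u, R^\tr v) = (\widehat u, \widehat A\widehat{A}^{-1}R^\tr v)_{\phantom{A}} .
\]
Cauchy--Schwarz in the $\widehat A$-inner product bounds the last expression by $\|\widehat u\|_{\widehat A}(Bv,v)^{1/2}$. Inserting the bound on $\|\widehat u\|_{\widehat A}$ gives
\[
(A^{-1}v,v) \le c_S^{1/2}\|A^{-1}v\|_A (Bv,v)^{1/2} = c_S^{1/2}(A^{-1}v,v)^{1/2}(Bv,v)^{1/2}.
\]
Dividing by $(A^{-1}v,v)^{1/2}$ and squaring yields the lower bound.

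The main point requiring care is the lower bound: the decomposition must be applied to $A^{-1}v$ rather than to $v$, and the adjoints have to be tracked consistently. I also assume $A$ and $\widehat A$ are positive definite, so that $\widehat A^{-1}$ exists, as is implicit in the statement. Surjectivity of $R$ is already encoded in the existence of the decomposition in \eqref{eq:cS}.
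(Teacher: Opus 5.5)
Your proof is correct and is the standard argument for Nepomnyaschikh's fictitious space lemma; the paper gives no proof and only cites the lemma. Your steps are Cauchy--Schwarz in the $A$-inner product with $\widehat w = \widehat{A}^{-1}R^\tr v$ for the upper bound, the stable decomposition applied to $A^{-1}v$ with Cauchy--Schwarz in the $\widehat A$-inner product for the lower bound, and the Rayleigh-quotient argument for $A^{1/2}BA^{1/2}$ to get $\kappa(BA)\le c_R c_S$. You also correctly read the decomposition hypothesis as holding for all $v \in V$ (the ``$v \in \Vh$'' in the statement is a typo), and you made explicit the positive-definiteness of $A$ and $\widehat A$ that the statement leaves implicit.
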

In this context, let $V$ denote the standard $\Pp_N$ Lagrange space, and let $\widehat{V} = V_h$ be the Duffy-mapped space defined by \eqref{eq:Vh}.
For $\widehat{v} \in \widehat{V}$, we define $v = R\widehat{v}$ to be the local elliptic projection of $\widehat{v}$, i.e.\ the unique element of $V$ satisfying, for each triangle $\kk \in \T$, $v = \widehat{v}$ on $\partial\kk$, and,
\[
   a(v,w)_{\kk} = a(\widehat{v}, w)_{\kk},
\]
for all test functions $w \in \Pp_{N,0}(\kk) = \{ w \in \Pp_N : w|_{\partial\kk} = 0 \}$ .
This mapping is well-defined since the traces of $\widehat{v}$ are polynomials of degree at most $N$ by \Cref{prop:h1-trace}.
Since $V \subseteq \widehat{V}$, the constant $c_S$ in \eqref{eq:cS} is equal to 1.
The constant $c_R$ in \eqref{eq:cR} is bounded, independent of $N$, by the existence of polynomial-preserving inverse trace operators \cite{Ainsworth2009}.
The fictitious system solver $\widehat{A}^{-1}$ can be replaced by a suitable approximation, e.g.\ one V-cycle of algebraic multigrid applied to the low-order-refined system $A_0$, as considered in the previous section.

In this context, one application of the preconditioner requires one application of algebraic multigrid applied to the sparse low-order-refined system, and two applications of the transfer operator $R$, which has block-diagonal structure, with blocks of size $(N-1)(N-2)/2$ for each element interior.
The block size grows quadratically with the polynomial degree, and so at very high orders the application of $R$ may be the dominant cost in the preconditioner application.

\begin{table}
   \caption{
      Convergence results for the fictitious space preconditioner.
      $B$ denotes the preconditioner using the high-order space $\Vh$ as a fictitious space, and $B_0$ denotes using the low-order refined preconditioner $V_0$.
      Iteration counts using algebraic multigrid for the fictitious system are shown in parentheses.
   }
   \label{tab:aux}
   \centering

   \begin{tabular}{c c c C{6} C{3}}
      \multicolumn{5}{c}{NACA0012} \\
      \toprule
      {$N$} & {\# It. $B$} &  {\# It. $B_0$} & {\# DOFs} & {Block size} \\
      \midrule
      2 & 13 (18) & 15 (16) & 3920  & 0 \\
      4 & 13 (20) & 19 (23) & 15224  & 3 \\
      8 & 13 (22) & 20 (28) & 59984  & 21 \\
      16 & 13 (27) & 20 (32) & 238112  & 105 \\
      32 & 14 (27) & 19 (35) & 948800  & 465 \\
      \bottomrule
   \end{tabular}
   \hspace{1cm}
   \begin{tabular}{c c c C{6} C{3}}
      \multicolumn{5}{c}{BGM45-15 nozzle} \\
      \toprule
      {$N$} & {\# It. $B$} &  {\# It. $B_0$} & {\# DOFs} & {Block size} \\
      \midrule
      2 & 18 (20) & 31 (32) & 1257 & 0 \\
      4 & 16 (19) & 25 (28) & 4881 & 3 \\
      8 & 15 (19) & 28 (32) & 19233 & 21 \\
      16 & 15 (26) & 29 (34) & 76353 & 105 \\
      32 & 16 (28) & 30 (37) & 304257 & 465 \\
      \bottomrule
   \end{tabular}
\end{table}

We repeat the degree-refinement studies of the previous section on the BGM45-15 and NACA0012 meshes, solving the standard Lagrange $\Pp_N$ system using the fictitious space preconditioner.
The results are shown in \Cref{tab:aux}.
These results confirm that the fictitious space preconditioner is robust with respect to polynomial degree.
The practical preconditioner, using algebraic multigrid applied to the low-order-refined system $A_0$, results in fewer than 40 conjugate gradient iterations in all cases considered.

\section{Conclusions}
\label{sec:conclusions}

In this work, we constructed high-order finite element spaces for the $L^2$ de Rham complex by pulling back appropriately chosen polynomial spaces on the unit square under the Duffy transformation.
Unisolvent degrees of freedom for these spaces are constructed using a collapsed lattice.
In the basis dual to these degrees of freedom, the discrete differential operators have a particularly simple matrix representation: they are equal to the oriented incidence matrix of the lattice.
The triangular lattice can also be used to construct a refined mesh, on which the lowest-order finite element spaces share the same degrees of freedom as the high-order space on the original mesh.
By establishing one-dimensional norm equivalences and stability estimates for the Gauss--Lobatto interpolation operator in Jacobi-weighted norms, we prove $N$-independent spectral equivalence of the high-order and low-order refined stiffness matrices.
Numerical results confirm these estimates on a variety of unstructured meshes.
Degree-robust preconditioners for the standard $\Pp_N$ finite element spaces can be constructed using the low-order-refined spaces through a fictitious approach.
The fictitious space preconditioner requires solving decoupled elliptic problems on the interior of each triangle; the development of efficient matrix-free methods for these local problems is an open problem of interest.
The extension of these approaches to three-dimensional meshes is a topic of future investigation.

\section*{Acknowledgements}

This work was partially supported by the U.S. Department of Energy, Office of Science, Office of Advanced Scientific Computing Research (ASCR) and NSF DMS-2136228.
Computational resources used in this work were provided by the Oregon Regional Computing Accelerator (Orca), funded by NSF CC*-2346732.
The author thanks V.~Dobrev, J.~Dong, and Tz.~Kolev for insightful discussions and comments.

\printbibliography

\end{document}